\def\var{\text{FV}}
\def\FV{\text{FV}}
\def\obL{\text{ob}\L}
\def\Lrg{\mathcal{L}_{\text{rg}}}
\def\T{\mathbb{T}}
\def\C{\mathcal{C}}
\def\D{\mathcal{D}}
\def\L{\mathcal{L}}
\def\M{\mathcal{M}}
\def\Lcat{\mathcal{L}_{\text{cat}}}
\def\Lucat{\mathcal{L}_{\text{ucat}}}
\def\Lgraph{\mathcal{L}_{\text{graph}}}
\def\Lprecat{\mathcal{L}_{\text{precat}}}
\def\Tprecat{\mathbb{T}_{\text{precat}}}
\def\Tcat{\mathbb{T}_{\text{cat}}}
\def\Eqintro{(\rsort)}
\def\Jrule{\text{(J)}}
\def\wJrule{\text{(wJ)}}
\def\trrule{(\tsort\rsort)}
\def\DFOLDS{\D_{\text{FOLDS}}}
\def\DFOLDScl{\D_{\text{FOLDS}}^{\text{cl}}}
\def\Diso{\D_{\cong}}
\def\Disocl{\D_{\cong}^{\text{cl}}}
\def\Tgpd{\T_{\text{gpd}}}
\def\FOLDS{\textbf{FOLDS}}
\newcommand{\proptrunc}[1]{\vert\vert #1 \vert\vert}
\newcommand\frakfamily{\usefont{U}{yfrak}{m}{n}}
\DeclareTextFontCommand{\textfrak}{\frakfamily}
\theoremstyle{plain}\newtheorem{lemma}{Lemma}[section]
\theoremstyle{plain}\newtheorem{cor}[lemma]{Corollary}
\theoremstyle{plain}\newtheorem{theo}[lemma]{Theorem}
\theoremstyle{plain}
\theoremstyle{definition}\newtheorem{defin}[lemma]{Definition}
\theoremstyle{remark} \newtheorem{exam}[lemma]{Example}
\theoremstyle{remark} \newtheorem{remark}[lemma]{Remark}
\theoremstyle{plain}\newtheorem{prop}[lemma]{Proposition}
\theoremstyle{plain}\newtheorem{problem}[lemma]{Problem}
\theoremstyle{definition}\newtheorem{construction}[lemma]{Construction}
\theoremstyle{definition}\newtheorem*{notation}{Notation}
\theoremstyle{definition}\newtheorem*{terminology}{Terminology}
\theoremstyle{definition}\newtheorem*{convention}{Convention}
\author{Dimitris Tsementzis}
\title{First-Order Logic with Isomorphism}
\begin{document}

\date{\today}
\keywords{Univalent Foundations, Categorical Logic, Homotopy Type Theory}
\subjclass[2010]{03G99, 03B15, 03B22, 03C99}

\address{Rutgers University}
\curraddr{New Brunswick, NJ 08544, USA}
\email{dt506@stat.rutgers.edu}

\def\v{\:\vert\:}

\newcommand{\core}[1]{#1_g}
\newcommand{\op}[1]{#1^{\text{op}}}
\newcommand{\ob}[1]{\text{Ob}#1}
\newcommand{\morph}[1]{\text{Mor}#1}
\newcommand{\dom}[1]{\text{dom}(#1)}
\newcommand{\cod}[1]{\text{cod}(#1)}
\newcommand{\inv}[1]{#1^{-1}}
\newcommand{\isocat}[1]{\textbf{#1}_{\text{iso}}}
\newcommand{\slice}[2]{#1 / #2}
\newcommand{\coslice}[2]{#1 / #2}
\newcommand{\discrete}[1]{s(#1)}
\newcommand{\catelem}[1]{\int #1}
\newcommand{\Nattrans}[2]{\text{Nat}(#1,#2)}
\newcommand{\constfun}[1]{\hat{#1}}
\newcommand{\prodmap}[2]{\langle #1,#2 \rangle}
\newcommand{\coprodmap}[2]{\langle #1,#2 \rangle}
\newcommand{\Logical}[1]{\text{L}(#1)}

\newcommand{\fourpartdef}[8]
{
	\left\{
		\begin{array}{lll|}
			#1 & \mbox{if } #2 \\
			#3 & \mbox{if } #4 \\
			#5 & \mbox{if } #6 \\
			#7 & \mbox{if } #8 \\
		\end{array}
	\right.
}

\newcommand{\twopartdef}[4]
{
	\left\{
		\begin{array}{ll}
			#1 & \mbox{if } #2 \\
			#3 & \mbox{if } #4 
		\end{array}
	\right.
}

\newcommand{\threepartdef}[6]
{
	\left\{
		\begin{array}{lll}
			#1 & \mbox{if } #2 \\
			#3 & \mbox{if } #4 \\
			#5 & \mbox{if } #6 
		\end{array}
	\right.
}

\newcommand{\transport}[1]{#1^{\tau}}
\newcommand{\level}[1]{l(#1)}
\newcommand{\height}[1]{h(#1)}
\newcommand{\typesofhlevel}[1]{\mathcal{U}^{#1}}
\newcommand{\U}{\mathcal{U}}
\newcommand{\Pitype}[1]{\underset{#1}{\Pi}}
\newcommand{\Sigmatype}[1]{\underset{#1}{\Sigma}}
\newcommand{\Idtype}[3]{\mathtt{Id}_{#1}(#2,#3)}
\newcommand{\typed}[1]{\widehat{#1}}
\def\Idtype{\mathtt{Id}}
\def\Pitype{\Pi}
\def\Sigmatype{\Sigma}
\def\singletontype{\mathbf{1}}
\def\zerotype{\mathbf{0}}

\newcommand{\Pitypecom}[2]{\Pitype \: (#1) \:\: #2}
\newcommand{\Sigmatypecom}[2]{\Sigmatype \: (#1) \:\: #2}

\def\emptycontext{\varnothing}

\newcommand{\NL}[1]{\text{NL}(#1)}
\newcommand{\HStruc}[1]{\text{HStruc}(#1)}
\newcommand{\HStruce}[1]{\text{\emph{HStruc}}(#1)}
\newcommand{\truncated}[1]{\vert \vert #1 \vert \vert}

\newcommand{\SMod}[2]{\textbf{SatMod}_{#1}^{#2}}
\newcommand{\SMode}[2]{\textbf{\emph{SatMod}}_{#1}^{#2}}
\newcommand{\SStruc}[2]{\textbf{SatStruc}_{#1}^{#2}}
\newcommand{\Struc}[1]{\text{Struc}(#1)}
\newcommand{\Struce}[1]{\textbf{\emph{Struc}}_{#1}}
\newcommand{\Mod}[1]{\textbf{Mod} (#1)}
\newcommand{\Mode}[1]{\textbf{\emph{Mod}}(#1)}

\newcommand{\glob}[1]{#1^{=}}
\newcommand{\setdefinition}[2]{
\lbrace #1 \:\vert\: #2 \rbrace
}
\newcommand{\ts}[1]{#1^{\tau}}


\def\eqdef{=_{\text{df}}}


\def\setprod{\times}
\def\catid{1}
\def\catcomp{\circ}
\def\vertcomp{\cdot}
\def\horcomp{\circ}
\def\obC{\text{ob}\:\C}
\def\obD{\text{ob}\D}
\def\catiso{\cong}
\def\Cop{\C^{\text{op}}}
\def\IdC{\text{Id}_\C}
\def\IdD{\text{Id}_\D}
\def\Cg{\C_{\text{g}}}
\def\catprod{\times}
\def\E{\mathcal{E}}
\def\commacat{\downarrow}
\def\Nat{\text{Nat}}
\def\idnat{\iota}
\def\yoneda{\mathbf{y}}
\def\coyoneda{\mathbf{y}^{\text{op}}}
\def\existsunique{\exists !}

\newcommand{\card}[1]{\vert #1 \vert}

\def\idenrule{(\text{iden})}

\def\Ob{\text{Ob}}
\def\Mor{\text{Mor}}
\def\rsort{\rho}
\def\eqsort{\cong}
\def\tsort{\tau}
\def\rmor{r}
\newcommand{\tmorX}[1]{{#1}_1}
\newcommand{\tmorXX}[1]{{#1}_2}
\newcommand{\tmorpath}[1]{e_{#1}}
\def\Natinfty{\mathbb{N}_\infty}
\newcommand{\Heightof}[1]{H(#1)}
\def\Ltycat{\L_{\text{tycat}}}
\def\hSig{\textbf{hSig}}
\def\Gmonad{\mathcal{G}}
\def\Tmonad{\mathcal{T}}
\def\Fmonad{\mathcal{F}}
\def\FOLDSeq{\textbf{FOLDS}_=}
\def\FOLDSeqe{\textbf{\emph{FOLDS}}_=}
\def\FOLDSeqtrans{\text{FOLDS}_{=,\tau}}
\def\FOLiso{\text{FOL}_{\cong}}
\def\FOLisocat{\textbf{FOL}_{\cong}}
\newcommand{\depprod}[2]{\overset{#2}{\underset{#1}{\longrightarrow}}}
\def\refl{\mathtt{refl}}
\def\trans{\mathtt{trans}}
\newcommand{\ttype}[2]{T_{#1}}
\newcommand{\vars}[1]{\vert #1 \vert}
\newcommand{\smashcont}[2]{#1 * #2}
\newcommand{\inttype}[1]{\widehat{#1}}
\def\isnotid{\neq 1}
\def\obord{<_o}
\def\morord{<_m}
\def\fresh{\#}
\newcommand{\Fin}[1]{\textbf{Fin}(#1)}

\newcommand{\Lrghsignature}[3]{\xymatrix{
#3 &I \ar[d]^{i} \\
#2 &A \ar@/^/[d]^{d} \ar@/_/[d]_{c} \\
#1 &O
}}

\def\isacontext{\:\:\textbf{ok}}
\def\isatype{\:\:\textbf{Sort}}
\def\isatypee{\:\:\textbf{\emph{Sort}}}

\def\emptycon{(\text{con-}\varnothing)}
\def\conext{(\text{con-ext})}
\def\weakening{(\text{con-wk})}
\def\axrule{(\text{ax})}
\def\Kform{(K\text{-form})}

\newcommand\interp[2]{#1^{#2}}
\newcommand\interpretation[1]{\llbracket #1 \rrbracket}
\def\FOLDScontexts{\textbf{Con}_\L}
\def\MLTTcontexts{\textbf{Con}_{\text{MLTT}}}
\def\FOLDSsorts{\textbf{Sort}_\L}
\def\MLTTterms{\textbf{Term}_{\text{MLTT}}}
\def\FOLDSformulas{\textbf{Formulas}_\L}
\def\MLTTtypes{\textbf{Type}_{\text{MLTT}}}
\def\FOLDSjudgments{\textbf{Jud}_\L}
\def\MLTTjudgments{\textbf{Seq}_{\text{MLTT}}}
\def\FOLDSvariables{\textbf{Var}}
\def\TTisatype{\:\mathbf{Type}}

\def\conmorphism{\Rightarrow}

\def\onetype{\mathbf{1}}
\def\zerotype{\mathbf{0}}
\newcommand\TTElapp[1]{\mathtt{El}( \mathtt{app} [#1])}
\def\LTT{LTT}
\def\TTapp{\mathtt{app}}
\def\ttapp{\mathtt{app}}
\def\TTproj{\mathtt{proj}}
\def\Elements{\mathtt{El}}





\def\eqgap{.4ex}
\def\overgap{.4ex}
\def\inferrulerule{.2pt}

\newlength\rulelength
\newlength\toplength
\newlength\bottomlength

\newcommand\myinferrule[2]{%
  \stackMath%
  \setlength\bottomlength{\widthof{$#1$}}%
  \setlength\toplength{\widthof{$#2$}}%
  \ifdim\toplength>\bottomlength%
    \setlength\rulelength{\the\toplength}%
  \else%
    \setlength\rulelength{\the\bottomlength}%
  \fi%
  \mathrel{%
    \stackunder[\overgap]{%
      \stackon[\overgap]{%
        \stackanchor[\eqgap]%
          {\rule{\the\rulelength}{\inferrulerule}}%
        {\rule{\the\rulelength}{\inferrulerule}}%
      }{#2}%
    }{#1}%
  }%
}

\def\FOLeq{\text{FOL}_{=}}

\begin{abstract}
The Univalent Foundations requires a logic that allows us to define structures on homotopy types, similar to how first-order logic with equality ($\FOLeq$) allows us to define structures on sets.
We develop the syntax, semantics and deductive system for such a logic, which we call first-order logic with isomorphism ($\FOLiso$).
The syntax of $\FOLiso$ extends $\FOLeq$ in two ways. 
First, by incorporating into its signatures a notion of dependent sorts along the lines of Makkai's FOLDS as well as a notion of an $h$-level of each sort.
Second, by specifying three new logical sorts within these signatures: isomorphism sorts, reflexivity predicates and transport structure.
The semantics for $\FOLiso$ is then defined in homotopy type theory with the isomorphism sorts interpreted as identity types, reflexivity predicates as relations picking out the trivial path, and transport structure as transport along a path.
We then define a deductive system $\Diso$ for $\FOLiso$ that encodes the sense in which the inhabitants of isomorphism sorts really do behave like isomorphisms and
 prove soundness of the rules of $\Diso$ with respect to its homotopy semantics. 
Finally, as an application, we prove that precategories, strict categories and univalent categories are axiomatizable in $\FOLiso$.
\end{abstract}

\maketitle

\tableofcontents

\section*{Introduction}

The Univalent Foundations of Mathematics (UF) \cite{HTT} take their basic objects to be homotopy types.
In UF mathematical structures are therefore encoded as structured homotopy types, similar to how in set-theoretic foundations they are encoded as structured sets.
This basic picture allows us to envision a model theory in which mathematical objects are no longer described as structured sets, but rather as structured homotopy types.

In this paper we develop a logic through which this process can be carried out.
We will call it first-order logic with isomorphism ($\FOLiso$)
in order to reflect the following fundamental idea of UF:
the primitive notion of equality in UF is itself a \emph{structure} that behaves like the structure of isomorphisms between structures, rather than the \emph{fact} of identity between sets.

The syntax of $\FOLiso$ will be based on an extension of the syntax of Makkai's FOLDS (First-Order Logic with Dependent Sorts) \cite{MFOLDS}. 
The signatures of FOLDS can be understood as inverse categories where the arrows encode variable dependencies between the objects (understood as ``sorts'').
The key addition of $\FOLiso$ to FOLDS is to add a notion of an ``$h$-level'' to each sort as well as notions of ``isomorphism sorts'', ``reflexivity predicates'' and ``transport structure'' understood as \emph{logical} sorts (i.e. with a fixed denotation) defined on top of the non-logical sorts in accordance with the latter's $h$-level. 
It is helpful to think of the syntax of $\FOLiso$ as standing to FOLDS in the same relation that  the syntax of first-order logic with equality ($\FOLeq$) stands to plain first-order logic.
To add equality to a first-order signature $\Sigma$, one simply adds a binary relation with a certain fixed denotation.
The analogous process for a FOLDS signature $\L$ is carried out in terms of an
 ``isomorphism completion'' operation on inverse categories which attaches logical sorts to pre-existing sorts in $\L$ in a manner compatible with their $h$-level.

The intended semantics for $\FOLiso$ is in UF, where in particular ``isomorphism sorts'' are interpreted as path spaces (if one thinks in terms of abstract homotopy theory) or identity types (if one thinks in terms of homotopy type theory).
The basic idea of the ``homotopy semantics'' of $\FOLiso$ is the following: non-logical sorts of $h$-level $m$ are interpreted as (dependent functions landing in) types of $h$-level $m$ and we make this precise by interpreting $\FOLiso$ into a version of ``book HoTT'', i.e. the formal system outlined in \cite{HTT}.
The isomorphism sorts that have been added to the syntax of $\FOLiso$ are then interpreted as identity types, the reflexivity predicates are interpreted as predicates picking out $\refl$ and transport structure is interpreted as (a relational version of) the transport function $\trans$ induced by a given path.

Finally, we present a deductive system $\Diso$ on the syntax of $\FOLiso$ that can justify its homotopy semantics.
It is based on a standard sequent calculus for first-order logic to which we add three rules:
a rule asserting the existence of the ``reflexivity'' isomorphisms, a rule specifying that ``transport along reflexivity'' does nothing and, crucially, a propositional version of the J-rule of MLTT. 
We can then prove that the deductive system is sound with respect to the above-sketched semantics, which is the main result of this paper. 


\vspace{0.3cm}

\textbf{Related Work.} Our work connects to several strands of ongoing work in UF.
The connection between inverse categories and homotopy theory is well-known and has been developed extensively by Shulman in \cite{shulman2015reedy, ShulmanUID} and our work builds on this connection, albeit from a more syntactic viewpoint, rather than within the categorical semantics of HoTT.
Our framework also provides a general definition of a (logical) signature for UF and could thus be used to generalize the Structure Identity Principle of \cite{HTT} as indeed has been sketched in \cite{uniFOLDS, TsemHSIP}.
Formalizations of category theory in the style of FOLDS has also been carried out by Ahrens in UniMath \cite{UniMath} and his formalization overlaps with some of the material of Section \ref{nlogExamples}.
Furthermore, our system $\FOLiso$ (as well as FOLDS) can be thought of as a dependently-typed first-order logic with specific features that make it suitable for UF, and in that regard it connects to the work of Palmgren \cite{PalmgrenFOLDS}.
Finally, although we do not (yet) adopt this point of view, $\FOLiso$ can be thought of as a proposal to add a ``logic layer'' to HoTT along the lines of so-called logic-enriched type theory \cite{GambinoAczel} and therefore is connected to the work of \cite{LuoPart}.

\vspace{0.3cm}

\textbf{Outline of the Paper.} In Section \ref{prelim} we introduce the syntax of FOLDS as well as a proof system $\DFOLDS$ for this syntax and state some preliminary results. In Section \ref{FOLDSinMLTT} we define an interpretation of the syntax of FOLDS into MLTT via a direct interpretation of the raw syntax of the former system into the raw syntax of the latter. In Section \ref{nlogSyntax} we build on the syntax of FOLDS to define the syntax of $\FOLiso$. In Section \ref{nlogSemantics} we define the homotopy semantics of $\FOLiso$ by defining an interpretation of the syntax of $\FOLiso$ into HoTT, building on the interpretation of FOLDS into MLTT.
In Section \ref{nlogicProof} we define a proof system $\Diso$ for $\FOLiso$ and prove soundness for $\Diso$ with respect to our homotopy semantics. 
Finally, in Section \ref{nlogExamples} we show how precategories, strict categories and univalent categories can be axiomatized in terms of $\FOLiso$.

\section{Preliminaries on FOLDS}\label{prelim}

We will assume familiarity with the basics of categorical logic as well as of intensional Martin-L\"{o}f Type Theory and its homotopy interpretation. For category-theoretic background \cite{CWM} remains the standard reference; for the basics of dependent type theory we recommend \cite{Lof84, Hof97}; for the homotopy interpretation and an introduction to the Univalent Foundations see \cite{HTT, KLV12} and references therein.
We will now present in more detail the basic syntax of FOLDS inspired by Makkai's original presentation in \cite{MFOLDS}. 

\def\nonidentity{\rightarrow}
\newcommand\toplevel[1]{\text{top}(#1)}

\begin{defin}[Inverse Category]\label{inversecategory}
An \textbf{inverse category} is a category $\L$ such that
\begin{enumerate}
\item $\L$ has no non-identity endomorphisms
\item $\L$ is skeletal 
\item For any object $K$ there is a finite number of arrows with domain $K$
\end{enumerate} 
\end{defin}

\begin{remark}
An inverse category is so-named because it can be thought of as a category in which all arrows ``go in one direction'' and in particular a category in which all arrows ``point downwards''. Condition (3) in Definition \ref{inversecategory} (which Makkai called ``finite fan-out'') ensures that every object in an inverse category is only a ``finite path'' away from an object at the ``bottom''. This allows us to carry out inductive definitions on inverse categories as indeed we will be doing throughout this paper.
\end{remark}

%
%
%

\newcommand{\pcosieve}[2]{#1//#2}

\begin{notation}
We write $\Natinfty$ for the set $\mathbb{N} \cup \lbrace \infty \rbrace$.
We will sometimes write $K \in \L$ as an abbreviation for $K \in \ob{\L}$ and we write $\L(K,K')$ for the morphisms from $K$ to $K'$ in $\L$. 
For any arrow $f \colon K \rightarrow K'$ in an inverse category $\L$ we will write $K_f$ for its codomain $K'$.
For any object $K \in \ob{\L}$ we write $\pcosieve{K}{\L}$ for the set (indeed, cosieve) of non-identity arrows with domain $K$.
The objects of any inverse category $\L$ can be stratified into levels, defined (inductively) as follows:
\[
l(K) = \left\{
	\begin{array}{ll}
		0  & \mbox{if $K$ is the domain only of $1_K$} \\
		\underset{f \in \pcosieve{K}{\L}}{\text{sup}}l(K_f)+1 & \mbox{otherwise} 
	\end{array}
\right.
\]
\end{notation}

\begin{terminology}
We will call $l(K)$ the \emph{level} of $K$. 
We will generally refer to the objects of $\L$ as \emph{sorts}. 
We call a non-identity arrow $f \colon K \rightarrow A$ \emph{top-level (for $K$)} if it does not factor through another arrow, i.e. if there is no non-identity arrows $h,k \in \morph{\L}$ such that $f=hk$ and we write $\toplevel{K}$ for the set of top-level arrows of a sort $K$.
\end{terminology}

\begin{defin}\label{properorder}
A \textbf{proper order} $<$ on an inverse category is a pair $(\obord,\morord)$ where:
\begin{itemize}
\item $\obord$ is a partial order on $\ob{\L}$ such that if there is a non-identity arrow $f \colon K \nonidentity K_f$ then $K_f \obord K$
\item $\morord$ is a partial order on $\morph{\L}$ such that $f \morord g$ if $\cod{f} \obord \cod{g}$ 
 \end{itemize}
\end{defin}

\begin{notation}
For a proper order on $\L$ we will write simply $K<K'$ and $f<g$ for the ordering on objects and morphisms respectively. For every ordered tuple that is indexed by a set of arrows in $\L$ we will from now on asume that it is indexed in accordance with a given proper order on $\L$. For example, $(pf)_{f \in \pcosieve{K_f}{\L}}$ stands for the tuple $(p_1f,\dots,p_nf)$ where $p_1 < \dots <p_n$ are the morphisms of $\pcosieve{K_f}{\L}$ (i.e. all the non-identity arrows ``out of'' $K_f$). When the domain of a certain list of arrows is obvious we will denote simply by $(\mathbf{p}f)$ the $n$-tuple $(p_1f,\dots,p_nf)$. This convention will be used extensively below, especially when we have other expressions indexed by arrows of $\L$, e.g. we will write $(x_{\mathbf{p}f})$ for the list $(x_{p_1f},\dots,x_{p_mf})$.
\end{notation}

It is important for our purposes to define inverse categories over certain fixed sets of symbols for objects and arrows, since these symbols will be used to extract a formal syntax out of such inverse categories below. So now let $O,M$ be arbitrary disjoint countably infinite sets.

\begin{defin}[Inverse category over $(O,M)$]\label{inversecategoryOM}
An \textbf{inverse category over $(O,M)$} (or \textbf{$(O,M)$-inverse category}) is an inverse category $\L$ such that $\ob{\L} \subset O$, $\morph{\L} \subset M \amalg \setdefinition{1_K}{K \in O}$ and the identity on an object $K \in \ob{\L}$ is given by the symbol $1_K$.
\end{defin}

\begin{defin}[FOLDS signature over $(O,M)$]\label{FOLDSsignature}
A \textbf{FOLDS signature over $(O,M)$} (or $(O,M)$\textbf{-FOLDS-signature}) is a pair $(\L,<)$ where $\L$ is an $(O,M)$-inverse category and $<$ is a proper order on $\L$.
We write $\FOLDS(O,M)$ for the category whose objects are the FOLDS signatures over $(O,M)$ and whose arrows are the order-preserving functors between them, i.e. an arrow $I \colon (\L,<) \rightarrow (\L',<')$ is given by a functor $I \colon \L \rightarrow \L'$ such that $K < K'$ implies $I(K) < I(K')$ and $f<f'$ implies $I(f)<I(f')$.
\end{defin}

\begin{terminology}
Once fixed we will generally leave the choice of $(O,M)$ implicit and speak simply of \emph{FOLDS signatures} and write simply $\FOLDS$ for the associated category. We refer to morphisms in $\FOLDS$ as \emph{FOLDS morphisms}. We also write $\FOLDS_i$ for the important subcategory of $\FOLDS$ that consists of those FOLDS morphisms $I \colon \L \rightarrow \L'$ that are full, faithful and \emph{level-preserving} in the sense that $l(K)=l(I(K))$. We say that a FOLDS signature is \emph{finite} if its underlying inverse category is finite.
\end{terminology}

\begin{remark}
The importance of the morphisms of $\FOLDS_i$ is that they define interpretations of the syntax associated to a FOLDS signature, as we will see below.
\end{remark}

\begin{exam}
Let $\Lgraph$ denote the following FOLDS signature: 
\[
\xymatrix{
A \ar@/_/[d]_{d} \ar@/^/[d]^{c}\\
O
}
\]
As the name suggests this would be the signature useful to talk about graphs, where $A$ would encode the ``edges'' between previously declared ``vertices'' of sort $O$, $d$ the map picking out the ``domain'' of an arrow and $c$ the map picking out the ``codomain'' of an arrow.
A proper order on $\Lgraph$ would be given by $O<A$, $c<d$. Picking $d<c$ instead would give an isomorphic FOLDS signature.
\end{exam}

\begin{exam}
Let $\Lrg$ denote the following FOLDS signature:
\[
\xymatrix{
I \ar[d]^{i} \\
A \ar@/^/[d]^{d} \ar@/_/[d]_{c} \\
O
}
\]
subject to the relation $di=ci$. Intuitively, this corresponds to the signature for reflexive graphs, where $I$ is a unary predicate that can only be ``asked'' of an ``arrow'' in $A$ that we already know is a loop. $\Lrg$ will serve as a running example throughout this paper.
A proper order on $\Lrg$ would be given by $O<A<I$, $c<d<di<i$. Picking $d<c$ instead would once again give an isomorphic FOLDS signature.
\end{exam}


FOLDS signatures will play the role of logical signatures from which a formal syntax can be extracted.
The basic idea is that the objects of $\L$ encode sorts and the arrows in $\L$ encode sort dependencies.
We make this idea precise in the definitions that follow, in which we define certain simple dependent type theories whose rules and syntax are determined by the (assumed given) structure of $\L$.


\begin{notation}
We fix a countably infinite set of \emph{variables} $V$ together with a \text{fresh variable provider} $\fresh$ that takes any finite subset $S \subset V$ to a variable $y \notin S$. For any subset $S \subset V$ we write $x \fresh S$ to indicate that $x \notin S$. We will also be indexing lists of variables by arrows of $\L$ so it is worth making clear that the notation $(x_f)_{f \in \pcosieve{K}{\L}}$ refers to a list of variables of length equal to the cardinality of $\pcosieve{K}{\L}$ ordered in accordance with the order on $\L$. For example, in $\Lrg$ we have $(x_f)_{f \in \pcosieve{I}{\L}} = (x_{di}, x_i)$. We denote, when necessary, the empty list of variables by $\epsilon$ (e.g. when $\pcosieve{K}{\L} = \varnothing$).
\end{notation}


\begin{defin}[Contexts, Sorts and Context Morphisms]
Let $\L$ be a FOLDS signature.
We define the \textbf{syntactic type theory $TT_\L$ of $\L$} as follows.
The syntax of $TT_\L$ is the following:
\begin{align*}
\FOLDSsorts \:\:\:K, K', ... &::= A (x_f)_{f \in \pcosieve{A}{\L}} \:\:\: (A \in \ob{\L}) \\
\FOLDSvariables \:\:\:\:\:\:\:\:\:\:\:x,y, ... &::= V \\
\FOLDScontexts \:\:\:\:\:\: \Gamma, \Delta, ... &::= \varnothing \:\:\vert\:\: \Gamma, x \colon K
\end{align*}
We write $\vars{\Gamma}$ for the set of variables that appear in a context $\Gamma$.

The judgments of $TT_\L$ are the following:
\begin{itemize}
\item $\Gamma \isacontext$
\item $\Gamma \vdash K \isatype$
\item $\Gamma \vdash x \colon K$
\end{itemize}
We write $\FOLDSjudgments$ for the judgment expressions of $TT_\L$.

The structural rules of $TT_\L$ are the following:
\[
\inferrule
{
\:
}
{
\varnothing \isacontext
}
\:\:\emptycon
\quad
\quad
\inferrule
{
\Gamma \isacontext \\ \Gamma \vdash K \isatype 
}
{
\Gamma, x \colon K \isacontext
}
\:\:\conext,  x \fresh \vars{\Gamma} \quad\quad
\]
\[
\inferrule
{
\Gamma,\Delta \isacontext \\ \Gamma \vdash K \isatype
}
{
\Gamma,\Delta \vdash K \isatype
}
\:\:\weakening
\quad
\quad
\inferrule
{
\Gamma, x \colon K, \Delta \isacontext 
}
{
\Gamma, x \colon K, \Delta \vdash x \colon K
}
\:\:\axrule
\]
For every $K \in \ob{\L}$ we have the following formation rule
\[
\inferrule
{
\big( \Gamma \vdash x_f \colon K_f (x_{\mathbf{p}f}) \big)_{f \in \pcosieve{K}{\L}}
}
{
\Gamma \vdash K (x_f)_{f \in \pcosieve{K}{\L}} \isatype
}
\:\:\Kform
\]
with the understanding that if $\pcosieve{K}{\L}  = \varnothing$ there is nothing above the line except $\Gamma \isacontext$ (which means that objects of level $0$ in $\L$ are types in the empty context), and the notation 
\[
\big( \Gamma \vdash x_f \colon K_f (x_{\mathbf{p}f}) \big)_{f \in \pcosieve{K}{\L}}
\]
indicates the list of judgments
\[
\Gamma \vdash x_{f_1} \colon K_{f_1} (x_{\mathbf{p}f_1}) \quad \dots \quad \Gamma \vdash x_{f_n} \colon K_{f_n} (x_{\mathbf{p}f_n})
\]
where $f_1, \dots, f_n$ are all the arrows in $\pcosieve{K}{\L}$.

A \textbf{context} of $\L$ (or $\L$\textbf{-context}) is a well-formed context in $TT_\L$, i.e. a context $\Gamma$ such that $\Gamma \isacontext$ is derivable in $TT_\L$.

A \textbf{sort} of $\L$ (or $\L$\textbf{-sort}) is a well-formed sort in $TT_\L$, i.e. a sort $K$ such that $\Gamma \vdash K \isatype$ is derivable in $TT_\L$.

Let $\Gamma$ and $\Delta = x_1 \colon K_1, \dots, x_n : K_n (x_1, \dots x_{n-1})$ be $\L$-contexts.
A well-formed \textbf{context morphism} $\Gamma \Rightarrow \Delta$ in $\L$ is given by the following derivable judgments
\begin{align*}
\Gamma &\vdash y_1 \colon K_1 \\
\Gamma &\vdash y_2 \colon K_2 [y_1/x_1] \\
 &\: \vdots \\
\Gamma &\vdash y_n \colon K_n [y_1/x_1,\dots,y_{n-1}/x_{n-1}]
\end{align*}
\end{defin}

\begin{notation}
We will usually write simply $O$ for a sort expression $O\:\epsilon$, i.e. for any sort $O$ in $\L$ of level $0$.
We will write $\alpha \colon \Gamma \conmorphism \Delta$ to indicate a context morphism, where $\alpha$ will be the list of variables $\mathbf{y}=(y_1,\dots,y_n)$ as in the above definition. 
Whenever convenient we will write simply $A(\mathbf{x})$ for $A(x_f)_{f \in \pcosieve{A}{\L}}$.
For a given sort $K = A (\mathbf{x}) $ with all variables appearing in the context $\Delta$ we will write $\alpha(K)$ for the sort obtained from $K$ by substituting all variable in $\mathbf{x}$ with those in $\mathbf{y}$. (That this operation preserves well-formedness of sorts is proved as Lemma \ref{subsortwf} below.)
We will use the notation $\equiv$ to refer to syntactic equality between expressions.
\end{notation}

\begin{remark}
What we call a ``judgment'' above is usually called a ``judgment-in-context'' or a ``sequent'' but we will call it simply a judgment in order to avoid clashing with the term ``sequent'' in Definition \ref{formandseq} below.
\end{remark}

\begin{remark}\label{remarkinfinitary}
Note that if $\L$ is infinite (i.e. has an infinite number of objects), then $TT_\L$ has an infinite number of rules (since there is a $\Kform$ rule for every $K$ in $\L$). However it is still possible that $TT_\L$ is equivalent to (in an appropriate sense) a finitely presentable (in an appropriate sense) type theory even when $\L$ is infinite. We will take up this important point in Proposition \ref{FOLisofp} below.
\end{remark}

\newcommand{\FOLDSsortseg}[1]{\textbf{Sort}_{#1}}
\newcommand{\Kformeg}[1]{(#1\text{-form})}

\begin{exam}\label{TTLrgexample}
Consider $TT_{\Lrg}$. It has the following (raw) sort expressions
\[
\FOLDSsortseg{\Lrg} = \lbrace O\:\epsilon \rbrace \cup \lbrace A (x,y) \: \vdash \: x,y \in V \rbrace
\]
Its (raw) contexts therefore are of the following form (where we write simply $O$ for $O\:\epsilon$):
\[
x \colon O, f \colon A(x,y), g \colon A(z,w)
\]
Now we would expect to be able to derive that $A(x,y)$ is a well-formed sort for any $x,y \colon O$. We outline how this derivation goes, starting with $\emptycon$:
\[
\inferrule
{
\varnothing \isacontext
}
{
\inferrule
{
\varnothing \vdash O \isatype
}
{
\inferrule
{
x \colon O \isacontext
}
{
\inferrule
{
x \colon O \vdash O \isatype
}
{
\inferrule
{
x \colon O, y \colon O \isacontext \\
}
{
\inferrule
{
x \colon O, y \colon O \vdash x \colon O \\ x \colon O, y \colon O \vdash y \colon O
}
{
x \colon O, y \colon O \vdash A(x,y) \isatype
}
\Kformeg{A}
}
\axrule
}
\conext
}
\Kformeg{O}
}
\conext
}
\Kformeg{O}
\]
Now, as an example of a context morphism in
$\Lrg$ let $\Delta = x \colon O, y \colon O, f \colon A (x,y)$ and let $\Gamma = z \colon O, g \colon A(z,z)$. Then we have a context morphism $\alpha = (z,z,g) \colon \Gamma \conmorphism \Delta$ given by the following judgments
\begin{align*}
\Gamma &\vdash z \colon O \\
\Gamma &\vdash z \colon O [z/x]\\
\Gamma &\vdash g \colon A(x,y)[z/x][z/y] 
\end{align*}
all of which are derivable since $O [z/x] \equiv O$ and $A(x,y)[z/x][z/y] \equiv A(z,z)$.
\end{exam}

\begin{lemma}\label{appearlemma}
If $\Gamma \vdash x \colon K$ is derivable in $TT_\L$ then $x \colon K$ appears in $\Gamma$.
\end{lemma}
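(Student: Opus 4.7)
The plan is to proceed by induction on the derivation of $\Gamma \vdash x \colon K$, though this can be streamlined into a case analysis on the final rule applied. Inspecting the six inference rules of $TT_\L$ listed above, I would observe that their conclusions fall into three syntactic shapes: the rules $\emptycon$ and $\conext$ conclude a context judgment of the form $\Delta \isacontext$; the rules $\weakening$ and $\Kform$ conclude a sort judgment of the form $\Delta \vdash K' \isatype$; and only the axiom rule $\axrule$ concludes a term judgment of the form $\Delta \vdash y \colon K'$. Hence the last rule applied in any derivation of $\Gamma \vdash x \colon K$ must be $\axrule$.

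The conclusion of $\axrule$ is, by inspection, of the form $\Gamma', x \colon K, \Delta' \vdash x \colon K$ where $\Gamma = \Gamma', x \colon K, \Delta'$. This exhibits $x \colon K$ as a declaration appearing in $\Gamma$, which is precisely the desired conclusion. Note that no induction hypothesis on the premises is actually needed: the shape of the conclusion of $\axrule$ is already strong enough to yield the result directly, so the argument reduces to ruling out the other rules on purely syntactic grounds.

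I do not anticipate any real obstacle here; the only subtlety worth flagging is making sure the list of rules is genuinely exhaustive. In particular, there is no weakening, substitution, or conversion rule for term judgments in $TT_\L$ as defined in this section (weakening is formulated only for sorts via $\weakening$, and substitution along context morphisms, although implicitly used in $\Kform$, is not a separate primitive rule producing term judgments). Once this is noted, the lemma follows immediately, and it will serve as the base case for subsequent structural lemmas such as the well-formedness of sorts under substitution referred to later in the paper.
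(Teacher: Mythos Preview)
Your proposal is correct and follows essentially the same approach as the paper: both argue that $\axrule$ is the only rule of $TT_\L$ whose conclusion is a term judgment, and hence any derivation of $\Gamma \vdash x \colon K$ must end in $\axrule$, which forces $\Gamma \equiv \Gamma', x \colon K, \Delta'$. Your version is simply more explicit in ruling out the other rules by syntactic shape and in noting that no induction hypothesis is needed.
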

\begin{proof}
The only way to derive a judgment of the form $\Gamma \vdash x \colon K$ in $TT_\L$ is through $\axrule$ which means that if such a judgment is derivable then $\Gamma \equiv \Gamma' , x \colon K, \Delta$ for some contexts $\Gamma', \Delta$ which means that $x \colon K$ appears in $\Gamma$, as required.
\end{proof}

\begin{lemma}\label{subsortwf}
If $\alpha \colon \Gamma \conmorphism \Delta$ is a context morphism and $\Delta \vdash K \isatypee$ is derivable then so is $\Gamma \vdash \alpha(K) \isatypee$. In other words, the following rule is admissible in $TT_\L$:
\[
\inferrule
{
\alpha \colon \Gamma \conmorphism \Delta \ \\ \Delta \vdash K \isatypee
}
{
\Gamma \vdash  \alpha(K) \isatypee
}
\quad (\hspace{-0.17cm}\isatypee\text{\emph{-sub}})
\]
\end{lemma}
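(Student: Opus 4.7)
The plan is to argue by induction on the derivation of $\Delta \vdash K \isatype$, in tandem with a companion substitution claim for term judgments: namely, if $\Delta \vdash x \colon K'$ is derivable then so is $\Gamma \vdash \alpha(x) \colon \alpha(K')$. An inspection of the rules of $TT_\L$ shows that the judgment $\Delta \vdash K \isatype$ can only be concluded either by $\Kform$ applied in the full context $\Delta$, or by $\weakening$ from a smaller context $\Delta_0 \subseteq \Delta$; likewise, the judgment $\Delta \vdash x \colon K'$ can only be concluded by $\axrule$.

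For the companion claim, I would use that $\Delta \vdash x \colon K'$ is concluded by $\axrule$, so by Lemma \ref{appearlemma}, $x \colon K'$ appears in $\Delta = x_1 \colon K_1, \dots, x_n \colon K_n$; hence $x \equiv x_i$ and $K' \equiv K_i$ for some $i$. The definition of the context morphism $\alpha$ directly supplies $\Gamma \vdash y_i \colon K_i[y_1/x_1,\dots,y_{i-1}/x_{i-1}]$. The key auxiliary observation is that every variable appearing in $K_i$ must be among $x_1,\dots,x_{i-1}$: indeed, deriving $x_1 \colon K_1, \dots, x_{i-1} \colon K_{i-1} \vdash K_i \isatype$ via $\Kform$ requires (by Lemma \ref{appearlemma}) that the arguments of $K_i$ already appear in this initial segment of $\Delta$. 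Therefore the substituted sort $K_i[y_1/x_1,\dots,y_{i-1}/x_{i-1}]$ coincides syntactically with $\alpha(K_i)$, yielding $\Gamma \vdash \alpha(x) \colon \alpha(K')$ as required.

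For the main claim, in the $\Kform$ case we have $K \equiv A(x_f)_{f \in \pcosieve{A}{\L}}$ together with premises $\Delta \vdash x_f \colon K_f(x_{\mathbf{p}f})$ for each $f \in \pcosieve{A}{\L}$. Applying the companion claim to each premise gives $\Gamma \vdash \alpha(x_f) \colon \alpha(K_f(x_{\mathbf{p}f}))$; and since substitution commutes with sort formation, $\alpha(K_f(x_{\mathbf{p}f})) \equiv K_f(\alpha(x_{\mathbf{p}f}))$. Reassembling these via $\Kform$ in $\Gamma$ yields $\Gamma \vdash A(\alpha(x_f))_{f \in \pcosieve{A}{\L}} \isatype$, which is precisely $\Gamma \vdash \alpha(K) \isatype$. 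In the $\weakening$ case, the restriction of $\alpha$ to $\Delta_0$ is still a context morphism $\Gamma \conmorphism \Delta_0$, and since $K$ mentions only variables of $\Delta_0$ the two substitutions agree, so the inductive hypothesis finishes the case.

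The main obstacle will be the careful bookkeeping of variable positions and establishing that the substitution $[y_1/x_1,\dots,y_{j-1}/x_{j-1}]$ furnished by the context morphism agrees with the global substitution $\alpha$ when applied to well-formed sorts. This scope invariant is really the only nontrivial content of the lemma; once it is pinned down, the rest of the argument amounts to a straightforward syntactic re-assembly of the $\Kform$ derivation in the new context.
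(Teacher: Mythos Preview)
Your proof is correct and rests on the same key ingredient as the paper's---Lemma~\ref{appearlemma} combined with a direct application of $\Kform$---but your packaging is considerably more elaborate than what the paper actually does. The paper's proof is a two-line argument: write $\alpha(K) \equiv A(\mathbf{x})$, observe via Lemma~\ref{appearlemma} that every variable in $\mathbf{x}$ appears in $\Gamma$ (because the context-morphism judgments $\Gamma \vdash y_i \colon K_i[\dots]$ are derivable, hence each $y_i$ with its type is in $\Gamma$), and conclude that the premises of $\Kform$ are all derivable in $\Gamma$. There is no induction on derivations, no separate companion claim for term judgments, and no explicit $\weakening$ case.

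Your induction and companion claim are not wrong, but they are overkill in $TT_\L$: since the only terms are variables and the only term rule is $\axrule$, your companion claim collapses immediately to exactly the observation the paper uses. Likewise the $\weakening$ case is redundant once you note that any derivation of $\Delta \vdash K \isatype$ can be normalized to a single use of $\Kform$ in $\Delta$ (the only other source of type judgments is weakening, which can be pushed upward). What your version does buy is a template that would scale to richer type theories with genuine term constructors---there your companion claim and induction would do real work---whereas the paper's argument exploits the extreme simplicity of $TT_\L$ to skip straight to the conclusion.
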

\begin{proof}
Let $\alpha (K) \equiv A (\mathbf{x})$. By Lemma \ref{appearlemma} every variable in $\mathbf{x}$ appears in $\Gamma$ and therefore the whole list of judgments needed to apply $\Kform$ will be derivable and therefore $\alpha(K)$ will be a sort in context $\Gamma$, as required.
\end{proof}

\def\isaformularule{\textbf{form}} 
\def\isaformula{\:\:\textbf{formula}} 
\def\isaformulae{\:\:\textbf{\emph{formula}}} 
\def\seqimplies{\Rightarrow}
\def\formwk{(\isaformularule\mathtt{-wk})}
\def\topform{(\top\mathtt{-form})}
\def\botform{(\bot\mathtt{-form})}
\def\starform{(*\mathtt{-form})}
\def\Qform{(Q\mathtt{-form})}

\begin{defin}[Formulas and Sequents]\label{formandseq}
We define the \textbf{logical type theory $LTT_\L$ of $\L$} as follows.
The syntax of $LLT_\L$ extends $TT_\L$ by adding a class of formulas:
\[
\FOLDSformulas \:\:\:\phi, \psi, ... ::= \bot \:\vert\: \top \:\vert\: \phi \wedge \psi \:\vert\: \phi \vee \psi \:\vert\: \phi \rightarrow \psi \:\vert\: \forall x \colon K. \phi \:\vert\: \exists x \colon K. \phi
\]
For any formula $\phi$ we write $\FV(\phi)$ for the set of variables that appear in $\phi$ unbound by $\exists$ or $\forall$, and we consider formulas only up to $\alpha$-equivalence (i.e. up to consistent renaming of their bound variables).

The judgments of $LTT_\L$ are those of $TT_\L$ together with:
\begin{itemize}
\item $\Gamma \vdash \phi \isaformula$
\end{itemize}
We once again write $\FOLDSjudgments$ for the judgment expressions of $\LTT_\L$.

The structural rules of $LTT_\L$ are those of $TT_\L$ together with:
\[
\inferrule
{
\Gamma, \Delta \isacontext \\ \Gamma \vdash \phi \isaformula
}
{
\Gamma, \Delta \vdash \phi \isaformula
}
\quad (\isaformularule\mathtt{-wk})
\]
The formation rules of $LTT_\L$ are those of $TT_\L$ together with:
\[
\inferrule
{
\Gamma \isacontext
}
{
\Gamma \vdash \bot \isaformula
}
(\bot\mathtt{-form})
\quad
\quad
\inferrule
{
\Gamma \isacontext
}
{
\Gamma \vdash \top \isaformula
}
(\top\mathtt{-form})
\]
\[
\inferrule
{
\Gamma \isacontext \\ \Gamma \vdash \phi \isaformula \\ \Gamma \vdash \psi \isaformula
}
{
\Gamma \vdash \phi * \psi \isaformula
}
(*\mathtt{-form})
\]
\[
\inferrule
{
\Gamma \isacontext \\ \Gamma, x \colon K \vdash \phi \isaformula \\
}
{
\Gamma \vdash Qx \colon K.\phi  \isaformula
}
(Q\mathtt{-form})
\]
where $*=\vee,\wedge,\rightarrow$ and $Q=\exists,\forall$.
We define $\phi$ to be an \textbf{$\L$-formula in context $\Gamma$} whenever $$\Gamma \vdash \phi  \isaformula$$ is derivable in $LTT_\L$.
A \textbf{sequent} is a syntactic entity of the form
\[
\Gamma \: \vert \: \phi \seqimplies \psi
\]
where both $\phi, \psi$ are $\L$-formulas in context $\Gamma$.
Given a context morphism $\alpha = (y_1,\dots,y_n) \colon \Gamma \Rightarrow \Delta$ and a $\L$-formula $\phi$ in context $\Delta$ we define the formula expression $\alpha(\phi)$ inductively as follows:
\begin{itemize}
\item $\alpha( \top) = \top$
\item  $\alpha(\bot) = \bot$
\item $\alpha (\phi * \psi) = \alpha (\phi) * \alpha (\psi)$,  for $*=\vee,\wedge,\rightarrow$
\item $\alpha (Qx \colon K.\phi) = Qx \colon \alpha(K). \alpha(\phi)$,  for $Q=\exists,\forall$.
\end{itemize}
\end{defin}

\begin{remark}
It is reasonable to wonder whether we can generate through an inductive process every FOLDS signature $\L$ and therefore generating all $\LTT_\L$ in parallel with that process. Indeed this can be done, as has been outlined in \cite{TsemWeavFIC} and (in a slighlty different form) in \cite{PalmgrenFOLDS}.
\end{remark}

\begin{lemma}
The following rule is admissible in $\LTT_\L$:
\[
\inferrule
{
\alpha \colon \Gamma \conmorphism \Delta \ \\ \Delta \vdash \phi \isaformulae
}
{
\Gamma \vdash  \alpha(\phi) \isaformulae
}
\quad (\hspace{-0.17cm}\isaformulae\mathtt{-sub})
\]
\end{lemma}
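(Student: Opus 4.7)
The plan is to proceed by induction on the structure of $\phi$ (equivalently, on the derivation of $\Delta \vdash \phi \isaformula$), using Lemma \ref{subsortwf} in the quantifier case. Note that $\alpha \colon \Gamma \conmorphism \Delta$ guarantees $\Gamma \isacontext$, which is the only prerequisite for the atomic formation rules.

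For the base cases $\phi \equiv \top$ and $\phi \equiv \bot$, by definition $\alpha(\phi) \equiv \phi$, so the conclusion is immediate from $(\top\text{-form})$ and $(\bot\text{-form})$ applied to $\Gamma \isacontext$. For the propositional cases $\phi \equiv \psi_1 * \psi_2$ with $* \in \{\vee, \wedge, \rightarrow\}$, inverting the last rule of the derivation yields $\Delta \vdash \psi_i \isaformula$ for $i=1,2$, and the inductive hypothesis gives $\Gamma \vdash \alpha(\psi_i) \isaformula$; applying $(*\text{-form})$ then produces $\Gamma \vdash \alpha(\psi_1) * \alpha(\psi_2) \isaformula$, which is exactly $\Gamma \vdash \alpha(\psi_1 * \psi_2) \isaformula$ by the definition of $\alpha(-)$.

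The main case, and the one where care is needed, is the quantifier case $\phi \equiv Qx \colon K.\psi$ with $Q \in \{\exists, \forall\}$. By $\alpha$-equivalence we may rename the bound variable so that $x \fresh \vars{\Gamma} \cup \vars{\Delta} \cup \mathrm{FV}(\alpha)$. Inverting $(Q\text{-form})$ gives $\Delta \vdash K \isatype$ and $\Delta, x \colon K \vdash \psi \isaformula$. Applying Lemma \ref{subsortwf} to $\alpha$ and $K$ yields $\Gamma \vdash \alpha(K) \isatype$, so by $\conext$ we obtain $\Gamma, x \colon \alpha(K) \isacontext$. Now I extend $\alpha = (y_1, \dots, y_n)$ to a context morphism $\alpha' = (y_1, \dots, y_n, x) \colon \Gamma, x \colon \alpha(K) \conmorphism \Delta, x \colon K$: the first $n$ judgments are derivable by weakening those witnessing $\alpha$, and the last judgment $\Gamma, x \colon \alpha(K) \vdash x \colon \alpha(K)$ holds by $\axrule$ (using that $\alpha(K)[y_1/x_1, \dots, y_n/x_n]$ is exactly what appears in the extended context by freshness of $x$). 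Applying the inductive hypothesis to $\alpha'$ and $\psi$ gives $\Gamma, x \colon \alpha(K) \vdash \alpha'(\psi) \isaformula$, and one last use of $(Q\text{-form})$ yields $\Gamma \vdash Qx \colon \alpha(K).\alpha'(\psi) \isaformula$, which matches $\alpha(\phi)$ by the inductive clause for quantifiers.

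The only genuine obstacle is bookkeeping in the quantifier case: we must invoke $\alpha$-equivalence to avoid variable capture and verify that extending $\alpha$ by the identity substitution on the newly bound variable really defines a context morphism into $\Delta, x \colon K$. Everything else is a routine application of the inductive hypothesis combined with the corresponding formation rule.
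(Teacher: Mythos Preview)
Your proof is correct and follows exactly the approach indicated by the paper, which simply says ``Straightforward by structural induction over formula expressions.'' You have supplied the details that the paper omits, including the careful handling of the quantifier case via $\alpha$-renaming and extension of the context morphism, which is precisely where the only non-trivial bookkeeping lies.
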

\begin{proof}
Straightforward by structural induction over formula expressions.
\end{proof}

\begin{notation}
We will usually write a $\L$-formula $\phi$ in context $\Gamma$ as $\Gamma.\phi$.
For formulas of the form $\exists x \colon K(\textbf{y}). \top$ we will use the shorter and more standard form $K(\textbf{y})$.
\end{notation}

\begin{exam}
In $\LTT_{\Lrg}$ we can derive as in Example \ref{TTLrgexample}
\[
x \colon O, y \colon O \vdash A(x,y) \isatype
\]
from which we get
\[
x \colon O, y \colon O, f \colon A(x,y) \isacontext
\]
By applying $(\top\mathtt{-form})$ we get
\[
x \colon O, y \colon O, f \colon A(x,y) \vdash \top \isaformula
\]
and then by applying $(\exists)$ we get
\[
x \colon O, y \colon O \vdash  \exists f \colon A(x,y). \top \isaformula
\]
which we will write, as suggested above, simply as $A(x,y)$ with the understanding of course that this formula is stating that \emph{there exists} an ``inhabitant'' of the sort $A(x,y)$.
This notation thus makes more sense in situations where the given sort is best understood as a predicate, as is the case with the ``identity predicate'' $I$ in $\Lrg$.
Similar to what we just did we have a well-formed formula $\exists \tau \colon I(f,x). \top$ (in context $x \colon O, f \colon A(x,x)$) which we will sugar as $I(f,x)$, and which can be understood as saying that there exists a witness to the fact that $f$ is an identity ``arrow'' on the ``object'' $x$.
\end{exam}

All the definitions so far have depended on a specific choice of a proper order on $\L$.
We would like to know that this choice is irrelevant.
The following definitions aim at making this precise by defining a notion of interpretation between type theories of a very general kind. The reader familiar and comfortable with notions such as derivation, admissibility etc. may skip the next few definitions (up to Corollary \ref{isoimpliesbiint}) which play no essential role in this paper other than to make precise the sense in which isomorphic FOLDS signatures give rise to equivalent (syntactic and logical) type theories.

\begin{defin}
A \textbf{ML type theory} is a triple $TT=(S, J, R)$ where:
\begin{itemize}
\item $S=(S_1,\dots,S_k)$ is a list of sets called \textbf{syntactic components} 
\item $J = (1,J_1, \dots, J_l)$ is a list of \textbf{judgments} where $1$ is a singleton and each $J_i = S_{\alpha(1)} \times \dots \times S_{\alpha(m_i)}$ with $1 \leq \alpha(j) \leq k$ for each $1 \leq j \leq m_i$ and $m_i \geq 1$.
\item $R = (R_i)_{i \in \mathbb{N}}$ is a (possibly infinite) list of \textbf{rules} where each $R_i$ is a (possibly partial) function
\[
J_{\beta(1)} \times \dots \times J_{\beta(r_i)} \rightarrow J_{n_i}
\]
with $1 \leq \beta(j), n_1 \leq l$ and $r_i \geq 0$ with the understanding that $R_i = 1 \rightarrow J_{n_i}$ if $r_i = 0$.
\end{itemize}
\end{defin}

\begin{exam}
For any FOLDS signature $\L$, $TT_\L$ is an ML type theory $(S_{\L}, J_\L, R_\L)$ where
\begin{itemize}
\item $S_\L = (\FOLDSsorts, \FOLDSvariables, \FOLDScontexts)$
\item $J_\L = (1, \FOLDScontexts, \FOLDScontexts\times\FOLDSsorts, \FOLDScontexts\times\FOLDSvariables\times\FOLDSsorts)$
\item $R_\L = (\emptycon, \conext, \weakening, \axrule) \amalg (\Kform_{K \in \L})$. As an example of how these rules can be defined as (partial) functions we have 
\[
\emptycon \colon 1 \rightarrow \FOLDScontexts
\]
defined by $\emptycon (*) = \varnothing$ (where $*$ is the unique element of $1$).
\end{itemize}
To obtain $\LTT_\L$ we add the syntactic class of $\FOLDSformulas$ to $S_\L$, a judgment of the form $\FOLDScontexts \times \FOLDSformulas$ to $J_\L$ and the logical rules to $R_\L$.
\end{exam}


\begin{defin}
Let $TT=(S,J,R)$ be a ML type theory and let $\sigma \in J_{\gamma(1)} \times \dots \times J_{\gamma(m)}$ (for some $1  \leq \gamma (i) \leq l$) and $\tau \in J_m$ for some $1 \leq m \leq l$.
A \textbf{derivation} in $TT$ is a list $D= (d_1,\dots,d_n) \in R^n$ such that 
\[
d_n \circ \dots \circ d_1 (\sigma) = \tau
\]
A function $F \colon J_{\beta(1)} \times \dots \times J_{\beta(r_i)} \rightarrow J_{n_i}$ is an \textbf{admissible rule} in $TT$ (or simply \textbf{admissible in $TT$}) if for every $\sigma \in J_{\beta(1)} \times \dots \times J_{\beta(r_i)}$ for which $F$ is defined there is a derivation in $TT$ from $\sigma$ to $F(\sigma)$.
\end{defin}

\begin{defin}
Let $TT=(S,J,R)$ and $TT' = (S', J', R')$ be ML type theories. An \textbf{interpretation $I \colon TT \rightarrow TT'$ of ML type theories} consists of
\begin{itemize}
\item A family of functions $(I_i \colon S_i \rightarrow S'_j)_{1 \leq i \leq k, 1 \leq i \leq k'}$ 
\end{itemize}
such that
\begin{itemize}
\item $I(R_1)$ is admissible in $TT'$
\item $I(R_n)$ is admissible in $TT'$ given that each of $I(R_1), \dots, I(R_{n-1})$ are also admissible
\end{itemize}
where if $R_i \colon J_{\beta(1)} \times \dots \times J_{\beta(r_i)} \rightarrow J_{n_i}$ we define $I(R_i) \colon I(J_{\beta(1)}) \times \dots \times I(J_{\beta(r_i)}) \rightarrow I(J_{n_i})$ by the obvious induced action whenever the domain is in the image of $I$ and undefined otherwise.
\end{defin}

\def\MLTT{\textbf{MLTT}}

\begin{defin}
There is an obvious \textbf{identity interpretation} $1_{TT}$ given by the identity function on $S$ and given two interpretations $I \colon TT \rightarrow TT'$ and $J \colon TT' \rightarrow TT''$ we get the \textbf{composite interpretation} $J \circ I$ defined by the composite function $J \circ I \colon S \rightarrow S''$. We thus obtain a category $\MLTT$ whose objects are ML type theories and morphisms are the interpretations.
We say that two ML type theories are \textbf{bi-interpretable} if they are isomorphic in $\MLTT$.
\end{defin}

\begin{problem}
For any morphism $I \colon \L \rightarrow \L'$ in $\FOLDS_i$ to construct an interpretation $I^* \colon \LTT_\L \rightarrow \LTT_{\L'}$
\end{problem}
\begin{construction}\label{functortointerpretation}
We define $I^* \colon Sorts_\L \rightarrow Sorts_{\L'}$ by 
\begin{equation}
A(x_f)_{f \in \pcosieve{A}{\L}} \mapsto I(A) (x_{I(f)})_{f \in \pcosieve{A}{\L}}
\end{equation}
which is well-defined since $I$ is assumed full and faithful which means that $\pcosieve{A}{\L}$ is bijective to $\pcosieve{I(A)}{\L'}$. 
For the variables and contexts we define the obvious maps.
Lastly, to see that $I^*$ does indeed define an interpretation we need to show that for each rule $R$ of $\LTT_\L$ we have that $I^*(R)$ is admissible in $\LTT_{\L'}$ which is entirely straightforward and left to the reader.
\end{construction}

\begin{prop}
The assignment $\L \mapsto \LTT_{\L}$ and $I \mapsto I^*$ defines a functor 
\[
\LTT \colon \FOLDS_i \rightarrow \MLTT
\]
\end{prop}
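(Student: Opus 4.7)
The plan is to verify the two functoriality axioms, since well-definedness of the assignments has already been settled: $\LTT_\L$ is an ML type theory (by the example immediately following the definition of ML type theories) and $I^*$ is an interpretation of ML type theories (by Construction \ref{functortointerpretation}). So all that remains is to check that $(1_\L)^* = 1_{\LTT_\L}$ and $(J \circ I)^* = J^* \circ I^*$ whenever $I \colon \L \to \L'$ and $J \colon \L' \to \L''$ are composable morphisms in $\FOLDS_i$.

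For the identity, observe that on sorts $(1_\L)^*$ sends $A(x_f)_{f \in \pcosieve{A}{\L}}$ to $1_\L(A)(x_{1_\L(f)})_{f \in \pcosieve{A}{\L}} = A(x_f)_{f \in \pcosieve{A}{\L}}$, and the induced actions on variables, contexts and formulas are likewise identities by construction (the action on formulas is determined by the action on sorts and variables via the inductive clauses of Definition \ref{formandseq}). Hence $(1_\L)^*$ agrees componentwise with the identity interpretation.

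For composition, I would proceed by structural induction across the syntactic components $\FOLDSsorts$, $\FOLDSvariables$, $\FOLDScontexts$, $\FOLDSformulas$. On sorts the computation is immediate: $(J \circ I)^*$ sends $A(x_f)_{f \in \pcosieve{A}{\L}}$ to $JI(A)(x_{JI(f)})_{f \in \pcosieve{A}{\L}}$, while $J^* \circ I^*$ sends it first to $I(A)(x_{I(f)})_{f \in \pcosieve{A}{\L}}$ and then to $JI(A)(x_{JI(f)})_{f \in \pcosieve{JI(A)}{\L''}}$; fullness and faithfulness of $I$ and $J$ guarantee that the bijections $\pcosieve{A}{\L} \to \pcosieve{I(A)}{\L'} \to \pcosieve{JI(A)}{\L''}$ identify these indexings. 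The variable and context components are function-composition on the nose, and the formula component extends by the inductive clauses, which commute with the sort and variable assignments.

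The only real subtlety to guard against is that the bijections between cosieves induced by $I$ and $J$ genuinely line up the ordered tuples $(x_f)_{f \in \pcosieve{A}{\L}}$: this requires knowing that $I$ and $J$ not only match cosieves but also respect the chosen proper orders that underlie the convention for indexing tuples. Since the morphisms of $\FOLDS$ are by definition order-preserving (Definition \ref{FOLDSsignature}) and $\FOLDS_i$ is a subcategory, the tuple indexings are preserved under $I$ and $J$, so the diagrammatic equality on sorts holds on the nose rather than merely up to reordering. With this bookkeeping in place the two functoriality axioms follow directly, and the functor $\LTT \colon \FOLDS_i \to \MLTT$ is established.
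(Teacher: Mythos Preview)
Your proof is correct and follows the same approach as the paper, which simply declares the result ``Immediate from Construction \ref{functortointerpretation}.'' You have supplied the routine functoriality checks (identity and composition) that the paper leaves implicit, including the observation that order-preservation of morphisms in $\FOLDS_i$ ensures the indexing tuples match up; this is exactly the sort of bookkeeping the paper is content to sweep under ``immediate.''
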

\begin{proof}
Immediate from Construction \ref{functortointerpretation}.
\end{proof}

We can now make precise the sense in which the order on a FOLDS signature $\L$ is irrelevant to $\LTT_\L$.

\begin{cor}\label{isoimpliesbiint}
Let $I \colon \L \rightarrow \L'$ be an isomorphism of FOLDS signatures. Then $\LTT_\L$ and $\LTT_{\L'}$ are bi-interpretable.
\end{cor}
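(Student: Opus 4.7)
The plan is to reduce the corollary to a short application of the functoriality result just established. I would carry out the following steps.

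First, I would check that any isomorphism $I \colon \L \to \L'$ in $\FOLDS$ actually lives in the subcategory $\FOLDS_i$. An isomorphism of categories is always full and faithful, so the only nontrivial point is level-preservation. Recalling that $l(K)$ is defined inductively from the set $\pcosieve{K}{\L}$ of non-identity arrows out of $K$, and that $I$ being an isomorphism induces a bijection $\pcosieve{K}{\L} \cong \pcosieve{I(K)}{\L'}$ (taking non-identity arrows to non-identity arrows since $I^{-1}$ exists), a straightforward induction on levels gives $l(K) = l(I(K))$ for every $K \in \ob{\L}$. Hence $I \in \FOLDS_i$, and by the same reasoning $I^{-1} \in \FOLDS_i$.

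Second, I would simply invoke the functor $\LTT \colon \FOLDS_i \to \MLTT$ constructed in the proposition preceding the corollary. Applying it to the inverse pair $I, I^{-1}$ yields interpretations
\[
I^* \colon \LTT_\L \to \LTT_{\L'}, \qquad (I^{-1})^* \colon \LTT_{\L'} \to \LTT_\L,
\]
and by functoriality $(I^{-1})^* \circ I^* = (I^{-1} \circ I)^* = (1_\L)^* = 1_{\LTT_\L}$, and symmetrically on the other side. Thus $I^*$ is an isomorphism in $\MLTT$, which is exactly the definition of bi-interpretability.

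The main obstacle, such as it is, lies entirely in the first step: verifying that isomorphisms in $\FOLDS$ automatically belong to $\FOLDS_i$. Fullness and faithfulness are free, but one does have to confirm that level-preservation follows from the bijection on cosieves together with the inductive definition of $l(-)$. Once that is in hand, the rest is pure formal nonsense: functors preserve isomorphisms. No delicate manipulation of the raw syntax of $\LTT_\L$ is needed, since Construction \ref{functortointerpretation} has already packaged all the syntactic work.
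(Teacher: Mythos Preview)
Your proof is correct and follows exactly the approach the paper intends: the corollary is stated without proof immediately after the functoriality proposition, so the implicit argument is precisely ``functors preserve isomorphisms''. Your extra care in verifying that an isomorphism in $\FOLDS$ automatically lies in $\FOLDS_i$ (in particular the inductive check of level-preservation) fills in a detail the paper leaves tacit, but the overall route is the same.
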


\def\existstoprule{(\exists\top)}

Over any FOLDS signature $\L$ we can now define a deductive system that gives us a notion of provability, and which is essentially an adaptation of a standard deductive system for $\FOLeq$. For added generality (i.e. to be able to work in fragments of FOLDS that do not contain all the logical connectives and quantifiers, namely the regular and coherent fragments) we define the deductive system as a sequent calculus.

\begin{defin}[$\DFOLDS$, $\DFOLDS^{\text{cl}}$]
We assume that every sequent that appears below is a well-formed $\L$-sequent and 
 the double lines denote a rule that goes in either direction.


\vspace{0.3cm}

\noindent\textbf{Structural Rules}

\[
\inferrule
{
\!
}
{
\Gamma \v \phi \seqimplies \phi
}
\quad \text{(iden)}
\quad
\quad
\inferrule
{
\Delta \v \phi \seqimplies \psi
}
{
\Gamma \v \alpha (\phi) \seqimplies \alpha (\psi)
}
\quad \text{(Sub)}, \:\alpha \colon \Gamma \Rightarrow \Delta
\]
\[
\inferrule
{
\Gamma \v \phi \seqimplies \psi \\ \Gamma \v \psi \seqimplies \chi
}
{
\Gamma \v \phi \seqimplies \chi
}
\quad \text{(Cut)}
\quad \quad
\inferrule
{
\Gamma \v \phi \seqimplies \psi
}
{
\Gamma, x \colon K \v \phi \seqimplies \psi
}
\quad \text{(Con-wk)}
\]

\[
\myinferrule
{
\Gamma, x \colon K, y \colon K', \Gamma' \v \phi \seqimplies \psi
}
{
\Gamma, y \colon K', x \colon K, \Gamma' \v \phi \seqimplies \psi
}
\text{(Con-exch)},  x \fresh K', y \fresh K
\]


\vspace{0.3cm}

\noindent\textbf{Logical Rules}

\[
\inferrule
{
\:
}
{
\Gamma \v \phi \seqimplies \top
}
\quad (\top)
\quad \quad 
\inferrule
{
\:
}
{
\Gamma \v \bot \seqimplies \phi
}
\quad (\bot)
\]
\[
\inferrule
{
\:
}
{
\Gamma \v \exists x \colon K. \phi \seqimplies \exists x \colon K. \top
}
\existstoprule
\]

\[
 (\wedge) \myinferrule{\Gamma \v \theta \seqimplies \phi \wedge \psi}{\Gamma \v \theta \seqimplies \phi \: \: \: \: \Gamma \v \theta \seqimplies \psi}
 \quad \quad
  (\vee) \myinferrule{\Gamma \v \phi \vee \psi \seqimplies \theta}{\Gamma \v \phi \seqimplies \theta \: \: \: \: \Gamma \v \psi \seqimplies \theta}
\]
 
\[
 (\rightarrow) \myinferrule{\Gamma \v \theta \seqimplies \phi \rightarrow \psi}{\Gamma \v \theta \wedge \phi \seqimplies \phi}
\]


\[
\myinferrule{\Gamma \v \theta \seqimplies \forall x \colon K. \phi}{\Gamma, x \colon K \v \theta \seqimplies \phi} (\forall) 
\quad \quad
\myinferrule{\Gamma \v \exists x \colon K. \phi \seqimplies \theta}{\Gamma, x \colon K \v \phi \seqimplies \theta} (\exists) 
\]
%


If we are working in the regular or coherent fragment then we also add the following rule, which is otherwise derivable:

\[
\inferrule
{
\!
}
{
\Gamma \v \phi \wedge (\exists x \colon K. \psi) \seqimplies \exists x \colon K. (\phi \wedge \psi)
}
\quad \text{(Frob)}, x \notin \var(\phi)
\]

If we are working in the coherent fragment then we also add following rule, which is otherwise derivable:

\[
\inferrule
{
\!
}
{
\Gamma \v \phi \wedge (\psi \vee  \chi) \seqimplies (\phi \wedge \psi) \vee (\phi \wedge \chi)
}
\quad \text{(Dist)}
\]
Finally, to get $\DFOLDS^{\text{cl}}$ we add the law of the excluded middle:
\[
\inferrule
{
\!
}
{
\Gamma \v \top \seqimplies \phi \vee (\phi \rightarrow \bot)
}
\quad \text{(LEM)}
\]
\end{defin}

\begin{remark}
The perhaps unfamiliar rule $\existstoprule$ is needed due to the lack of ``atomic relation symbols'' in our syntax, the role of which is played by statements of the form $\exists x \colon K. \top$ for some sort $K$. The $\existstoprule$ allows us to extract from a witness that a certain proposition holds the witness for that proposition and can therefore be thought of as encoding, semantically, the ``first projection'' of a term of a $\Sigma$-type.
\end{remark}

\def\FOLDSentails{\vdash}
\def\FOLDSentailscl{\vdash_{\text{cl}}}
\def\FOLDSentailsboth{\vdash_{(\text{cl})}}

\begin{defin}[FOLDS theory]\label{FOLDStheory}
A \textbf{FOLDS theory $\T$ over a signature $\L$} (or \textbf{FOLDS $\L$-theory}) is a set of $\L$-sequents. 
\end{defin}

\begin{defin}[Entailment]\label{entailment}
We say that a FOLDS $\L$-theory $\T$ \textbf{entails} (resp. \textbf{classically entails}) an $\L$-sequent $\tau$ if there is a derivation in $\DFOLDS$ (resp. $\DFOLDScl$) of $\tau$ from a finite subset of the sequents in $\T$. When that is the case we write $\T \FOLDSentails \tau$ (resp. $\T \FOLDSentailscl \tau$).
\end{defin}

\begin{notation}
Whenever what we want to say refers to both entailment and classical entailment then we will use the notation $\FOLDSentailsboth$.
\end{notation}

To conclude this section we present the (first-order) FOLDS axiomatization of category theory as an illustrative example that combines all the above-introduced notions.

\begin{exam}\label{Tcat}
Let $\Lcat$ denote the following  FOLDS signature
\[
\xymatrix{
\circ \ar@/^5pt/[rd]^{t_0} \ar[rd]_{t_1} \ar@/_20pt/[rd]_{t_2} & I \ar[d]^{i} & =_A \ar@/_/[ld]_{s} \ar@/^/[ld]^{t} \\
 & A \ar@/_/[d]_{d} \ar@/^/[d]^{c} \\
 & O \\
}
\]
subject to the relations
\[
dt_0=dt_2, ct_1=c1t_2, dt_1=ct_0
\]
\[
di=ci
\]
\[
ds=dt, cs=ct
\]
The (first-order) $\Lcat$-\emph{theory of categories} $\mathbb{T}_{\text{cat}}$ consists of the following axioms:
\begin{enumerate}[(1)]
\item(Existence of identities) 

$\forall x \colon O. \exists i \colon A(x,x). \exists \sigma \colon I(i,x,x). \top$ 
\item(Functionality of composition-1)

$\forall x,y,z \colon O. \forall f \colon A(x,y). \forall g \colon A(y,z). \exists h \colon A(x,z). \exists \tau \colon \circ(f,g,h,x,y,z). \top$ 
\item (Functionality of Composition-2)
\begin{align*}
\forall x,y,z \colon O. &\forall f \colon A(x,y). \forall g \colon A (y,z). \forall h,h' \colon A(x,z). 
\forall \tau_1 \colon \circ (f,g,h). \forall \tau_2 \colon \circ (f,g,h'). \\ &\exists \epsilon \colon =_A(h,h',x,z)
\end{align*}
\item(Associativity)
\[
\begin{split}
&\forall x,y,z,w \colon O. \forall f \colon A(x,y). \forall g \colon A(y,z). \forall h \colon A(z,w). \forall i \colon A(x,z). \forall j \colon A(x,w). \\ &\forall k \colon A(y,w). \forall \tau_1 \colon \circ(f,g,i,x,y,z). \forall \tau_2 \colon \circ(i,h,j,x,z,w). \forall \tau_3 \colon \circ (g,h,k,y,z,w). \\ &\exists \tau_4 \colon \circ (f,k,j,x,y,w). \top
\end{split}
\]
\item(Uniqueness of identity)

$\forall x \colon O. \forall i,j \colon A(x,x). \forall \sigma_1 \colon I(i,x,x). \forall \sigma_2 \colon I(j,x,x). \exists \epsilon \colon =_A(i,j,x,x). \top$ 
\item (Right unit)

$\forall x,y \colon O. \forall i \colon A(x,x). \forall g \colon A(x,y). \forall \sigma \colon I(i,x,x). \exists \tau \colon \circ (i,g,g,x,x,y). \top$ 
\item (Left unit) 

$\forall x,y \colon O. \forall i \colon A(y,y). \forall f \colon A(x,y). \forall \phi \colon I(i,y,y). \exists \tau \colon \circ (f,i,f,x,y,y). \top$ 
\end{enumerate}
\end{exam}

\begin{exam}\label{Tgpd}
To obtain the (first-order) $\Lcat$-\emph{theory of groupoids} $\Tgpd$ we add to the axioms of $\Tcat$ the following axiom
\begin{enumerate}[(1)]
\setcounter{enumi}{7}
\item (Every arrow is an isomorphism)

 $\forall x,y \colon K. \forall p \colon x \cong y. \exists q \colon y \cong x. \exists u \colon x \cong x. \exists v \colon y \cong y. I(u) \wedge I(v) \wedge \circ(p,q,u) \wedge \circ (q,p, v)$
\end{enumerate}
where we have now used the sugared form for the composition and identity relations.
\end{exam}

\section{Semantics of FOLDS in MLTT}\label{FOLDSinMLTT}

We now describe a semantics of FOLDS in MLTT, constructed as a direct interpretation of the syntax of $\LTT_\L$ into MLTT, and prove that under this interpretation the rules of $\DFOLDS$ (resp. $\DFOLDScl$) are sound in MLTT (resp. classical MLTT, which we understand as MLTT together with (an inhabitant of the type representing) LEM).
For the purposes of this section by MLTT we will understand type theory with $\Pi, \Sigma, \onetype$, a universe a-la-Tarski $\U$ and (depending on the fragment of first-order logic that we want to work with) $+, \zerotype$.

We first define for any FOLDS signature $\L$ a notion of an $\L$-structure. We do this by describing a type expression $\Struc{\L}$ (Definition \ref{FOLDSLstructure}), proving that this type expression is a well-formed type (Theorem \ref{strucwellformed}) and taking its terms to be our $\L$-structures.
We then define an interpretation of $\LTT_\L$ into MLTT which gives us (for any $\L$-structure $\M$) notions of $\M$-contexts, formulas, sequents etc. (Definition \ref{interpretationFOLDS}) and prove that this interpretation is correct (Theorem \ref{FOLDSsoundness}).
In particular, any sequent $\sigma$ is interpreted as a type $\sigma^\M$  in type theory following the usual recipe of the proposition-as-types interpretation (interpreting $\wedge$ as $\times$, $\forall$ as $\Pi$ etc.) and then we say that $\M$ satisfies $\sigma$ if $\sigma^\M$ is inhabited (Definition \ref{FOLDSsatisfaction}). Finally, we prove soundness of $\DFOLDS$ with respect to the above-defined semantics in MLTT (Theorem \ref{soundness}).

\begin{notation}
We write $\MLTTcontexts$ for the context expressions (``pre-contexts''), $\MLTTterms$ for the term expresssions (``pre-terms''), $\MLTTtypes$ for the type expressions (``pre-types'') and $\MLTTjudgments$ for the sequent expressions (``pre-sequents'') of MLTT.
We will use the notation $\equiv$ for definitional equality when it is derived and the symbol $\eqdef$, as usual, when we are defining a term by postulating a definitional equality.
We write $\TTapp(x,f)$ for function application and $\TTproj(x)$ for the projections in $\Sigma$-types, and we write $\Elements$ for the reflector of the universe $\U$.
We write $*$ for the term of $\onetype$.
We will write $\vert \vert A \vert \vert$ for the propositional truncation of a type $A$ in $\mathcal{U}$.
We will usually write $\times$ for  non-dependent sums 
($\Sigmatype$-types) 
and $\rightarrow$ for non-dependent functions ($\Pitype$-types).
We will occasionally also use $\times$ and $\rightarrow$ in the Agda-style notation $(x \colon A)\rightarrow B$ and $(x \colon A) \times B$ for dependent functions and sums, whenever convenient.
For a given (ordered) set $S$ (almost exclusively of objects and arrows of a given inverse category) we will use the notation $\underset{s \in S}{\Sigmatype}$ and $\underset{s \in S}{\Pitype}$ to denote the sums and functions over all the expressions indexed in some way by $S$ (this is an ``external'' description of a type expression in MLTT which a priori may not be well-formed).
We will assume that every expression that ranges over the objects and morphisms of $\L$ is ordered according to the given proper order of $\L$.
For example, if $S=\lbrace K_1, \dots, K_n \rbrace$ is a set of objects of $\L$ such that $K_1 < \dots < K_n$ according to the proper order on $\L$ and we write
\[
\underset{K \in S}{\Pitype} E(K)
\]
(for some expression $E(K)$ in which $K$ may appear) this notation will denote the expression
$E(K_1) \rightarrow \dots \rightarrow E(K_n)$ and similarly for $\Sigmatype$-types.
This notational device will help us declutter notation significantly, especially in Definitions \ref{FOLDSLstructure} and \ref{HLstructure}.
Whenever $S$ is empty then we stipulate that 
\begin{align*}
\underset{K \in S}{\Pitype} E &= \onetype \rightarrow E
\\
\underset{K \in S}{\Sigmatype} E &= \zerotype
\end{align*}
For any context $\Gamma$ and type $A$ in context $\Gamma$ we write $\Sigmatype \: (\Gamma) \: A$ (resp. $\Pitype \: (\Gamma) \: A$) for the iterated (closed) $\Sigmatype$-type (resp. $\Pitype$-type) which has bound everything in context $\Gamma$. We will also sometimes write simply $\mathbf{x} \colon \Gamma$ for a list of variables in a given context, e.g. as in $\underset{\mathbf{x} \colon \Gamma}{\Pitype} A \TTisatype$.

We also fix several more conventions for the interaction for the syntax of a FOLDS signature $\L$ and the syntax of MLTT.
We will allow ourselves to denote variables in type theory by morphisms of $\L$.
For example, in $\Lrg$, we may describe the interpretation of $A$ as a type in context $c \colon O, d \colon O$ (cf. Example \ref{Lrghstruc} below for a more precise illustration of how this works).
For $K \in \L$ and $\mathbf{p}=(p_1,\dots,p_m)$ a list of arrows in $\L$ the notation $\TTapp[K(\mathbf{p})]$ in type theory will now be understood as the term expression 
\[
\ttapp (p_m, \ttapp(p_{m-1}, \dots \ttapp (p_1, K) \dots))
\]
with the understanding that 
\[
\TTapp[K\:\epsilon] = \TTapp(*,K)
\]
For example, in $\Lrg$, the notation $\TTapp[I(x,f)]$ stands for the term expression
\[
\ttapp (f, \ttapp(x, I))
\]
which of course will make sense if $I \colon (x \colon O) \rightarrow (f \colon A (x,x)) \rightarrow \U$ and $A(x,x) \eqdef \ttapp(x, (x,A))$ for $A \colon O \rightarrow O \rightarrow \U$, as indeed will be the case when the notation is used.
\end{notation}

\begin{terminology}
We say that a type (expression) $A$ or context (expression) $\Gamma$ etc. is \emph{well-formed} (in type theory) if the appropriate judgment ($A \TTisatype$ or $\Gamma \isacontext$)  is derivable (in type theory). A \emph{closed} type is a well-formed type in the empty context.
We will generally refer to the relevant rules of MLTT (and, below, of HoTT) through the relevant type formers, e.g. $\Sigmatype$-\emph{formation}, $\Pitype$-\emph{elimination} etc.
\end{terminology}


\begin{defin}\label{FOLDSLstructure}
Let $\L$ be a finite FOLDS signature. The type of \textbf{$\L$-structures} is given by the type expression
\begin{equation}
\Struc{\L} \eqdef \underset{K \in \ob{\L}}{\Sigmatype} (K \colon T_K)
\end{equation}
where
\begin{equation}
T_K \eqdef \underset{f \in \pcosieve{K}{\L}}{\Pitype} (f \colon \TTElapp{K_f (pf)_{p \in \pcosieve{K_f}{\L}}}) \:\: \U 
\end{equation}
\end{defin}

\begin{prop}\label{isoLequalstrucL}
If $\L$ and $\L'$ are isomorphic FOLDS signatures then $\Struc{\L} \equiv \Struc{\L'}$.
\end{prop}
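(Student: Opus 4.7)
The plan is to show that the type expression $\Struc{\L}$ depends only on the abstract ordered inverse-category structure of $\L$, so that any isomorphism $I \colon \L \to \L'$ of FOLDS signatures yields the same raw expression on both sides, read up to $\alpha$-equivalence of bound variables. Concretely, $\Struc{\L}$ is an iterated $\Sigmatype$-type whose $n$-th bound variable is named after the $n$-th object $K_n \in \ob{\L}$ (in the order $\obord$), and has type $T_{K_n}$, itself an iterated $\Pitype$-type whose bound variables are named after the arrows $f \in \pcosieve{K_n}{\L}$ (in the order $\morord$), with bodies applying the earlier-bound variable $K_f$ to terms indexed by $\pcosieve{K_f}{\L}$. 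All data used to form this expression, then, is part of the ordered inverse-category datum $(\L,<)$.

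First I would observe that an isomorphism $I \colon \L \to \L'$ in $\FOLDS(O,M)$ is an order-preserving functor with an order-preserving inverse, hence an order isomorphism on $\ob{\L}$ and (by functoriality together with preservation of $\morord$) induces, for each $K \in \ob{\L}$, an order isomorphism $\pcosieve{K}{\L} \to \pcosieve{I(K)}{\L'}$ sending $f \mapsto I(f)$. By functoriality we also have $I(K_f) = (I(K))_{I(f)}$ and $I(pf) = I(p)I(f)$ for $p \in \pcosieve{K_f}{\L}$.

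Next, by induction on the level stratification of $\L$, I would show that for every $K$ the $I$-renaming of $T_K$ (i.e. the result of replacing every variable name drawn from a symbol of $\L$ by its $I$-image in $\L'$) is syntactically the expression $T_{I(K)}$ in $\L'$: the $\Pitype$-binders align because the order bijection above places them in the same positions, the binder types $\TTElapp{K_f(pf)_{p \in \pcosieve{K_f}{\L}}}$ transport to $\TTElapp{(I(K))_{I(f)}(I(p)I(f))_{p \in \pcosieve{(I(K))_{I(f)}}{\L'}}}$ using the identities above, and these are exactly the binder types that appear in $T_{I(K)}$. Folding this back into the outer $\Sigmatype$ (whose binders align by the order isomorphism on $\ob{\L}$) shows that the $I$-induced renaming of $\Struc{\L}$ is literally the expression $\Struc{\L'}$, and so, under the standard convention that type expressions are identified up to $\alpha$-equivalence of bound variables, $\Struc{\L} \equiv \Struc{\L'}$.

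The main obstacle is purely notational: because the definition uses the symbols of $\L$ themselves as the bound variable names, without identifying expressions up to $\alpha$-equivalence the two sides would differ as raw strings whenever $I$ is non-trivial on the underlying symbols. The content of the proof is therefore the precise observation that the data of an isomorphism of FOLDS signatures is exactly the data of an $\alpha$-renaming of bound variables that takes one iterated $\Sigmatype$-expression to the other; once this alignment is pinned down, the inductive verification that bodies match is entirely routine.
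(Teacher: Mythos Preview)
Your proposal is correct and follows essentially the same approach as the paper: the paper's proof is a one-liner observing that if $\L$ and $\L'$ are isomorphic then the type expressions $\Struc{\L}$ and $\Struc{\L'}$ are $\alpha$-equivalent, and hence denote the same type expression. Your argument is simply an unpacking of exactly this observation, spelling out the inductive verification that the order-preserving isomorphism induces the required $\alpha$-renaming at each binder.
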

\begin{proof}
If $\L$ and $\L'$ are isomorphic then the type expressions $\Struc{\L}$ and $\Struc{\L'}$ are $\alpha$-equivalent, and therefore denote the same type expression.
\end{proof}

Before proving that $\Struc{\L}$ is indeed a well-formed type we give several examples that illustrate the idea.

\begin{exam}\label{Lrghstruc}
For $\Lrg$ we have
\begin{align*}
\ttype{O}{} &= \onetype \rightarrow \U \quad \quad (\text{since $\pcosieve{O}{\Lrg} = \varnothing$})\\
\ttype{A}{} &= \underset{f \in \lbrace c,d \rbrace}{\Pitype} (f \colon \TTElapp{A_f (pf)_{p \in \pcosieve{A_f}{\Lrg}}}) \:\: \U\\
&= \Pitype (c \colon \TTElapp{A_c (pc)_{p \in \pcosieve{A_c}{\Lrg}}}) (d \colon \TTElapp{A_d (pd)_{p \in \pcosieve{A_d}{\Lrg}}}) \:\: \U \\
&= \Pitype (c \colon \Elements (\TTapp (*,O))) (d \colon \Elements (\TTapp (*,O))) \:\: \U \\
&\equiv \Elements (\TTapp (*,O)) \rightarrow \Elements (\TTapp (*,O)) \rightarrow \U \\
\ttype{I}{} &= \underset{f \in \lbrace di,i \rbrace}{\Pitype} (f \colon \TTElapp{A_f (pf)_{p \in \pcosieve{A_f}{\Lrg}}}) \:\: \U\\
&= \Pitype (di \colon \Elements(\TTapp (*,I_{di})) (i \colon \TTapp{I_i (pi)_{p \in \pcosieve{I_i}{\Lrg}}}) \:\: \U \\
&= \Pitype (di \colon \Elements(\TTapp (*,O)) (i \colon \Elements( \TTapp (di, \TTapp (ci,A))) \:\: \U \\
&\equiv \Pitype (x \colon \Elements(\TTapp (*,O)) (f \colon \Elements( \TTapp (x, \TTapp (x,A))) \:\: \U \\
&\equiv \Pitype (x \colon \Elements(\TTapp (*,O)) \:\: \Elements( \TTapp (x, \TTapp (x,A))) \rightarrow \U
\end{align*}
Note that if $O \colon T_O$ then $T_A$ is well-formed and that if $A \colon T_A$ then $T_I$ is well-formed. 
Furthermore, note that in the last two lines of the unpacking of definition of $T_I$ we have given an $\alpha$-equivalent reformulation, where the repetition of the $x$ is due to the fact that $di=ci$.
If we abuse notation and write simply $O$ for $\Elements(\TTapp (*,O))$ and simply $A(x,x)$ for $\Elements( \TTapp (x, \TTapp (x,A)))$ then $T_I$ becomes the more recognizable type
\[
\underset{x \colon O}{\Pitype} A(x,x) \rightarrow \U
\]
With that in mind we get
\begin{align*}
\Struc{\Lrg} &= (O \colon T_O) \times (A \colon T_A) \times (I \colon T_I) \\
		  &= (O \colon \U) \times (A \colon O \rightarrow O \rightarrow \U) \times (I \colon \underset{x \colon O}{\Pitype} A(x,x) \rightarrow \U)		
\end{align*}
which is the data type we should expect from $\Lrg$, i.e. the type of ``reflexive graphs''.
\end{exam}

\begin{exam}
We do an example that is a little more arbitrary. Consider the following FOLDS signature $\L$
\[
\xymatrix{
R \ar[d]^{f} \ar[dr]_{g} \ar@/^/[dr]^{e} \\
A_1 \ar@/_/[d]^{k} \ar[dr]^{d} \ar@/_/[dr]_{l} &A_2  \ar[d]_{c} \\
O_1  &O_2 
}
\]
 subject to the relation $df=cg$.
Then using the obvious notational abbreviations as in the previous example we have:
\begin{align*}
\ttype{O_1}{} &\equiv \U \\
\ttype{O_2}{} &\equiv \U \\
\ttype{A_1}{} 
&\equiv (k \colon O_1) \rightarrow (l \colon O_2) \rightarrow (d \colon O_2) \rightarrow \U \equiv O_1 \rightarrow O_2 \rightarrow O_2 \rightarrow \U \\
\ttype{A_2}{} 
		 &\equiv (c \colon O_2) \rightarrow \U \equiv O_2 \rightarrow \U\\
\ttype{R}{} 
	        &\equiv (df \colon O_1) \rightarrow (lf \colon O_2) \rightarrow (df \colon O_2) \rightarrow (ce \colon O_2) \\
	        &\:\:\:\:\:\:\:\: \rightarrow (f \colon A_1 (kf,lf,df)) \rightarrow (g \colon A_2 (df)) \rightarrow (e \colon A_2 (ce)) \rightarrow \U \\
	&\equiv (x \colon O_1) \times (y \colon O_2) \times (z \colon O_2) \times (w \colon O_2) \\
	&\:\:\:\:\:\:\:\: \rightarrow (f \colon A_1 (x,y,z)) \times (g \colon A_2 (z)) \times (e \colon A_2 (w)) \rightarrow \U
\end{align*}
Thus we get
\begin{align*}
\Struc{\L} = &(O_1 \colon \U) \times (O_2 \colon \U) \times  \\
		  &(A_1 \colon O_1 \rightarrow O_2 \rightarrow O_2 \rightarrow \U) \times (A_2 \colon O_2 \rightarrow \U) \times \\
		  &(R \colon (x \colon O_1) \times (y \colon O_2) \times (z \colon O_2) \times (w \colon O_2) \\
	&\:\:\:\:\:\:\:\: \rightarrow (x_f \colon A_1 (x,y,z)) \times (x_g \colon A_2 (z)) \times (x_e \colon A_2 (w)) \rightarrow \U )
\end{align*}
which is the data type we should expect from $\L$.
\end{exam}

\begin{exam}
In $\Lcat$ we have $T_O, T_A, T_I$ just as in $\Lrg$. Then we have, using the usual abbreviations:
\begin{align*}
\ttype{=_A}{} &= \underset{f \in \lbrace ds, cs, s,t \rbrace}{\Pitype} (f \colon \TTElapp{(=_A)_f (pf)_{p \in \pcosieve{(=_A)_f}{\Lcat}}}) \:\: \U  \\ 
&= \Pitype (ds, cs \colon O) (s \colon \Elements (\TTapp (cs, \TTapp (ds, A)))) (t \colon \Elements (\TTapp (ct, \TTapp (dt, A)))) \:\: \U \\
&\equiv \underset{x,y \colon O}{\Pitype} A(x,y) \rightarrow A(x,y) \rightarrow \U \\ 
\ttype{\circ}{} &=  \underset{f \in \lbrace dt_0, ct_0, ct_2, t_0,t_1,t_2 \rbrace}{\Pitype} (f \colon \TTElapp{\circ_f (pf)_{p \in \pcosieve{\circ_f}{\Lcat}}}) \:\: \U \\
&= \Pitype (dt_0, dt_1, ct_2 \colon O) (t_0 \colon \Elements (\TTapp (ct_0, \TTapp (dt_0, A))))  \\
&\quad \quad \:\: (t_1 \colon \Elements (\TTapp (ct_1, \TTapp (dt_1, A)))) (t_2 \colon \Elements (\TTapp (ct_2, \TTapp (dt_2, A)))) \:\: \U \\
&\equiv \underset{x,y,z \colon O}{\Pitype} A(x,y) \rightarrow A(y,z) \rightarrow A(x,z) \rightarrow \U \\
\end{align*}
As we should expect $T_{=_A}$ is the type of a binary relation on arrows with the same domain and codomain and $T_{\circ}$ is the type of a tertiary relation on ``commutative'' triangles.
Thus we get
\begin{align*}
\Struc{\Lcat} = &(O \colon \U) \times (A \colon O \rightarrow O \rightarrow \U) \times (I \colon (x \colon O) \rightarrow A(x,x) \rightarrow \U) \times \\ 
&(\circ \colon (x,y,z \colon O) \rightarrow A(x,y) \rightarrow A(y,z) \rightarrow A(x,z) \rightarrow \U)
\end{align*}
which, as we shall make more precise in Section \ref{nlogExamples}, is the ``relational'' form of the signature for (pre)category theory.
\end{exam}


\begin{lemma}\label{strucwellformedlemma}
Let $\L$ be a FOLDS signature. For all $K \in \ob{\L}$ the following context is well-formed
\[
\Gamma_K \eqdef (A \colon T_A)_{A \leq K}
\]
\end{lemma}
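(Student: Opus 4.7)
The plan is to induct on the proper order $\obord$ on $\ob{\L}$, carried by the inverse-category hypothesis. Write $\Gamma_{<K}$ for the prefix consisting of $(A \colon T_A)_{A\obord K,\,A\neq K}$. Since $\Gamma_K \equiv \Gamma_{<K}, K \colon T_K$, it suffices by $\conext$ (applied in MLTT) to show, given the induction hypothesis that $\Gamma_{<K}$ is a well-formed context, that $T_K$ is a well-formed type in context $\Gamma_{<K}$. The base case $l(K) = 0$ is immediate: $\pcosieve{K}{\L}= \varnothing$, so by the stipulated convention $T_K \equiv \onetype \rightarrow \U$, which is well-formed in any well-formed context by $\Pitype$- and $\U$-formation.

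\textbf{Inductive step.} Enumerate $\pcosieve{K}{\L} = \{f_1 <_m f_2 <_m \dots <_m f_n\}$ using the proper order on morphisms. Then by definition
\[
T_K \;\equiv\; \Pitype (f_1 \colon B_1)(f_2 \colon B_2)\cdots(f_n \colon B_n)\:\U,
\]
where $B_i \eqdef \TTElapp{K_{f_i}(pf_i)_{p \in \pcosieve{K_{f_i}}{\L}}}$. I would prove by a nested induction on $i \leq n$ that each $B_i$ is a well-formed type in the context
\[
\Delta_i \;\eqdef\; \Gamma_{<K},\, f_1\colon B_1,\, \dots,\, f_{i-1} \colon B_{i-1};
\]
iterated $\Pitype$-formation then gives the body of $T_K$ well-formed, and one final $\U$-formation step closes it off. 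To check $B_i \TTisatype$ in $\Delta_i$, observe first that $K_{f_i} \obord K$ strictly (because $f_i$ is a non-identity arrow and $\L$ is skeletal with no non-identity endomorphisms), so the variable $K_{f_i}\colon T_{K_{f_i}}$ already occurs in $\Gamma_{<K} \subseteq \Delta_i$. Next, for each $p \in \pcosieve{K_{f_i}}{\L}$, the composite $pf_i$ is a non-identity arrow with $\dom(pf_i) = K$, hence $pf_i \in \pcosieve{K}{\L}$, and $\cod(pf_i) = K_p \obord K_{f_i} = \cod(f_i)$ strictly, so $pf_i \morord f_i$ strictly in the proper order. Thus $pf_i$ equals some $f_j$ with $j < i$ and has already been bound in $\Delta_i$. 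Iterating the $\Pitype$-elimination/$\ttapp$ rule of MLTT applied to $K_{f_i}$ against the previously bound variables $(pf_i)_{p}$ in the prescribed order, followed by a single use of the reflector $\Elements$, produces $B_i$ as a well-formed type, as required.

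\textbf{Main obstacle.} The essential (and only genuinely nontrivial) point is verifying that the proper order lines up so that every variable the expression $B_i$ refers to — both the outer $K_{f_i}\colon T_{K_{f_i}}$ living in $\Gamma_{<K}$ and the inner arguments $pf_i$ living in $\Delta_i$ — has already been declared by the time it is needed. This is exactly what the two clauses of Definition \ref{properorder} guarantee (objects descend with arrows, morphisms respect the codomain order), and it is also what forces us to use the proper order rather than an arbitrary enumeration. A small book-keeping issue is the compatibility of the iterated application $\TTapp$ with the ordered tuple $(pf_i)_{p \in \pcosieve{K_{f_i}}{\L}}$; but since $T_{K_{f_i}}$ was itself defined as a $\Pi$ over $\pcosieve{K_{f_i}}{\L}$ in the same proper order, the applications $\ttapp$ can be peeled off in matching order, and each intermediate judgment remains derivable. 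Everything else is a routine application of the formation and elimination rules of MLTT.
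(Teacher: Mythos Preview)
Your proof is correct and follows essentially the same approach as the paper's: an outer induction on the proper order of objects, and an inner induction on the proper order of morphisms in $\pcosieve{K}{\L}$, with the key observation being that $pf_i \morord f_i$ strictly so that all arguments needed to form $B_i$ have already been declared. The paper's version is organized identically, differing only in cosmetic details (it phrases the outer base case as ``$K$ is $<$-minimal'' rather than ``$l(K)=0$'', and it explicitly notes that distinct $q,q'$ may give $qf = q'f$, a point you absorb into your ``book-keeping'' remark).
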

\begin{proof}
Firstly note that by the well-foundedness of (the underlying inverse category of) $\L$ for any $K \in \ob{\L}$ there will be only finitely many $A \leq K$ and so $\Gamma_K$ will be a finite list.
We now proceed by $<$-induction on $K$. If $K$ is $<$-minimal then $\Gamma_K \equiv K \colon T_K$ and by definition $T_K \equiv \onetype \rightarrow \U$. But since we assume that our type theory contains $\onetype, \Pitype$ and $\U$ we have
\begin{equation}
\varnothing \vdash \onetype \rightarrow \U \TTisatype
\end{equation}
which implies that $K \colon \onetype \rightarrow \U \equiv \Gamma_K$ is well-formed, as required.
Now assume that $\Gamma_A$ is well-formed for all $A <K$. To show that $\Gamma_K$ is well-formed it suffices to show that $\Gamma_A \vdash T_K \TTisatype$ where $A$ is the immediately preceding object to $K$ (according to $<$). Since $T_K$ is the following $\Pi$-type
\begin{equation}
\underset{f \in \pcosieve{K}{\L}}{\Pitype} (f \colon \TTElapp{K_f (pf)_{p \in \pcosieve{K}{\L}}}) \: \U 
\end{equation}
it suffices to show that 
\begin{equation}
\Gamma_A, (f \colon \TTElapp{K_f(\mathbf{p}f)})_{f \in \pcosieve{K}{\L}} \vdash \U \TTisatype
\end{equation}
for which it suffices to show that the context
\begin{equation}\label{target}
\Gamma_A, (f \colon \TTElapp{K_f(\mathbf{p}f)})_{f \in \pcosieve{K}{\L}}
\end{equation}
is well-formed. We proceed by $<$-induction on $f$ (by which we mean that starting with $\Gamma_A$ we will derive the context (\ref{target})). If $f$ is $<$-minimal then $\level{K_f} = 0$ which means that 
\begin{equation}
\TTElapp{K_f(\mathbf{p}f)} \equiv \Elements (\TTapp (*,K_f))
\end{equation}
But since $l(K_f)=0$ and $K_f < A$, then $K_f$ appears in $\Gamma_A$ and its type is $\onetype \rightarrow \U$. Hence we can derive
\[
\Gamma_A \vdash \Elements (\TTapp (*,K_f)) \TTisatype
\]
which implies that $\Gamma_A, (f \colon K_f)$ is a well-formed context.
Now assume that we know that for some $f \in \pcosieve{K}{\L}$ the following context
\begin{equation}
\Delta \eqdef \Gamma_A, (g \colon \TTElapp{K_g(\mathbf{p}g)})_{g<f}
\end{equation}
is well formed. We need to show that
\begin{equation}\label{deltants}
\Delta \vdash \TTElapp{K_f(\mathbf{q}f)} \TTisatype
\end{equation}
is derivable. Since $K_f < K$ we know that $K_f$ appears in $\Gamma_A$ as
\begin{equation}\label{Kftype}
K_f \colon \underset{h \in \pcosieve{K_f}{\L}}{\Pitype} (h \colon \TTElapp{K_h (\mathbf{q}h)}) \:\: \U
\end{equation}
and also for each $q \in \pcosieve{K_f}{\L}$ we have $qf<f$ and hence 
$
(qf \colon \TTElapp{K_{qf}(\mathbf{p}qf))} \in \Delta
$
which means that for each $q \in \pcosieve{K_f}{\L}$ we can derive
\begin{equation}\label{qfvar}
\Delta \vdash qf \colon \TTElapp{K_{qf}(\mathbf{p}qf)}
\end{equation}
where, crucially, some of these judgments may be declaring the same variable since it may happen that $qf=q'f$ even if $q \neq q'$.
But then by ($\pcosieve{K_f}{\L}$-many) successive applications of $\Pitype$-elimination using (\ref{Kftype}) and (\ref{qfvar}) we obtain 
\begin{equation}
\Delta \vdash \TTapp[K_f(\mathbf{q}f)] \colon \U
\end{equation}
from which we immediately obtain (\ref{deltants}) as required.
Hence $\Gamma_A \vdash T_K \TTisatype$ and we are done.
\end{proof}

\begin{theo}\label{strucwellformed}
$\Struc{\L}$ is a closed type.
\end{theo}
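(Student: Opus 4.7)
The plan is to deduce $\varnothing \vdash \Struc{\L} \TTisatype$ directly from Lemma \ref{strucwellformedlemma}, once we observe that $\Struc{\L}$ is nothing but the iterated $\Sigmatype$-closure of a context of the form $(K \colon T_K)_{K \in \ob{\L}}$.

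First, by Corollary \ref{isoimpliesbiint} (together with Proposition \ref{isoLequalstrucL}, which tells us $\Struc{\L}$ is invariant under isomorphism of FOLDS signatures) we may replace the given proper order by an extension to a total order on $\ob{\L}$; a linear extension always exists because $\L$ is finite, and the resulting order is still proper in the sense of Definition \ref{properorder}. Let $K_{\max}$ denote its top element. Since every $A \in \ob{\L}$ satisfies $A \leq K_{\max}$, the context
\[
\Gamma_{K_{\max}} \equiv (A \colon T_A)_{A \leq K_{\max}}
\]
from Lemma \ref{strucwellformedlemma} is literally the full context $(K \colon T_K)_{K \in \ob{\L}}$ that underlies $\Struc{\L}$.

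Next, Lemma \ref{strucwellformedlemma} applied to $K_{\max}$ gives that $\Gamma_{K_{\max}} \isacontext$ is derivable. Now write $\Gamma_{K_{\max}} = K_1 \colon T_{K_1}, \dots, K_n \colon T_{K_n}$ in the chosen order. Proceeding by reverse induction on $i = n, n-1, \dots, 0$, using $\Sigmatype$-formation at each step, we obtain
\[
K_1 \colon T_{K_1}, \dots, K_i \colon T_{K_i} \vdash \underset{j > i}{\Sigmatype}(K_j \colon T_{K_j}) \TTisatype
\]
(with the usual convention that the empty iterated $\Sigmatype$ is interpreted as $\onetype$, or, if the last binding is taken as the body, that the induction starts one step earlier). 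At $i=0$ this is precisely $\varnothing \vdash \Struc{\L} \TTisatype$.

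The only subtlety is bookkeeping: one must check that the iterated $\Sigmatype$ notation $\underset{K \in \ob{\L}}{\Sigmatype}(K \colon T_K)$ of Definition \ref{FOLDSLstructure} is indexed in accordance with the chosen proper order, and that Lemma \ref{strucwellformedlemma} already handled the nontrivial content (namely, that each $T_K$ is well-formed in the context of its predecessors). With that in place the theorem is a direct corollary. I expect no real obstacle beyond this accounting; all the genuine inductive work has been absorbed into Lemma \ref{strucwellformedlemma}.
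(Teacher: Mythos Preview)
Your proposal is correct and follows essentially the same route as the paper: invoke Lemma \ref{strucwellformedlemma} to obtain the full context $(K \colon T_K)_{K \in \ob{\L}}$ as well-formed, then close it up by iterated $\Sigmatype$-formation. The paper's proof is terser—it simply writes $\Gamma_\L \eqdef (K \colon T_K)_{K \in \ob{\L}}$ and appeals to the lemma and finiteness directly—whereas you are more explicit about needing a top element $K_{\max}$ and hence a total extension of the proper order; this extra care is fine but the paper treats the linear listing of objects as already part of its standing notational conventions, so your invocation of Corollary \ref{isoimpliesbiint} and Proposition \ref{isoLequalstrucL} is not strictly needed.
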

\begin{proof}
Since $\L$ is assumed finite, by Lemma \ref{strucwellformedlemma} we get immediately that
\[
\Gamma_\L \eqdef (K \colon T_K)_{K \in \ob{\L}}
\]
is well-formed. 
And then we obtain $\Struc{\L}$ (in the empty context) by successive applications of $\Sigma$-formation.
\end{proof}

\begin{defin}
An \textbf{$\L$-structure} $\M$ is a term of $\Struc{\L}$.
\end{defin}

We will now define an interpretation $\interpretation{-}$ of $\LTT_\L$ into MLTT.
To be clear, what we go on to describe in Definition \ref{interpretationFOLDS} below is a mapping from the raw syntax of $\LTT_\L$ into the raw syntax of MLTT.
We will then prove in Theorem \ref{FOLDSsoundness} that this mapping preserves the rules of $\LTT_\L$ and therefore deserves to be called an interpretation.
In particular, this means that we automatically obtain an interpretation of $\LTT_\L$ into any categorical model of MLTT (e.g. CwFs or $C$-systems with the appropriate extra structure).

\begin{notation}
For any $\L$-structure $\M$ and any sort $K$ in $\L$ we denote by $K^\M$ the ``$K^{\text{th}}$ projection of $\M$''. More precisely if $K_1 < \dots K_m$ are the (ordered) objects of $\L$ then 
\begin{equation}
(K_i)^\M \eqdef \TTproj_i (\M)
\end{equation}
\end{notation}

\begin{convention}
We will assume that the variables of MLTT are given by a set  that contains the variables $V$ of $\LTT_\L$. This allows us to simply re-use the same symbol for a variable and its interpretation, and as such we will not include the mapping of variables of $\LTT_\L$ to variables of MLTT in the definition below.
\end{convention}

\newcommand{\FOLDSdepth}[1]{d(#1)}

\begin{defin}[Interpretation of $\LTT_\L$ in MLTT]\label{interpretationFOLDS}
The \textbf{depth} of the expressions in the raw syntax of $\LTT_\L$ is defined as follows:
\begin{align*}
\FOLDSdepth{\varnothing} &= 0 \\
\FOLDSdepth{\Gamma, x \colon K} &= \FOLDSdepth{\Gamma} + \FOLDSdepth{K}+1 \\
\FOLDSdepth{A (\mathbf{x})} &= \level{A}+1 \\
\FOLDSdepth{\top} &= \FOLDSdepth{\bot} = 1  \\
\FOLDSdepth{\phi * \psi} &= \FOLDSdepth{\phi} + \FOLDSdepth{\psi} \quad \quad \quad \: (*= \wedge, \vee, \rightarrow) \\
\FOLDSdepth{Qx \colon K. \phi} &= \FOLDSdepth{K}+\FOLDSdepth{\phi} \quad \quad \quad (Q= \exists, \forall) \\
\FOLDSdepth{\Gamma \isacontext} &= \FOLDSdepth{\Gamma} \\
\FOLDSdepth{\Gamma \vdash K \isatype} &= \FOLDSdepth{\Gamma} + \FOLDSdepth{K} \\
\FOLDSdepth{\Gamma \vdash x \colon K} &= \FOLDSdepth{\Gamma} +1 \\
\FOLDSdepth{\Gamma \vdash \phi \isaformula} &= \FOLDSdepth{\Gamma}+\FOLDSdepth{\phi} \\
\end{align*}
The \textbf{interpretation of $\LTT_\L$ into MLTT}
 consists of the following functions:
\begin{align*}
\interpretation{-}_c \colon \FOLDScontexts &\rightarrow \MLTTcontexts \\
                                \varnothing          &\mapsto     \M \colon \Struc{\L}     \\
                                \Gamma, x \colon K &\mapsto \interpretation{\Gamma}_c, x \colon \interpretation{K}_s
\end{align*}
\begin{align*}
\interpretation{-}_s \colon \FOLDSsorts &\rightarrow \MLTTtypes \\
                                K (x_{\mathbf{p}f})          &\mapsto     \TTElapp{K^\M (x_{\mathbf{p}f})}      \\                  
\end{align*}
\begin{align*}
\interpretation{-}_f \colon \FOLDSformulas &\rightarrow \MLTTtypes \\
                                \bot         &\mapsto     \zerotype                 \\ 
                                \top          &\mapsto     \onetype                 \\
                                \phi \wedge \psi         &\mapsto   \interpretation{\phi}_f \times \interpretation{\psi}_f        \\
                                \phi \vee \psi         &\mapsto   \interpretation{\phi}_f + \interpretation{\psi}_f   \\ 
                               \phi \rightarrow \psi         &\mapsto \interpretation{\phi}_f \rightarrow \interpretation{\psi}_f    \\
                                \exists x \colon K. \phi &\mapsto \Sigmatype \:(x \colon \interpretation{K}_s)\: \interpretation{\phi}_f \\
                                \forall x \colon K. \phi &\mapsto \Pitype \:(x \colon \interpretation{K}_s) \: \interpretation{\phi}_f
\end{align*}
We will drop the subscripts of each separate interpretation function and write simply $\interpretation{\Gamma}, \interpretation{K}, \interpretation{\phi}$. 
With this in mind the interpretation of the judgments of $\LTT_\L$ is given by the following function:
\begin{align*}
\interpretation{-} \colon \FOLDSjudgments &\rightarrow \MLTTjudgments \\
                                \Gamma \isacontext          &\mapsto     \interpretation{\Gamma} \isacontext      \\
                                \Gamma \vdash K \isatype &\mapsto \interpretation{\Gamma} \vdash  \interpretation{K} \TTisatype  \\
     \Gamma \vdash x \colon K &\mapsto \interpretation{\Gamma} \vdash x \colon  \interpretation{K} \\  
     \Gamma \vdash \phi \isaformula &\mapsto \interpretation{\Gamma} \vdash \interpretation{\phi} \TTisatype                   
\end{align*}
For a judgment $\mathcal{S}$ in $\LTT_\L$ we write $\interpretation{S}$ for its interpretation in MLTT.
\end{defin}

\begin{remark}
Note that instead of interpreting the empty $\L$-context as $\M \colon \Struc{\L}$ we could have instead interpreted it as the context $\Gamma_\L = (K \colon T_K)_{K \in \ob{\L}}$ as defined in the proof of Theorem \ref{strucwellformed}. The difference is entirely inessential with respect to anything we have to say in the rest of this paper.
\end{remark}


\begin{theo}[Correctness of the Interpretation]\label{FOLDSsoundness}
If $\mathcal{S}$ is a derivable judgment in $\LTT_\L$ then $\interpretation{S}$ is a derivable judgment in MLTT. 
\end{theo}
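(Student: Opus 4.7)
The plan is to proceed by induction on the derivation of $\mathcal{S}$ in $\LTT_\L$, case-splitting on the last rule applied. Since the interpretation is defined uniformly by the shape of the syntax, each rule in $\LTT_\L$ should correspond fairly directly to one (or a short combination) of MLTT rules, with the exception of $\Kform$, which requires a small subsidiary induction tracking the structure of $\Struc{\L}$.

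I would handle the easy cases first. For $\emptycon$, the interpretation $\M \colon \Struc{\L} \isacontext$ is derivable because $\Struc{\L}$ is a closed type by Theorem \ref{strucwellformed}, so context extension gives us the judgment. For $\conext$ and $\weakening$, the induction hypothesis plus MLTT context extension/weakening suffice. For $\axrule$, the interpretation of a context containing $x \colon K$ contains $x \colon \interpretation{K}$, so the MLTT variable rule applies. For the formula formation rules $(\bot\text{-form})$, $(\top\text{-form})$, $(*\text{-form})$ and $(Q\text{-form})$, each translates into a formation rule for $\zerotype$, $\onetype$, $\times/+/\rightarrow$ or $\Sigmatype/\Pitype$ in MLTT respectively, all of which follow from the induction hypothesis together with the assumption that our fragment of MLTT contains the necessary type formers; $(\isaformularule\text{-wk})$ reduces to standard MLTT weakening on types.

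The substantive case is $\Kform$. Here the target judgment is
\[
\interpretation{\Gamma} \vdash \TTElapp{K^\M (x_{\mathbf{p}f})_{f \in \pcosieve{K}{\L}}} \TTisatype,
\]
and the induction hypothesis provides derivations of $\interpretation{\Gamma} \vdash x_f \colon \TTElapp{K_f^\M (x_{\mathbf{p}f})}$ for each $f \in \pcosieve{K}{\L}$. I would first observe that $\M \colon \Struc{\L}$ is in $\interpretation{\Gamma}$, and apply iterated $\Sigmatype$-elimination (or, equivalently, invoke the projection term that defines $K^\M$) to obtain $\interpretation{\Gamma} \vdash K^\M \colon T_K$, where $T_K$ is the $\Pitype$-type from Definition \ref{FOLDSLstructure}. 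Then, exactly as in the inner induction in Lemma \ref{strucwellformedlemma}, I would iterate $\Pitype$-elimination on $K^\M$ using the premise judgments $\interpretation{\Gamma} \vdash x_f \colon \TTElapp{K_f^\M (x_{\mathbf{p}f})}$, proceeding in the order of the proper order on $\L$, to land in $\U$; applying $\Elements$ produces the desired well-formed type.

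The main obstacle is keeping the bookkeeping straight in the $\Kform$ case, specifically ensuring that the argument types produced by unfolding $T_K$ via successive $\Pitype$-eliminations are definitionally equal to the types $\interpretation{K_f (x_{\mathbf{p}f})}$ supplied by the induction hypothesis. This boils down to the definitional equalities of the form $\TTElapp{K^\M(\mathbf{x})} \equiv \Elements(\ttapp(x_n,\ttapp(\dots,\ttapp(x_1,K^\M))))$ built into our notational conventions, together with the coherence of the proper order used to index the variables. Once these are checked (essentially by the same $<$-induction as in Lemma \ref{strucwellformedlemma}), the $\Kform$ case closes and the theorem follows.
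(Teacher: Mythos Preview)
Your proposal is correct and follows essentially the same approach as the paper: induction on derivations, with each $\LTT_\L$ rule validated by the corresponding MLTT rule(s), and the only substantive work occurring in the $\Kform$ case. The one minor difference is that in the $\Kform$ case the paper invokes Lemma~\ref{appearlemma} to conclude that each $x_f \colon K_f(x_{\mathbf{p}f})$ literally appears in $\Gamma$ (hence $x_f \colon \interpretation{K_f(x_{\mathbf{p}f})}$ appears in $\interpretation{\Gamma}$), which lets it avoid the definitional-equality bookkeeping you flag as the main obstacle; your route via the induction hypothesis is equally valid but, as you note, requires checking that the types produced by unfolding $T_K$ match those given by the premises.
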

\begin{proof}
We proceed by induction on the depth of the expressions of $\LTT_\L$. By inspection we can see that the depth function in Definition \ref{interpretationFOLDS} is correct in the sense that for every rule of $\LTT_\L$ the depth of the expression below the line is strictly greater than the depth of any of the expressions above the line. 
The proof thus can proceed by induction on the complexity of derivations of $\LTT_\L$ which here means that we need to show that every rule of $\LTT_\L$ is valid in MLTT under the interpretation in Definition \ref{interpretationFOLDS}. We take them in turn:

\bigskip
\noindent $\emptycon$: We have 
\begin{equation}
\interpretation{\varnothing \isacontext} = \M \colon \Struc{\L} \isacontext
\end{equation}
But by Theorem \ref{strucwellformed} $\Struc{\L}$ is a well-formed type in the empty context which means that $\M \colon \Struc{\L}$ is a well-formed context.

\bigskip
\noindent $\conext$: Given that $\interpretation{\Gamma}$ is a well-formed context and that $\interpretation{\Gamma} \vdash \interpretation{K} \TTisatype$ is derivable, we get immediately that
\begin{equation}
\interpretation{\Gamma}, x \colon \interpretation{K} \isacontext
\end{equation}
is derivable. But $\interpretation{\Gamma, x \colon K} = \interpretation{\Gamma}, x \colon \interpretation{K}$ and so the above sequent is exactly the interpretation of the sequent below the line in $\conext$.

\bigskip 
\noindent  $\weakening$: Follows immediately from the analogous weakening rule in MLTT.

\bigskip 
\noindent  $\axrule$: Follows immediately from the analogous rule in MLTT.

\bigskip
\noindent $\Kform$: By Lemma \ref{appearlemma} we know that if $\Gamma \vdash x_f \colon K_f (x_{\mathbf{p}f})$ is derivable in $TT_\L$ (and hence $\LTT_\L$) then $x_f \colon K_f (x_{\mathbf{p}f})$ must appear in $\Gamma$. Therefore $x_f \colon \interpretation{K_f (x_{\mathbf{p}f})}$ must appear in $\interpretation{\Gamma}$ for every $f \in \pcosieve{K}{\L}$. By the definition of the interpretation $\interpretation{-}$ we know that 
\begin{equation}
\interpretation{K(x_f)_{f \in \pcosieve{K}{\L}}} = \TTElapp{K^\M(x_f)_{f \in \pcosieve{K}{\L}}} 
\end{equation}
Since $K^\M$ is a dependent function into $\U$ of the appropriate type (by the definition of $\Struc{\L}$) and each $x_f$ appears in the context $\interpretation{\Gamma}$ then we know that 
$
\TTapp [K^\M(x_f)_{f \in \pcosieve{K}{\L}}]
$
is a derivable term of $\U$ and therefore that
\begin{equation}
\interpretation{\Gamma} \vdash  \TTElapp{K^\M(x_f)_{f \in \pcosieve{K}{\L}}} \TTisatype
\end{equation}
is derivable. But this last sequent is exactly the interpretation of the bottom line of $\Kform$, as required.

\bigskip
\noindent $\formwk$: Follows immediately from the analogous weakening rule in MLTT.

\bigskip
\noindent $\topform, \botform$: Since $\onetype, \zerotype$ are included in our type theory these follow immediately from the corresponding formation rules since $\interpretation{\Gamma \vdash \top \isaformula} = \interpretation{\Gamma} \vdash \onetype \: \TTisatype$ and similarly for $\bot$.

\bigskip
\noindent $\starform$: For each of $*= \wedge, \vee, \rightarrow$ the $\starform$ rule follows from the corresponding formation rule for the relevant type former. 
For example, given that $\interpretation{\Gamma}$ is a well-formed context and $\interpretation{\phi}$ and $\interpretation{\psi}$ are types in that context we have
\begin{equation}
\interpretation{\Gamma}, x \colon \interpretation{\phi} \vdash \interpretation{\psi} \TTisatype
\end{equation} 
Then by $\Pitype$-formation we get 
\begin{equation}
\interpretation{\Gamma} \vdash \interpretation{\phi} \rightarrow \interpretation{\psi} \TTisatype
\end{equation}
as required. The cases for $\vee$ and $\wedge$ follow analogously using formation rules for $+$ and $\Sigmatype$ respectively.

\bigskip
\noindent $\Qform$: For each of $Q= \exists, \forall$ the $\Qform$ rule follows from the corresponding formation rule for the relevant type former. 
For example, given that $\interpretation{\Gamma}, x \colon \interpretation{K}$ is a well-formed context and $\interpretation{\phi}$ and is a type in that context we have by $\Sigmatype$-formation that
\begin{equation}
\interpretation{\Gamma} \vdash \Sigmatype \: (x \colon \interpretation{K}) \: \interpretation{\phi}\TTisatype
\end{equation} 
as required. The case for $\forall$ and $\wedge$ follows analogously using $\Pitype$-formation.
\end{proof}

\begin{notation}
We will usually want to assume that we are given a specific $\L$-structure $\M$ and speak of contexts and formulas in $\M$. To express this we will use the notation $\Gamma^\M$ and $\phi^\M$ for contexts and formulas. 
We will also use the notation $\interp{A(\mathbf{x})}{\M}$ for sorts, noting that it must be distinguished from the notation $K^\M$ for an object $K \in \L$ which we have already used. The former denotes a type (in context) whereas the latter denotes a term of a certain type (a dependent function into a universe). 
\end{notation}

The following result is now essentially a tautological restatement of Theorem \ref{FOLDSsoundness}.

\begin{cor}\label{FOLDSsoundnesscor}
For any $\L$-structure $\M$ we have:
\begin{enumerate}
\item If $\Gamma$ is an $\L$-context then $\interp{\Gamma}{\M}$ is a well-formed context.
\item If $\phi$ is an $\L$-formula in context $\Gamma$ then $\interp{\phi}{\M}$ is a type in context $\interp{\Gamma}{\M}$.
\item If $A(\mathbf{x})$ is an $\L$-sort in context $\Gamma$ then $\interp{A(\mathbf{x})}{\M}$ is a type in context $\interp{\Gamma}{\M}$.
\end{enumerate}
\end{cor}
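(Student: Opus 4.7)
The plan is to deduce each of the three parts of the corollary as a direct consequence of Theorem \ref{FOLDSsoundness}, modulo a substitution step that replaces the generic structure variable $\M$ by a specific $\L$-structure. First, I would unpack the definitions immediately preceding the corollary: saying ``$\Gamma$ is an $\L$-context'' is by definition the statement that $\Gamma \isacontext$ is derivable in $\LTT_\L$; similarly, an $\L$-sort in context $\Gamma$ is a sort expression $A(\mathbf{x})$ such that $\Gamma \vdash A(\mathbf{x}) \isatypee$ is derivable, and an $\L$-formula in context $\Gamma$ is one for which $\Gamma \vdash \phi \isaformulae$ is derivable. Theorem \ref{FOLDSsoundness} then supplies derivations in MLTT of each of the three interpreted judgments $\interpretation{\Gamma \isacontext}$, $\interpretation{\Gamma \vdash A(\mathbf{x}) \isatypee}$, and $\interpretation{\Gamma \vdash \phi \isaformulae}$.

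Next, I would reconcile the notation. By Definition \ref{interpretationFOLDS} the interpretation sends the empty $\L$-context to the MLTT-context $\M \colon \Struc{\L}$, where $\M$ is a \emph{variable} of type $\Struc{\L}$. The notation $\interp{-}{\M}$ fixed just after Theorem \ref{FOLDSsoundness} is meant to refer to the result of substituting a \emph{specific} term $\M \colon \Struc{\L}$ for this leading variable. Since substitution of a closed term for a variable is admissible in MLTT (standard substitution lemma), each of the derivations supplied by Theorem \ref{FOLDSsoundness} descends under this substitution to a derivation of the judgment in which the leading hypothesis $\M \colon \Struc{\L}$ has been discharged. This yields precisely $\interp{\Gamma}{\M} \isacontext$ for (1), $\interp{\Gamma}{\M} \vdash \interp{A(\mathbf{x})}{\M} \TTisatype$ for (3), and $\interp{\Gamma}{\M} \vdash \interp{\phi}{\M} \TTisatype$ for (2).

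The only genuine point to verify is that the notation $\interp{K}{\M}$ for sorts is compatible with the notation $K^\M$ already in use for objects $K \in \L$: the former is the type expression obtained by the sort-interpretation clause $K(x_{\mathbf{p}f}) \mapsto \TTElapp{K^\M(x_{\mathbf{p}f})}$ with $\M$ a specific structure, whereas $K^\M$ is the corresponding projection term $\TTproj_i(\M)$. Once this is unpacked there is no additional mathematical content beyond Theorem \ref{FOLDSsoundness}; the proof is essentially a one-line appeal to that theorem followed by substitution. The main ``obstacle,'' if one can be said to exist, is purely notational bookkeeping to make sure that the parameterized interpretation and the specialized-to-$\M$ interpretation are kept distinct and properly connected by substitution.
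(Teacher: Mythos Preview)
Your proposal is correct and follows the same approach as the paper, which treats the corollary as ``essentially a tautological restatement'' of Theorem~\ref{FOLDSsoundness} and gives no further argument. Your account is in fact more careful than the paper's: you make explicit the substitution step from the generic variable $\M \colon \Struc{\L}$ to a specific structure, whereas the paper silently absorbs this into the notational convention introduced just before the corollary.
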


\begin{cor}\label{FOLDSsoundnesscor2}
If $\alpha \colon \Gamma \conmorphism \Delta$ is a well-formed $\L$-context morphism then $\alpha^\M \colon \Gamma^\M \conmorphism \Delta^\M$ is a context morphism where $\alpha^\M$ consists of the interpretations of the sequents defining $\alpha$.
\end{cor}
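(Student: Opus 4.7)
The plan is to combine Theorem \ref{FOLDSsoundness} with a short substitution lemma. Unpacking the definitions: if $\Delta \equiv x_1 : K_1, \dots, x_n : K_n$ and $\alpha = (y_1, \dots, y_n)$, then by hypothesis each of the $\LTT_\L$-judgments
$$\Gamma \vdash y_i \colon K_i[y_1/x_1, \dots, y_{i-1}/x_{i-1}]$$
is derivable. Applying Theorem \ref{FOLDSsoundness} to each of these judgments gives derivable MLTT-judgments
$$\interp{\Gamma}{\M} \vdash y_i \colon \interpretation{K_i[y_1/x_1, \dots, y_{i-1}/x_{i-1}]}.$$

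The only remaining step is a substitution lemma asserting that $\interpretation{K[y/x]} \equiv \interpretation{K}[y/x]$ for any $\L$-sort $K$ and variables $x, y$. This I would prove by direct inspection: every $\L$-sort has the form $A(\mathbf{z})$ for some $A \in \ob{\L}$ and list of variables $\mathbf{z}$, with interpretation $\TTElapp{A^\M(\mathbf{z})}$. Substitution in the $\LTT_\L$-expression $A(\mathbf{z})$ merely renames the free variables occurring in $\mathbf{z}$; substitution in the MLTT-expression $\TTElapp{A^\M(\mathbf{z})}$ does exactly the same thing, because $A^\M \eqdef \TTproj_i(\M)$ is closed and substitution therefore commutes with the \ttapp-applications appearing in the definition of $\TTElapp{A^\M(\mathbf{z})}$. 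Iterating this observation through the simultaneous substitution $[y_1/x_1, \dots, y_{i-1}/x_{i-1}]$ yields
$$\interpretation{K_i[y_1/x_1, \dots, y_{i-1}/x_{i-1}]} \equiv \interp{K_i}{\M}[y_1/x_1, \dots, y_{i-1}/x_{i-1}].$$

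Putting the two steps together, we obtain, for each $1 \leq i \leq n$, a derivable MLTT-judgment
$$\interp{\Gamma}{\M} \vdash y_i \colon \interp{K_i}{\M}[y_1/x_1, \dots, y_{i-1}/x_{i-1}],$$
which is precisely the list of typing judgments required for $\alpha^\M = (y_1, \dots, y_n)$ to be a context morphism $\Gamma^\M \conmorphism \Delta^\M$ in MLTT. I do not anticipate any serious obstacle: the substance of the corollary is already contained in Theorem \ref{FOLDSsoundness}, and the substitution lemma is purely formal, since the interpretation of sorts never introduces new binding structure beyond the variables inherited from $\L$-sorts themselves.
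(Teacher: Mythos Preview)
Your proposal is correct and follows essentially the same approach as the paper. The paper's proof simply writes out the interpreted judgments and asserts they form a context morphism, implicitly relying on Theorem \ref{FOLDSsoundness} and silently identifying $\interpretation{K_i[y_1/x_1,\dots]}$ with $K_i^\M[y_1/x_1,\dots]$; you have made that substitution step explicit, which is a welcome bit of extra care but not a different argument.
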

\begin{proof}
If $\alpha$ is given by
\begin{align*}
\Gamma &\vdash y_1 \colon K_1 \\
\Gamma &\vdash y_2 \colon K_2 [y_1/x_1] \\
 &\: \vdots \\
\Gamma &\vdash y_n \colon K_n [y_1/x_1,\dots,y_{n-1}/x_{n-1}]
\end{align*}
then $\alpha^\M$ is given by
\begin{align*}
\Gamma^\M &\vdash y_1 \colon K_1^\M \\
\Gamma^\M &\vdash y_2 \colon K_2^\M [y_1/x_1] \\
 &\: \vdots \\
\Gamma^\M &\vdash y_n \colon K_n^\M [y_1/x_1,\dots,y_{n-1}/x_{n-1}]
\end{align*}
which means exactly that we have a context morphism $\Gamma^\M \conmorphism \Delta^\M$ in MLTT.
\end{proof}



\newcommand{\Extension}[2]{\text{Ext}_{#1}^{#2}}

\begin{defin}[Interpretation of a Sequent in $\M$]
The \textbf{interpretation of a sequent $\Gamma \:\vert\: \phi \seqimplies \psi$} in an $\L$-structure $\M$ is given by 
\begin{equation}
\interp{(\Gamma \:\vert\: \phi \seqimplies \psi)}{\M} \eqdef \Pitype \:(\interp{\Gamma}{\M})\: \interp{\phi}{\M} \rightarrow \interp{\psi}{\M}
\end{equation}
which is a closed type by Corollary \ref{FOLDSsoundnesscor}.
\end{defin}

\begin{defin}[Extension of a Formula]\label{FOLDSextension}
The \textbf{extension} of an $\L$-formula $\Gamma.\phi$ in an $\L$-structure $\M$ is the type 
\[
\Extension{\Gamma}{\M} (\phi) \eqdef \Sigmatype \:(\Gamma^\M)\: \phi^\M
\]
We call a derivable term $\mathbf{a} \colon \Extension{\Gamma}{\M} (\phi)$ a \textbf{realization} of $\Gamma.\phi$ in $\M$.
\end{defin}

\begin{defin}[Satisfaction of a Formula]\label{FOLDSsatisfaction}
Let $\phi$ be a $\L$-formula in context $\Gamma$ and $\M$ an $\L$-structure. We define the \textbf{satisfaction of $\Gamma.\phi$ by $\mathbf{a}$ in $\M$} as follows:
\[
\M \models \phi [\mathbf{a}/\Gamma] \text{ iff  \:$\mathbf{a}$ is a realization of $\Gamma.\phi$ in $\M$}
\]
The case where $\phi$ has no free variables is a special case of the above definition, in which case we write $\M \models \phi$ and say that $\M$ is a \textbf{model} of $\phi$.
Satisfaction for sequents can be defined similarly as follows:
\[
\M \models \Gamma \: \vert \: \phi \seqimplies \psi \text{ iff } \Pitype \: (\Gamma^\M) \: \phi^\M \rightarrow \psi^\M \text{ is inhabited}
\]
We say that $\M$ \textbf{satisfies} a sequent $\sigma$ if $\M \models \sigma$.
We say that $\M$ is a \textbf{model} of a FOLDS $\L$-theory $\T$ if $\M$ satisfies every sequent in $\T$.
\end{defin} 

\def\modelscl{\models_{\text{cl}}}
\def\modelsboth{\models_{(\text{cl})}}

\begin{notation}
If we want to indicate that our semantics is taken in classical MLTT we will write $\modelscl$ and if we want to indicate that what we say applies to both MLTT and classical MLTT we will write $\modelsboth$
\end{notation}

\begin{remark}
Our notion of satisfaction is defined in terms of the notion of the derivability of a certain judgment in type theory. In terms of the categorical semantics of type theory, our notion of satisfaction depends on the (mere) existence or non-existence of a certain section to a certain canonical projection.
\end{remark}

\begin{defin}[Semantic Consequence]\label{FOLDSsemanticconsequence}
For any FOLDS $\L$-theory $\T$ and $\L$-sequent $\tau$ we write $\T \modelsboth \tau$ for the statement that for every (classical) $\L$-structure $\M$ if $\M \modelsboth \sigma$ for all $\sigma$ in $\T$ then $\M \modelsboth \tau$, in which case we say that $\tau$ is a \textbf{semantic consequence} of $\sigma$.
\end{defin}


\begin{theo}[Soundness]\label{soundness}
Let $\T$ be a FOLDS $\L$-theory. If $\T \FOLDSentailsboth \tau$ then $\T \modelsboth \tau$.
\end{theo}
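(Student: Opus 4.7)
The plan is to proceed by induction on the derivation of $\tau$ from a finite subset of $\T$ in $\DFOLDS$ (resp.\ $\DFOLDScl$), showing that each rule of the deductive system is sound: if $\M$ satisfies every sequent above the line, then $\M$ satisfies the sequent below the line. Equivalently, for each rule we must exhibit a closed term of the type interpreting the conclusion, given closed terms of the types interpreting the premises. Since by Corollary \ref{FOLDSsoundnesscor} every well-formed $\L$-judgment is interpreted as a well-formed MLTT judgment, the rules we need to validate are straightforward type-theoretic constructions.

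Before tackling the rules themselves, I would state and prove a \textbf{substitution lemma}: for any context morphism $\alpha \colon \Gamma \conmorphism \Delta$ and any $\L$-formula $\phi$ (resp. $\L$-sort $K$) in context $\Delta$, the interpretation of the substituted expression agrees with the result of substituting the interpreted context morphism, i.e.\ $\interp{\alpha(\phi)}{\M} \equiv \interp{\phi}{\M}[\alpha^\M/\mathbf{x}]$ and similarly for sorts. This follows by straightforward structural induction on $\phi$ using the definition of $\interpretation{-}_f$ and Corollary \ref{FOLDSsoundnesscor2}, and is the technical backbone for validating (Sub).

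With this in hand, each rule is handled by a canonical type-theoretic construction. For the structural rules: (iden) is witnessed by $\lambda x.x$; (Sub) reduces via the substitution lemma to reindexing the inhabitant of $\interp{\Delta \v \phi \seqimplies \psi}{\M}$ along $\alpha^\M$; (Cut) uses function composition; (Con-wk) and (Con-exch) reduce to the corresponding admissible operations on MLTT contexts. For the logical rules: $(\top)$ is witnessed by $\lambda\_.\, *$; $(\bot)$ by the eliminator of $\zerotype$; $(\exists\top)$ by $\lambda p.(\TTproj_1(p), *)$; the rules for $(\wedge), (\vee), (\rightarrow)$ are witnessed by the introduction/elimination terms for $\times$, $+$, $\Pitype$ respectively, read off directly since these rules double-lined so we must go in both directions; $(\forall)$ and $(\exists)$ correspond to the bijective currying/uncurrying for $\Pitype$- and $\Sigmatype$-types, which are definitional isomorphisms in MLTT. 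The (Frob) and (Dist) rules, when needed, are witnessed by explicit $\lambda$-terms built from pairing and case analysis. Finally, (LEM) in $\DFOLDScl$ is validated by the inhabitant of $\Pitype(A\colon\U)\, \Elements(A) + (\Elements(A)\to\zerotype)$ that we postulate in classical MLTT.

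The step I expect to require the most care is (Sub), since it is the only rule where the conclusion is not simply built from the premises by a type former but involves a genuine substitution of terms into a dependent type expression. All the bookkeeping (the fact that $\interpretation{-}$ commutes with the formation of sort applications $K(\mathbf{x})$, and that well-formedness of context morphisms transfers, as in Corollary \ref{FOLDSsoundnesscor2}) is encapsulated in the substitution lemma, so once that lemma is established the rest of the induction is essentially a table of canonical type-theoretic terms. The conclusion $\T \modelsboth \tau$ then follows because any model $\M$ of $\T$ provides inhabitants of the interpretations of all finitely many sequents used in the derivation, and the induction builds an inhabitant of $\interp{\tau}{\M}$.
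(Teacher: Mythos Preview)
Your proposal is correct and follows essentially the same approach as the paper: induction on derivations, with each rule validated by exhibiting a canonical MLTT term. The paper's own proof is in fact much terser than yours---it works out only the (iden) case in detail and then waves at the rest---so your explicit identification of (Sub) as the delicate case, and your isolation of a substitution lemma to handle it, is an improvement in rigor rather than a different strategy.
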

\begin{proof}
The proof proceeds without difficulties by induction on the complexity of $\DFOLDS$-derivations since we already know that the interpretation of an $\L$-sequent is a closed type in MLTT. 
For example, for
\[
\inferrule
{
\!
}
{
\Gamma \: \vert \: \phi \seqimplies \phi
}
\quad \idenrule
\]
we know that $(\Gamma \: \vert \: \phi \seqimplies \phi)^\M \equiv \Pitype \:(\Gamma^\M) \: \phi^\M \rightarrow \phi^\M$ and
\[
\vdash \Pitype \:(\Gamma^\M) \: \phi^\M \rightarrow \phi^\M \TTisatype
\]
To show that (iden) is sound we therefore need to show that $\Pitype \:(\Gamma^\M) \: \phi^\M \rightarrow \phi^\M$ is inhabited, i.e. that there is a derivation of a term of that type. In a somewhat abbreviated form this derivation goes as follows
\[
\inferrule
{
\inferrule{\Gamma^\M \vdash \phi^\M \TTisatype}
	{\inferrule{\Gamma^\M \vdash \phi^\M \rightarrow \phi^\M \TTisatype}
		{\vdash \Pitype \: (\Gamma^\M) \: \phi^\M \rightarrow \phi^\M  \TTisatype}
		\quad (\text{$\Pitype$-form})
	}
	\quad (\text{$\Pitype$-form), (wkg)}
}
{
\vdash \lambda \mathbf{x} \colon \Gamma^\M. (\lambda y. y) \colon \Pitype \: (\Gamma^\M) \: \phi^\M \rightarrow \phi^\M
}
\quad (\text{$\Pi$-intro})
\]
Similar derivations work for the rest of the structural rules of $\DFOLDS$. 
The soundness of the logical rules of $\DFOLDS$ follow straightforwardly by the introduction rules of the relevant type formers. We omit the (well-known) details.
\end{proof}

Finally we want to be able to define the ``type of models'' of an $\L$-theory. But to do so we need to be able to list the sequents that comprise the theory within MLTT, a qualification made precise by the following definition.

\begin{defin}
A FOLDS $\L$-theory $\T = \lbrace \sigma_i \rbrace_{i\in \mathbb{N}}$ is \textbf{internalizable} if there is a family of types
\[
i \colon \mathbb{N}, \M \colon \Struc{\L} \vdash \T_i (\M) \TTisatype
\] 
such that $\T_i (\M) \equiv (\sigma_i)^\M$.
\end{defin}

\begin{remark}
Every finite theory will be internalizable in MLTT since we can write out the list of sequents ``by hand''. Infinite theories will be internalizable to the extent that $\LTT_\L$ can be internalized in MLTT.
\end{remark}

\begin{defin}[Type of Models]\label{FOLDStypeofmodels}
For an internalizable theory $\T$, we define the \textbf{type of models} of $\T$ as
\[
\Mod{\T} \eqdef \Sigmatypecom{\M \colon \Struc{\L}}{\Pitypecom{i \colon \mathbb{N}}{\sigma_i^\M}}
\]
\end{defin}

%

\section{Syntax of $\FOLiso$}\label{nlogSyntax}

We will now introduce the syntax of $\FOLiso$ as an extension of the syntax of FOLDS described in Section \ref{prelim}. 
The signatures of $\FOLiso$ will be given by inverse categories $\L$ with extra structure.
This extra structure consists of an assignment of a number to each object of $\L$ corresponding intuitively to its $h$-level, as well as allowing for certain specified objects and arrows encoding equality, reflexivity and transport (along equalities).
Given these signatures, the contexts, formulas, sequents etc. of $\FOLiso$ can be defined just as in Section \ref{prelim} by slight modifications that take into account the extra structure on the relevant signatures.


As before, we assume fixed countably infinite disjoint sets $O$ and $M$.

\begin{defin}[Logical Sorts and Arrows]\label{symbols}
We define by mutual induction two disjoint sets of symbols:
\begin{align*}
\Ob (O,M) \quad \quad K &::=  A  \:\vert\: \eqsort_K \:\vert\: \rsort_K \:\vert\: \tsort_f  \quad \quad \quad \quad  \quad \quad \quad \:\:\:\:  (A \in O)\\
\Mor (O,M) \quad \quad f & ::= h \:\vert\: s_K \:\vert\: t_K \:\vert\: \rmor_K \:\vert\: \tmorX{f} \:\vert\: \tmorXX{f} \:\vert\: \tmorpath{f} \quad \quad \quad \quad (h \in M)
\end{align*}
\end{defin}

\begin{notation}
We will drop explicit mention of $O,M$ and write simply $\Ob$ and $\Mor$ for $\Ob (O,M)$ and $\Mor (O,M)$, with the understanding, as before, that everything we say below is parametrized by our choice of $O$ and $M$.
\end{notation}


\begin{defin}[$h$-signatures]\label{hsignatures}
An \textbf{$h$-signature} is a pair $(\L,h)$ where:
\begin{itemize}
\item $\L$ is a FOLDS signature over $(\Ob, \Mor)$ 
\item $h$ is a function $h \colon \ob{\L} \rightarrow \Natinfty$ 
\end{itemize}
such that:
\begin{enumerate}
\item If $\eqsort_K \in \L$ then we have: 
\begin{enumerate}
\item $K \in \L$ with $h(K)\geq 2$
\item $\toplevel{\eqsort_K} = \lbrace s_K, t_K \rbrace = \L(\eqsort_K,K)$ 
\item $f \catcomp s_K = f \catcomp t_K$ for any $f \colon K \rightarrow K'$
\item $h(\eqsort_K) = h(K) - 1$
\end{enumerate}
\item If $\rsort_K \in \L$ then we have: 
\begin{enumerate}
\item $K \in \L$ with $h(K) \geq 2$ 
\item $\toplevel{\rsort_K} = \lbrace \rmor_K \rbrace = \L (\rsort_K, \eqsort_K)$ 
\item $s_K \catcomp \rmor_K = t_K \catcomp \rmor_K$
\item $h(\rsort_K)=h(K)-2$
\end{enumerate}
\item If $\tsort_f \in \L$ then we have:
\begin{enumerate}
\item $f \in \L(A,K)$ for some $A,K \in \L$ with $h(K) \geq 3$ and $f \in \toplevel{A}$
\item $\toplevel{\tsort_f}= \lbrace \tmorpath{f}, \tmorX{f}, \tmorXX{f} \rbrace$ with $\L(\tsort_f, \eqsort_K)=\lbrace \tmorpath{f} \rbrace$ and $\L (\tsort_f, A)= \lbrace \tmorX{f}, \tmorXX{f} \rbrace$
\item $s_K \catcomp \tmorpath{f} = f \catcomp \tmorX{f}$ and $t_K \catcomp \tmorpath{f} = f \catcomp \tmorXX{f}$
\item $g \catcomp \tmorX{f} = g \catcomp \tmorXX{f}$, for all $g \in \pcosieve{A}{\L} \setminus \lbrace f \rbrace$
\item $q \catcomp f \catcomp f_1 = q \catcomp f \catcomp f_2$, for all $q \in \pcosieve{K}{\L}$
\item $h(\tau_f) = h (A) -1$
\end{enumerate}
\item If $h(K)=0$ then $K \equiv \rsort_A$ for some $A$ in $\L$

In all the above conditions involving $h$ we stipulate that $\infty - m = \infty$.
\end{enumerate}
\end{defin}

\begin{terminology}
For an $h$-signature $(\L,h)$ we call $h(K)$ the \emph{$h$-level} of $K$. The \emph{height} $\Heightof{\L,h}$ of $(\L,h)$ is the maximum $h$-level in $\L$, i.e.
\[
\Heightof{\L,h} = \underset{K \in \obL}{\text{sup}} h(K)
\]
We call $\eqsort_K$ the \emph{isomorphism sort} of $K$, $\rsort_K$ the \emph{reflexivity sort} (or \emph{predicate}) on $\eqsort_K$ and for $f \colon A \rightarrow K$ we call $\tsort_f$ the \emph{transport structure of $A$ in position $f$ (along the path picked out by $\tmorpath{f}$}). 
We call $s_K$ and $t_K$ the \emph{source} and \emph{target} maps (for equality ``paths''). 
We call all these symbols \emph{logical} and any symbol that is not of this kind \emph{non-logical}.
For a given inverse category $\L$ over $(\Ob, \Mor)$ we will write $\NL{\L}$ for the set of non-logical sorts of $\L$ and we will refer to such an $\L$ with a finite $\NL{\L}$ as \emph{essentially finite}.
\end{terminology}

\begin{remark}\label{hsigexplanation}
We now illustrate the idea behind each of the conditions in Definition \ref{hsignatures} at some length, anticipating the semantics in Section \ref{nlogSemantics}.

The function $h \colon \ob{\L} \rightarrow \mathbb{N}_\infty$ is to be understood as picking out the $h$-level of the dependent sort it is attached to, in the sense that a given sort with $h(K)=m$ is to be understood as a family of types of $h$-level $m$ dependent on the sorts in $\pcosieve{K}{\L}$ in the appropriate way.

The isomorphism sorts are to be understood as the identity types/path spaces of (the family defined by) a sort (at each of its instances). 
The conditions on an isomorphism sort $\eqsort_K$ ensure that it appears in an $h$-signature only in the following form
\[
\xymatrix{
m-1 &\eqsort_K \ar@/_/[d]_{s_K} \ar@/^/[d]^{t_K} \\
m    &K 
}
\]
where $m \geq 2$ because we only want to have an isomorphism sort over types that are at least sets (i.e. at least of $h$-level 2). 
The idea is that the identity type on a type $K$ is always of $h$-level one less than the $h$-level of $K$ (if that $h$-level is known to be finite) and that there can be no more arrows with domain $\eqsort_K$ since the identity type depends only on $K$ (this is what the condition $\toplevel{\eqsort_K}=\lbrace s_K, t_K \rbrace$ achieves). Finally given any arrow $f \in \pcosieve{K}{\L}$ as in
\[
\xymatrix{
m-1 &\eqsort_K \ar@/_/[d]_{s_K} \ar@/^/[d]^{t_K} \\
m    &K \ar@{-->}[d]^{f} \\
n       &K_f
}
\]
the condition $fs_K=ft_K$ ensures that to declare a variable $p \colon x \eqsort_K y$ then $x$ and $y$ must have the same dependency, as is to be expected.

The reflexivity predicates are to be understood as a type family picking out the reflexivity path from the identity type ``below'' them, i.e. as the identity type of paths identical to reflexivity. 
The conditions on a reflexivity predicate $\rsort_K$ ensure that it appears in an $h$-signature only in the following form
\[
\xymatrix{
m-2 &\rsort_K \ar[d]^{r_K} \\
m-1 &\eqsort_K \ar@/_/[d]_{s_K} \ar@/^/[d]^{t_K} \\
m    &K 
}
\]
with the condition $s_Kr_K=t_Kr_K$ ensuring (as in $\Lrg$) that we can only ask of a path if it is reflexivity if we know that it is a loop.

The transport structure is to be understood as a (functional) relation relating a term of a type to its transport along a path.
The conditions on a transport relation $\tau_f$ ensure that it appears in an $h$-signature only in the following form
\[
\xymatrix{
m-1 & & &\tsort_f \ar@/_/[ld]_{\tmorX{f}} \ar@/^/[ld]^{\tmorXX{f}} \ar[rd]^{\tmorpath{f}} \\
m  & &A \ar@{-->}[ld]_g \ar[rd]_{f}  & &\eqsort_O \ar@/_/[ld]_{s_K} \ar@/^/[ld]^{t_K}\\
n   &K &   &O
}
\]
with the conditions ensuring that the ``path'' $e_f$ has as source the ``point'' $f\tmorX{f}$ and as target the ``point'' $f\tmorXX{f}$ (condition 3.(c)), that the ``points'' $\tmorX{f}$ and $\tmorXX{f}$ belong to types  differing only in ``position'' $f$ (condition 3.(d)) and that the ``points'' $f\tmorX{f}$, $f\tmorXX{f}$ are ``points'' of the same type (condition 3.(e)). Example \ref{Lrgtransport} offers a more detailed illustration of these conditions. Semantically they will allow us to define each $\tau_f$ as the transport function induced by a path on $O$ for a type family $A$ over $O$.

Finally, the fact that we do not allow non-logical sorts to be contractible (i.e. of $h$-level $0$) is because we have no use for families of contractible types (over some other type). 
The fact that we do not allow isomorphism sorts and transport structure to be contractible is because we have no use for contractible identity types since, as we said above, we want to have isomorphism sorts and transport structure over types that are at least of $h$-level $2$.
On the other hand, the fact that we \emph{do} allow reflexivity sorts to be of $h$-level 0 is, first, because we do have a use for stating the inhabitation of an isomorphism sort of $h$-level 1 and indeed because the reflexivity predicate for such an isomorphism sort will, semantically, correspond to a contractible type and, second, because making this (somewhat artificial expression) allows for a more uniform presentation of the deductive system $\Diso$ below since it will allow us to state a single ``$\Idtype$-introduction'' rule.
\end{remark}

\begin{notation}
We will usually suppress explicit mention of $K$ in the $s$, $t$ and $\rmor$ maps.
We will also often drop explicit mention of $h$ in $(\L,h)$ and refer to an $h$-signature simply by its associated  inverse category $\L$. 
When displaying an $h$-signature we will usually write the $h$-level of each sort on the left of the displayed category, as illustrated by the examples below, and we will usually omit writing out reflexivity predicates of $h$-level $0$.
\end{notation}


\begin{exam}
The $h$-signature $\L_{2,1} = (\Lgraph, h_{2,1})$ is defined as follows
\[
\xymatrix{
1 &A \ar@/_/[d] \ar@/^/[d]\\
2  &O
}
\]
Its underlying FOLDS signature is $\Lgraph$. Note that $\L_{2,1}$ is not, for example, isomorphic (as we make more precise below) to the $h$-signature $\L_{\infty,1} = (\Lgraph, h_{\infty,1})$ given by
\[
\xymatrix{
1 &A \ar@/_/[d] \ar@/^/[d]\\
\infty  &O
}
\]
The $h$-level, in other words, is extra structure on FOLDS signatures. 
There will be non-isomorphic $h$-signatures whose underlying FOLDS signatures are isomorphic.
\end{exam}

\begin{exam}
Similarly, $\Lrg$ is the underlying FOLDS signature of several $h$-signatures. The following $h$-signature 
\[
\xymatrix{
1 &I \ar[d]^{i} \\
2 &A \ar@/^/[d]^{d} \ar@/_/[d]_{c} \\
\infty &O
}
\]
would be appropriate (semantically) for talking about reflexive graphs on general types (i.e. types of any $h$-level). If we want to talk about reflexive graphs on $h$-sets then it would be appropriate to let $h(O)= 2$, i.e. to let $O$ be of $h$-level $2$.
\end{exam}

\begin{exam}\label{globLrg}
We can define $\glob{\Lrg}$ as the following $h$-signature
\[
\xymatrix{
0 & & \rsort_A \ar[d]_{\rmor_A} &\rsort_O \ar[d]_{\rmor_{\eqsort_O}} \\
1 &I \ar[d]^{i} &\eqsort_A \ar@/^/[ld]^{s_A} \ar@/_/[ld]_{t_A} &\eqsort_{\eqsort_O} \ar@/^/[d]^{s_{\eqsort_O}} \ar@/_/[d]_{t_{\eqsort_O}} &\rsort_O \ar@/^20pt/[ld]^{\rsort_O}\\
2 &A \ar@/^/[d]^{d} \ar@/_/[d]_{c} & &\eqsort_O \ar@/^/[lld]^{s_O} \ar@/_/[lld]_{t_O} \\
3 &O
}
\]
subject to the usual relation on $\Lrg$.
By definition it is also subject to the following relations:
\begin{align*}
s_O \circ s_{\eqsort_O} = s_O \circ t_{\eqsort_O} &, t_O \circ s_{\eqsort_O} = t_O \circ t_{\eqsort_O} \\
t_O \circ \rmor_O = s_O \circ \rmor_O &, c \circ t_A = c \circ s_A \\
t_{\eqsort_O} \circ \rho_O = s_{\eqsort_O} \circ \rmor_{\eqsort_O}&, t_A \circ \rmor_A = s_A \circ \rmor_A \\
d \circ t_A &= d \circ s_A \\
\end{align*}
This signature can be thought of as $\Lrg$ ``completed with respect to equality'' according to its $h$-level, i.e. we have added those equality and reflexivity sorts that the $h$-level of the non-logical sorts determines is non-trivial.
We will make this ``completion'' process more precise below, through the ``globular completion monad'' in Definition \ref{globcompmonad}.
\end{exam}

\begin{exam}
A crucial feature of our formalism is that it allows us to express properties and impose structure on the isomorphism sorts themselves.
Semantically this will allow us to be able to express axioms and therefore define theories satisfying conditions relating to the inhabitants of isomorphism sorts (e.g. that they are in bijective correspondence with ``isomorphisms'' as defined over $\Lcat$).
For example, consider the following $h$-signature, extending $\Lrg^{\cong}$
\[
\xymatrix{
1 &I \ar[d]^{i} &\eqsort_A \ar@/^/[ld]^{s_A} \ar@/_/[ld]_{t_A} &\eqsort_{\eqsort_O} \ar@/^/[d]^{s_{\eqsort_O}} \ar@/_/[d]_{t_{\eqsort_O}} &\rsort_O \ar@/^20pt/[ld]^{\rsort_O} & P \ar@/^35pt/[lld]^{p}\\
2 &A \ar@/^/[d]^{d} \ar@/_/[d]_{c} & &\eqsort_O \ar@/^/[lld]^{s_O} \ar@/_/[lld]_{t_O} \\
3 &O
}
\]
Intuitively, we have added a one-place predicate on the (first-level) isomorphism sort of $O$. Extensions of $h$-signatures that already contain isomorphism sorts thus allow us to express properties and impose structure on the isomorphism sorts themselves. 
\end{exam}

\begin{exam}\label{Lrgtransport}
The following $h$-signature is useful for expressing properties of a ``reflexive graph'' on a groupoid $O$ together with properties of the ``transported'' sets of edges on any two points of the groupoid connected by a path:
\[
\xymatrix{
1 & & &I \ar[d]_{i}   &\tsort_d \ar@/^/[ld]_{d_1} \ar@/^25pt/[ld]_{d_2} \ar@/_/[rd]^{\tmorpath{d}}  \\
2& & & A \ar@/_/[d]_{d} \ar@/^/[d]^{c} & &\eqsort_O \ar@/^/[lld]^{s_O} \ar@/_/[lld]_{t_O} \\
\infty & & & O \\
}
\]
In short, this is the signature for reflexive graphs with transport. 
As described in Remark \ref{hsigexplanation}, note that the relation $cd_1=cd_2$ ensures that we are ``transporting'' between ``types'' that \emph{only} (possibly) differ in the ``domain'' position $d$ but do not differ in the ``codomain'' position $c$, i.e. we can think of $\tsort_d$ as defining a function(al relation)
\[
A(x,z) \rightarrow A(y,z)
\]
for the ``type family''
$
A(-,z) \colon O \rightarrow \U
$
\end{exam}

\begin{exam}\label{Lprecateg}
The following $h$-signature $\Lprecat$ is useful for formalizing the theory of precategories:
\[
\xymatrix{
1 &\circ \ar@/^5pt/[rd]^{t_0} \ar[rd]_{t_1} \ar@/_20pt/[rd]_{t_2} & I \ar[d]^{i} & \eqsort_A \ar@/_/[ld]_{s_A} \ar@/^/[ld]^{t_A} \\
 2 & & A \ar@/_/[d]_{d} \ar@/^/[d]^{c} \\
 \infty & & O \\
}
\]
where we have the relations of $\Lcat$.
\end{exam}

\begin{exam}\label{Lucatcanonical}
The following $h$-signature $\Lucat$ is useful for formalizing the theory of univalent categories:
\[
\xymatrix{
1 &\circ \ar@/_5pt/[rrd]^{t_0} \ar@/_10pt/[rrd]_{t_1} \ar@/_25pt/[rrd]_{t_2} & I \ar@/_5pt/[rd]^{i}  &\eqsort_A \ar@/^/[d] \ar@/_/[d] &U \ar@/^/[ld]_{u_1} \ar@/_/[rd]_{u_2} &\eqsort_{\eqsort_O} \ar@/^/[d]^{s} \ar@/_/[d]_{t}  &r_O \ar@/^20pt/[ld]^{\rmor} \\
2& & & A \ar@/_/[d]_{d} \ar@/^/[d]^{c} & &\eqsort_O \ar@/^/[lld]^{s} \ar@/_/[lld]_{t} \\
 3 & & & O \\
}
\]
where we have the same relations as $\Lprecat$ together with the relation
\[
t \circ u_1 = s \circ u_2
\]
The sort $U$ can be thought of as allowing us to express a function between paths and arrows, and indeed to express the defining axiom for univalent categories.
(The same could also be accomplished through the transport relation $\tau_{s_O}$.)
\end{exam}

%

\begin{defin}[Category of $h$-signatures]\label{hsigcategory}
We let $\hSig$ be the category whose objects are the $h$-signatures and whose morphisms $I \colon (\L,h) \rightarrow (\L',h')$ are given by $\FOLDS(\Ob,\Mor)$-morphisms $I \colon \L \rightarrow \L'$ that  satisfy the additional condition $h(K) \leq h'(I(K))$.
\end{defin}

\begin{terminology}
We will refer to morphisms in $\hSig$ as \emph{$h$-morphisms}.
\end{terminology}

\begin{remark}
The condition $h(K) \leq h'(I(K))$ is imposed with the semantic fact that the $h$-level is ``upwards cumulative'' (i.e. that a type of $h$-level $n$ is also a type of $h$-level $m \geq n$) but not the other way around (i.e. a type of $h$-level $n$ is not necessarily a type of $h$-level $m \leq n$). 
\end{remark}


\begin{defin}\label{Gfunctor}
We define $G \colon \hSig \rightarrow \hSig$ to be the following functor: 
\begin{itemize}
\item On objects $G$ takes $(\L,h) \in \hSig$ to the $h$-signature $G(\L,h) = (G(\L),G(h))$ with 
\[
\ob{G(\L)} = \ob{\L} \cup \setdefinition{\eqsort_K}{K \in \L, h(K) \geq 2} \cup \setdefinition{\rsort_K}{K \in \L, h(K) \geq 2}
\]
\item On arrows, given $I \colon (\L,h) \rightarrow (\L',h')$ we define $G(I) \colon G(\L) \rightarrow G(\L')$ as the following functor:
\begin{itemize}
\item On objects we have $G(I)\vert_{\ob{\L}} = I$ and whenever $\eqsort_K, \rsort_K \in \ob{G(\L)} \setminus \ob{\L}$ we set $G(I)(\eqsort_K) \eqdef \eqsort_{I(K)}$ and $G(I)(\rsort_K)=\rsort_{I(K)}$.
\item On arrows we once again have $G(I)\vert_{\ob{\L}} = I$ and whenever $s_K, t_K, \rmor_K \in \morph{G(\L)} \setminus \morph{\L}$ we set $G(I)(s_K) = s_{I(K)}$, $G(I)(t_K) = t_{I(K)}$  and $G(I)(\rsort_K)=\rsort_{I(K)}$. 
\end{itemize}
\end{itemize}
\end{defin}

\begin{lemma}\label{Gwelldefined}
$G$ is well-defined.
\end{lemma}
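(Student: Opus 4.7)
The plan is to verify three things: that $G$ sends $h$-signatures to $h$-signatures, that it sends $h$-morphisms to $h$-morphisms, and that it respects identities and composition.

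For the action on objects, I would first verify that the inverse category $G(\L)$ is well-formed. The added objects $\eqsort_K$ and $\rsort_K$ use symbols from $\Ob$ per Definition \ref{symbols}, and the implicitly added arrows $s_K, t_K, \rmor_K$ come from $\Mor$. No non-identity endomorphisms are introduced since every new arrow strictly decreases the level of its codomain; skeletality is preserved by construction; and finite fan-out holds because each new sort has only finitely many new arrows out of it. A proper order on $G(\L)$ is obtained by extending the one on $\L$, placing each $\eqsort_K$ immediately above $K$ and each $\rsort_K$ immediately above $\eqsort_K$. I would then verify each of the conditions 1--4 of Definition \ref{hsignatures} for the added sorts: for $\eqsort_K$, conditions 1(a)--(d) hold directly by construction ($K \in \L$ with $h(K) \geq 2$ is exactly when we add $\eqsort_K$; $\lbrace s_K, t_K \rbrace$ are the only top-level arrows out of $\eqsort_K$; the relation $f s_K = f t_K$ is imposed as part of the construction; and $G(h)(\eqsort_K) \eqdef h(K) - 1$). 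Conditions 2(a)--(d) for $\rsort_K$ are analogous. Conditions 3(a)--(f) are inherited from $\L$, since $G$ adds no new transport structure, and any pre-existing $\tsort_f$ already had its $\eqsort_{K_f}$ present in $\L$. Condition 4 is preserved: any new sort of $h$-level $0$ must be a $\rsort_K$ with $h(K) = 2$, which is a reflexivity sort by definition.

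For the action on morphisms, I would check that $G(I)$ is a well-defined functor landing in $G(\L')$. The crucial consistency check is that the newly assigned values lie in $G(\L')$: when $\eqsort_K$ is added to $G(\L)$ we have $h(K) \geq 2$, and by the $h$-level condition on $I$ we get $h'(I(K)) \geq h(K) \geq 2$, so $\eqsort_{I(K)}$ is indeed an object of $G(\L')$, and similarly for $\rsort_{I(K)}$. Functoriality on composable pairs involving the new arrows reduces to checking compatibility such as $G(I)(f) \circ G(I)(s_K) = I(f) \circ s_{I(K)}$, which is immediate from the assignments and the imposed relations. That $G(I)$ is order-preserving follows from the canonical extension of the order. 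The $h$-level inequality for $G(I)$, e.g.\ $G(h)(\eqsort_K) \leq G(h')(\eqsort_{I(K)})$, unfolds to $h(K) - 1 \leq h'(I(K)) - 1$, which follows from $h(K) \leq h'(I(K))$ using the convention $\infty - 1 = \infty$; the case of $\rsort_K$ is analogous with $h(K) - 2$.

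Finally, functoriality of $G$ itself is immediate by inspection: $G(1_\L) = 1_{G(\L)}$ since $G(1_\L)$ fixes all added sorts and arrows by construction, and $G(J \circ I) = G(J) \circ G(I)$ is checked separately on $\ob{\L}$ (where both sides restrict to $J \circ I$) and on the newly added sorts (where both sides send $\eqsort_K$ to $\eqsort_{J(I(K))}$, and similarly for $\rsort_K$ and the arrows $s, t, \rmor$). The main obstacle I anticipate is the bookkeeping around the proper order on $G(\L)$ and ensuring that all the fresh symbols behave consistently with the relations inherited from $\L$; in particular, one must check that imposing $f s_K = f t_K$ for every $f \in \pcosieve{K}{\L}$ does not clash with or collapse any existing arrows of $\L$, which follows from the freshness of the newly introduced symbols in $\Ob \setminus \ob{\L}$ and $\Mor \setminus \morph{\L}$.
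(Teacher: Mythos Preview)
Your proof is correct and follows essentially the same approach as the paper, though far more thoroughly. The paper's proof merely notes that $G(\L,h)$ is uniquely determined by the constraints of Definition~\ref{hsignatures} and then checks the $h$-level inequality $G(h)(\eqsort_K) = h(K) - 1 \leq h'(I(K)) - 1 = G(h')(\eqsort_{I(K)})$ for the newly added sorts, leaving everything else (functoriality, well-formedness of the underlying inverse category, that images land in $G(\L')$, etc.) implicit.
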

\begin{proof}
Firstly, we note that $G(\L,h)$ is uniquely determined for any $(\L,h)$ since by Definition \ref{hsignatures} the objects $\eqsort_K$ and $\rsort_K$ can only appear in an $h$-signature in a unique way and $G(\L)$ contains no more non-logical sorts than $\L$ and their $h$-level is determined by the $h$-level of their associated $K$.
It remains to check that for $I \colon (\L,h) \rightarrow (\L',h')$ we have that $G(I) \colon G(\L) \rightarrow G(\L')$ is indeed an $h$-morphism, i.e. that $I$ does not decrease $h$-level. 
For any $K \in \L$ this is immediate since $I$ is assumed to be an $h$-morphism.
Now let $\eqsort_K \in \ob{G(\L)} \setminus \ob{\L}$. Then we have
\[
\glob{h} (\eqsort_K) = h(K) -1 \leq h'(I(K)) -1 = \glob{h'}(\eqsort_{I(K)})
\]
The same argument works for $\rsort_K$.
\end{proof}

\begin{exam}
Let $\Lrg$ be as in Example \ref{Lrgcanonical}. Then the action of $G$ on $\Lrg$ is given by
\[
\xymatrix{
&\Lrg & &\longmapsto & & &G(\Lrg) \\
1 &I \ar[d]_{i} & & & &1 &I \ar[d]^{i} &\eqsort_A \ar@/^/[ld]^{s_A} \ar@/_/[ld]_{t_A} &\rsort_O \ar[ld]^{\rsort_O} \\
2 &A \ar@/^/[d]^{d} \ar@/_/[d]_{c} & &\longmapsto & &2 &A \ar@/^/[d]_{d} \ar@/_/[d]_{c} &\eqsort_O \ar@/^15pt/[ld]^{s_O} \ar@/_/[ld]_{t_O} \\
3 &O & & & &3 &O
}
\]
\end{exam}

\begin{defin}[Globular Completion Monad]\label{globcompmonad}
We define the \textbf{globular completion monad} as the triple $\Gmonad = \langle G, \mu_G, \eta_G \rangle$ where:
\begin{itemize}
\item $G$ is the functor defined in Definition \ref{Gfunctor}
\item $\mu_G (\L,h)$ is the ``contraction'' $h$-morphism that takes 
\begin{align*}
\rsort_{\rsort_K} \:\:  &\mapsto \:\: \rsort_K \\
\eqsort_{\rsort_K} \:\: &\mapsto \:\: \rsort_K \\
\rsort_{\eqsort_K} \:\: &\mapsto \:\: \eqsort_K \\
\eqsort_{\eqsort_K} \:\: &\mapsto \:\: \eqsort_K
\end{align*}
\item $\eta_G (\L,h)$ is the inclusion $(\L,h) \hookrightarrow G(\L,h)$
\end{itemize}
\end{defin}

\begin{lemma}
$\Gmonad$ is a monad.
\end{lemma}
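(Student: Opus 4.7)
The plan is to verify three things in order: (a) that $\eta_G(\L,h)$ and $\mu_G(\L,h)$ really are morphisms in $\hSig$; (b) that $\eta_G$ and $\mu_G$ are natural in $(\L,h)$; and (c) that the unit and associativity axioms for a monad hold. For (a), the unit $\eta_G(\L,h)$ is the inclusion $(\L,h)\hookrightarrow G(\L,h)$, which is automatically full, faithful, level-preserving and preserves $h$-level on the nose. For the multiplication $\mu_G(\L,h)\colon G^2(\L)\to G(\L)$, Definition~\ref{globcompmonad} specifies the action on the four ``doubled-logical'' sorts that make up $\ob{G^2(\L)}\setminus\ob{G(\L)}$; on the remaining sorts of $G(\L)\subset G^2(\L)$ we set $\mu_G$ to be the identity. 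The action on morphisms is then forced: each new arrow ($s$, $t$ or $\rmor$) in $G^2(\L)\setminus G(\L)$ connects two sorts that $\mu_G$ identifies, so since $G(\L)$ is an inverse category (and hence has only identity endomorphisms) these arrows must go to identities. Functoriality and compatibility with the structural relations of Definition~\ref{hsignatures} are immediate from the definition, and the $h$-level monotonicity reduces to four small calculations such as $h(\eqsort_{\eqsort_K})=h(K)-2\leq h(K)-1=h(\eqsort_K)$.

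For (b), naturality of $\eta_G$ is immediate since for any $h$-morphism $I\colon(\L,h)\to(\L',h')$ both $G(I)\circ\eta_G(\L,h)$ and $\eta_G(\L',h')\circ I$ act as $I$ on each $\L$-sort. Naturality of $\mu_G$ follows because the four clauses of Definition~\ref{globcompmonad} involve only ``renaming the innermost subscript,'' and this renaming commutes with the subscript-renaming actions of $G^2(I)$ and $G(I)$. The unit laws in (c) are likewise straightforward: both $G\eta_G(\L)$ and $\eta_G G(\L)$ are inclusions $G(\L)\hookrightarrow G^2(\L)$ whose image lies in the sub-$h$-signature $G(\L)\subset G^2(\L)$, where $\mu_G$ is the identity by construction.

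The only non-routine check is associativity $\mu_G\circ G\mu_G=\mu_G\circ\mu_G G\colon G^3(\L)\to G(\L)$. A sort of $G^3(\L)$ has at most three iterated logical prefixes, and for sorts with fewer than three prefixes both composites visibly agree. For a triple-prefixed sort, which I will schematically write $L_2L_1L_0K$ with $L_i\in\{\eqsort,\rsort\}$ and $K\in\L$, the key computation is that $\mu_G G(L_2L_1L_0K)=L_1L_0K$ (since in the decomposition $G^3(\L)=G^2(G(\L))$ the sort $L_2L_1(L_0K)$ is doubled-logical over the base $L_0K\in G(\L)$, so $\mu_G(G(\L))$ drops the outer $L_2$), whereas $G\mu_G(L_2L_1L_0K)=L_2L_0K$ (since in the decomposition $G^3(\L)=G(G^2(\L))$ the functor $G$ applies $\mu_G(\L)$ to the inner $L_1L_0K\in G^2(\L)$, which drops $L_1$). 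A final application of $\mu_G(\L)$ to each then drops the now-outermost logical prefix, yielding $L_0K$ in both cases. The main obstacle throughout is purely bookkeeping: keeping straight which of the two decompositions $G(G^2(\L))=G^2(G(\L))$ one is using at each step and which prefix is thereby removed. No genuine mathematical difficulty arises beyond the content already packaged into the four-clause definition of $\mu_G$.
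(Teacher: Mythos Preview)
Your proposal is correct and takes the same approach as the paper, which simply records the proof as ``Straightforward from the definitions.'' You have spelled out in full the routine verification the paper omits, and your treatment of associativity (tracking which of the two decompositions $G(G^2(\L))=G^2(G(\L))$ one is using and hence which logical prefix gets removed) is exactly the bookkeeping one must do.
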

\begin{proof}
Straightforward from the definitions.
\end{proof}

\begin{defin}[$\FOLDSeq$]
We write $\FOLDSeq$ for the category of free $\Gmonad$-algebras, i.e. the Kleisli category for the monad $\Gmonad$.
The objects of $\FOLDSeq$ are the \textbf{$h$-signatures with equality}.
We write $$\glob{(-)} \colon \hSig \rightarrow \FOLDSeq$$ for the free algebra functor associated to the monad $\Gmonad$.
\end{defin}

\begin{exam}
\[
\xymatrix{
&\Lrg & &\longmapsto & & &\glob{\Lrg} \\
1 &I \ar[d]_{i} & & & &1 &I \ar[d]^{i} &\eqsort_A \ar@/^/[ld]^{s_A} \ar@/_/[ld]_{t_A} &\eqsort_{\eqsort_O} \ar@/^/[d]^{s_{\eqsort_O}} \ar@/_/[d]_{t_{\eqsort_O}} &\rsort_O \ar@/^20pt/[ld]^{\rsort_O}\\2 &A \ar@/^/[d]^{d} \ar@/_/[d]_{c} & &\longmapsto & &2 &A \ar@/^/[d]^{d} \ar@/_/[d]_{c} & &\eqsort_O \ar@/^/[lld]^{s_O} \ar@/_/[lld]_{t_O} \\
3 &O & & & &3 &O
}
\]
This explains the notation in Example \ref{globLrg}.
\end{exam}

\newcommand\taufunctor[1]{\ts{#1}}

\begin{defin}\label{taufunctor}
We define a functor $\taufunctor{(-)} \colon \FOLDSeq \rightarrow \hSig$ as follows: 
\begin{itemize}
\item On objects $T$ takes $(\L,h)$ to the $h$-signature $\taufunctor{(\L,h)} = (\ts{\L},\ts{h})$ where $$\ob{\ts{\L}} = \ob{\L} 
\cup \setdefinition{\tsort_f}{f\colon A \rightarrow K, f \in \toplevel{A}, h(K) \geq 2}$$ 
\item On arrows, given $I \colon (\L,h) \rightarrow (\L',h')$ we define $T(I) \colon \ts{\L} \rightarrow \ts{\L'}$ as the following functor:
\begin{itemize}
\item On objects we have $T(I)\vert_{\L} = I$ and if $\tsort_f \in \ob{\ts{\L}} \setminus \ob{\L}$ we set $T(I)(\tsort_f) = \tsort_{I(f)}$.
\item On arrows we once again have $T(I)\vert_{\L} = I$ and whenever $\tmorX{f}, \tmorXX{f}, \tmorpath{f} \in \morph{\ts{\L}} \setminus \morph{\L}$ we set $T(I)(\tmorX{f}) = \tmorX{I(f)}$, $T(I)(\tmorXX{f}) = \tmorXX{I(f)}$  and $T(I)(\tmorpath{f})=\tmorpath{I(f)}$. 
\end{itemize}
\end{itemize}
\end{defin}

\begin{lemma}
$\taufunctor{(-)}$ is well-defined.
\end{lemma}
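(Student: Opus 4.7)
The plan is to mirror the structure of the proof of Lemma \ref{Gwelldefined}. First, I would check that $\taufunctor{(\L,h)}$ is uniquely determined by $(\L,h)$. By condition (3) of Definition \ref{hsignatures}, any sort of the form $\tsort_f$ appearing in an $h$-signature must come with an entirely fixed shape: exactly three top-level arrows $\tmorX{f}, \tmorXX{f}, \tmorpath{f}$ whose codomains and composition relations are dictated by $f$, and its $h$-level is forced to be $h(A)-1$. Since the construction in Definition \ref{taufunctor} adds no non-logical sorts beyond those of $\L$, there is no further ambiguity in forming $\ts{\L}$ and $\ts{h}$.

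Next I would verify that for any morphism $I \colon (\L,h) \to (\L',h')$ of $h$-signatures, the assignment $T(I)$ genuinely yields an $h$-morphism $\ts{\L} \to \ts{\L'}$. On objects and arrows inherited from $\L$ this is immediate from the hypothesis on $I$. On a newly added sort $\tsort_f$ with $f \colon A \to K$ the required inequality is
\[
\ts{h}(\tsort_f) = h(A) - 1 \leq h'(I(A)) - 1 = \ts{h'}(\tsort_{I(f)}) = \ts{h'}(T(I)(\tsort_f)),
\]
using monotonicity of $I$ on $h$-levels together with the stipulation $\infty - 1 = \infty$. Preservation of composition, identities, and the proper order on the newly introduced arrows follows because they are attached by the same uniform recipe in both $\ts{\L}$ and $\ts{\L'}$, and because the equations imposed by Definition \ref{hsignatures}(3) on $\tsort_f$ are images under $T(I)$ of the corresponding equations satisfied by $\tsort_{I(f)}$.

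The one point that requires a brief check is that $I(f) \in \toplevel{I(A)}$ whenever $f \in \toplevel{A}$, so that $\tsort_{I(f)}$ is actually present in $\ts{\L'}$ and can serve as the image of $\tsort_f$. Any non-trivial factorization of $I(f)$ in $\L'$ would, by fullness and faithfulness of the underlying functor (in the case of $\FOLDS_i$-morphisms, which is the relevant subcategory for semantic purposes), pull back to a non-trivial factorization of $f$ in $\L$; for a general $h$-morphism one uses that $I$ respects composition and the ambient proper order. Once this is in hand, functoriality of $\taufunctor{(-)}$ itself (preservation of identities and composition of $h$-morphisms) reduces to inspection of the uniform definitions. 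The main potential obstacle is the top-level preservation point just described, but it is mild and handled in exactly this fashion.
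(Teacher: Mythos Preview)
Your approach is essentially the same as the paper's, which simply says ``Just as in the proof of Lemma~\ref{Gwelldefined} the choice of the objects of $(\ts{\L},\ts{h})$ determines a unique $h$-signature and that each $T(I)$ is an $h$-homomorphism follows easily.'' You have supplied more detail than the paper does, including the explicit $h$-level inequality for $\tsort_f$, which is correct.

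One caution: the point you single out---that $I(f)\in\toplevel{I(A)}$ whenever $f\in\toplevel{A}$---is a genuine subtlety, and your resolution for general $h$-morphisms (``$I$ respects composition and the ambient proper order'') does not actually establish it. A functor preserving composition and order can still send a top-level arrow to one that factors non-trivially in the target. Your full-and-faithful argument works, but the general case would need more. The paper does not address this point at all (it is content with ``follows easily''), so you are not behind the paper here; you have simply been more scrupulous about naming where the argument is thin.
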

\begin{proof}
Just as in the proof of Lemma \ref{Gwelldefined} the choice of the objects of $(\ts{\L},\ts{h})$ determines a unique $h$-signature and that each $T(I)$ is an $h$-homomorphism follows easily.
\end{proof}

\begin{exam}
We have
\[
\xymatrix{
 &1 &I \ar[d] &\eqsort_A \ar@/^/[ld] \ar@/_/[ld]
 &\tsort_c \ar@/^15pt/[lld]^{\tmorX{c}} \ar@/^/[lld]_{\tmorXX{c}} \ar[rrd]_{\tmorpath{c}} &\tsort_d \ar@/^35pt/[llld]^{\tmorX{d}} \ar@/^25pt/[llld]_{\tmorXX{d}} \ar[rd]^{\tmorpath{d}} 
 &\eqsort_{\eqsort_O} \ar@/^/[d] \ar@/_/[d] &\rsort_O \ar[ld] 
&\tsort_s \ar@/^/[lld]_{\tmorX{s}} \ar@/^15pt/[lld]^{\tmorXX{s}}  \ar@/^28pt/[lld]^{\tmorpath{s}} &\tsort_t \ar@/^37pt/[llld]_{\tmorX{t}} \ar@/^45pt/[llld]^{\tmorXX{t}} \ar@/^58pt/[llld]^{\tmorpath{t}}  \\
\ts{(\glob{\Lrg})}= &2 &A \ar@/^/[d] \ar@/_/[d] & & & &\eqsort_O \ar@/^/[lllld] \ar@/^15pt/[lllld] \\
&3 &O
}
\]
where we have omitted the names of those arrows that are already in $\glob{\Lrg}$ for readability.
\end{exam}

\begin{defin}[Transport Structure Endofunctor]\label{Tfunctor}
We define the \textbf{transport structure endofunctor} $T \colon \FOLDSeq \rightarrow \FOLDSeq$ as the composite $\glob{(-)} \circ \taufunctor{(-)}$.
\end{defin}



%

\newcommand\FOLisom[1]{\FOLiso^{#1}}

\begin{terminology}
For a category $\C$ and an endofunctor $F \colon \C \rightarrow \C$ an object $a$ of $\C$ is a \emph{fixed point} of $F$ if $F(a)=a$.
\end{terminology}

\begin{defin}\label{FOLisofinite}
We write $\FOLisom{\text{fin}}$ for the full subcategory of $\FOLDSeq$ consisting of the fixed points of the endofunctor $T$.
The objects of $\FOLisom{\text{fin}}$ are the \textbf{signatures of first-order logic with isomorphism of finite height} or \textbf{finite $\FOLiso$-signatures}.
Similarly we write $\FOLisom{m}$ for the for the full subcategory of $\FOLDSeq^m$ consisting of the fixed points of the endofunctor $T$.
The objects of $\FOLisom{m}$ are the \textbf{signatures of first-order logic with isomorphism of height $m$} or \textbf{$\FOLiso$-$m$-signatures}.
\end{defin}

\begin{exam}
We have $TT(\glob{\Lrg})=T(\glob{\Lrg})$ and therefore $\glob{\Lrg}$ is a $\FOLiso$-3-signature.
\end{exam}

We now wish to extend Definition \ref{FOLisofinite} to $h$-signatures of possibly infinite height, and thus finally arrive at the full definition of a $\FOLiso$ signature.

\begin{defin}
For any $\L$ in $\FOLDSeq$ we let let $J_\L$ be the functor
\[
\xymatrix{
\omega \ar[rrrr]^{J_{\L}} & & & &\FOLDSeq \\
n   & &\mapsto & & T^n \L \\
m \ar[u]^{\leq} & &\mapsto & & T^m\L \ar@{>->}[u]
}
\]
where $\omega$ is the usual poset category on the ordinal $\omega$, $T^i\L$ is the $i$-fold application of the functor $T$ to $\L$ and each map $T^m \L \hookrightarrow T^n \L$ is the obvious inclusion.
\end{defin}

\newcommand\repletion[1]{#1^{\cong}}
\def\colim{\underset{\longrightarrow}{\text{lim}}}
\def\colime{\underset{\longrightarrow}{\text{\emph{lim}}}}

\begin{lemma}
The sequential colimit $\colime J_\L$ of $\L$ exists in $\FOLDSeqe$ for any $\FOLDSeqe$-signature $\L$.
\end{lemma}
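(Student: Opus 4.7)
The plan is to construct the colimit explicitly as the union of the ascending chain. Let $\L^\infty$ be the $h$-signature with
\[
\ob{\L^\infty} \eqdef \bigcup_{n \in \omega} \ob{T^n \L}, \qquad \morph{\L^\infty} \eqdef \bigcup_{n \in \omega} \morph{T^n \L},
\]
with composition, identities, and $h$-level function $h^\infty$ inherited pointwise; this is well-defined precisely because each $T^m \L \hookrightarrow T^n \L$ is an inclusion of $h$-signatures that preserves all of this structure. The relations defining each $T^n \L$ transfer to $\L^\infty$ in the obvious way.

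First I would verify that $\L^\infty$ is a bona fide $h$-signature. Conditions (1), (2), (4) of Definition \ref{hsignatures} and the conditions on isomorphism sorts, reflexivity sorts and transport structure are all \emph{local} to the finitely many sorts and arrows in a fixed neighborhood of a given object, and hence hold in $\L^\infty$ because they already hold in whichever $T^n \L$ first contains the sort in question. Likewise, having no non-identity endomorphisms and being skeletal both follow immediately, since any offending endomorphism or isomorphism would already have to occur within some $T^n \L$.

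The delicate inverse-category condition is finite fan-out (condition (3) of Definition \ref{inversecategory}). Here the key structural observation is that both components of $T = \glob{(-)} \circ \taufunctor{(-)}$ add new objects ($\eqsort_K$, $\rsort_K$, $\tsort_f$) \emph{all of whose} non-identity outgoing arrows land in sorts already present in the previous signature, and in particular \emph{no new arrows are ever added with domain an already-existing sort}. Consequently, for any $K \in \ob{T^n \L}$ one has $\pcosieve{K}{\L^\infty} = \pcosieve{K}{T^n \L}$, which is finite by hypothesis on $T^n \L$. This is the only substantial step; I expect it to be the main obstacle, but the observation above makes it routine.

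Finally, the inclusions $T^n \L \hookrightarrow \L^\infty$ assemble into a cocone under $J_\L$ in $\FOLDSeq$. For universality, given any cocone $(I_n \colon T^n \L \to \M)_{n \in \omega}$, I would define $I^\infty \colon \L^\infty \to \M$ by $I^\infty(x) \eqdef I_n(x)$ for any (equivalently, the least) $n$ with $x \in T^n \L$; compatibility of the cocone ensures well-definedness, and the $h$-morphism conditions are inherited from the $I_n$. Uniqueness follows by restricting any mediating morphism to each $T^n \L$ and invoking the colimit factorization. This exhibits $\L^\infty$ as $\colim J_\L$ in $\FOLDSeq$.
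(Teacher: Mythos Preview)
Your proposal is correct and takes essentially the same approach as the paper: construct the colimit as the union of the chain, observe that this is an $h$-signature (in particular, that the logical sorts occur in a unique way and that $\NL{\L^\infty}=\NL{\L}$), and check that the inclusions form a universal cocone. Your treatment is in fact more careful than the paper's, which does not address finite fan-out at all; your observation that $T$ never adds outgoing arrows to already-existing sorts is exactly the right point and is only implicit in the paper.
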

\begin{proof}
We construct $\colim J_\L$ as the union of all the $T^i\L$ over $i \in \mathbb{N}$, i.e. we set
\begin{align*}
\ob{(\colim J_\L)} &\eqdef \underset{i \in \mathbb{N}}{\bigcup} \ob{(T^i\L)} \\
\morph{(\colim J_\L)} &\eqdef \underset{i \in \mathbb{N}}{\bigcup} \morph{(T^i\L)}
\end{align*}
It is then clear by the definition of $h$-signatures that this data defines a unique $h$-signature since each logical sort can occur in a unique way and all the $T^i\L$ contain the same non-logical sorts, namely $\NL{\L}$. Clearly $\colim J_\L$ is a free $\Gmonad$-algebra and each $T^i\L$ embeds into it in the obvious way, thus giving us a cocone which is immediately seen to be universal.
\end{proof}

\begin{defin}[$\FOLiso$-signatures]\label{FOLisosignatures}
For any $\L$ in $\FOLDSeq$ we define its \textbf{associated $\FOLiso$-signature $\repletion{\L}$} by 
\begin{equation}
\repletion{\L} \eqdef \colim J_\L
\end{equation}
The assignment $\L \mapsto \repletion{\L}$ defines an endofunctor
\[
\mathcal{T} \colon \FOLDSeq \longrightarrow \FOLDSeq
\]
and we write $\FOLisocat$ for the category of algebras of the endofunctor $\mathcal{T}$.
The objects of $\FOLisocat$ are the \textbf{signatures of first-order logic with isomorphism} or $\FOLiso$\textbf{-signatures}. 
The morphisms of $\FOLisocat$ are the \textbf{$\FOLiso$-morphisms} or \textbf{logical $h$-morphisms}.
\end{defin}

\newcommand\forget[1]{\vert #1 \vert}

We can now summarize the situation in the following diagram of functors, with $U_1, U_2, U_3$ the obvious forgetful functors.
\[
\xymatrix{
 & & &\FOLDSeq \ar[ld]_{\ts{(-)}} \ar[rd]^{T} \\
\FOLDS (\Ob, \Mor) &  &\hSig \ar@(ul,ur)^G  \ar@/^/[rr]^{\glob{(-)}} \ar[ll]^{\:\:\:\: \quad \quad U_1} & &\FOLDSeq \ar@(ul,ur)^{\mathcal{T}} \ar@{=}[d] \ar@/^/[ll]^{U_2} & &\FOLisocat \ar@{=}[d] \ar[ll]^{U_3} \\
 & & & &\textbf{Kl}(\Gmonad) & &\mathcal{T}\textbf{-Alg} \\
}
\]

\begin{notation}
For a given $\FOLiso$-signature $\L$ we will write $\forget{\L}$ for its image under the composite $U_1U_2U_3$ of the corresponding forgetful functors.
We write $TT_\L$ and $\LTT_\L$ for $TT_{\forget{\L}}$ and $\LTT_{\forget{\L}}$. 
\end{notation}

\begin{terminology}
We call $\forget{\L}$ the \emph{underlying FOLDS signature} of a $\FOLiso$-signature $\L$.
We say that a $\FOLiso$-signature is \emph{essentially finite} if its underlying FOLDS signature is essentially finite. 
\end{terminology}

%


With this in mind we can use the fact that every $\FOLiso$-signature $\L$ has an underlying FOLDS$(\Ob,\Mor)$-signature to define the syntax of $\FOLiso$ as the syntax of its underlying FOLDS$(\Ob,\Mor)$-signature. We thus arrive at the desired definition of the syntax of $\FOLiso$.

\begin{defin}[Syntax of $\FOLiso$]
Let $\L$ be a $\FOLiso$-signature. The \textbf{contexts, sorts, context morphisms, formulas and sequents of $\L$} are defined as the contexts, sorts, context morphisms, formulas and sequents of $\LTT_{\L}$.
\end{defin}

\newcommand\FOLdepth[1]{d(#1)}
%
%
\newcommand{\depth}[1]{d(#1)}
\newcommand{\FOLisodh}[2]{\FOLiso^{#1,#2}}
%
%
%
%
%

\section{Homotopy Semantics of $\FOLiso$}\label{nlogSemantics}

We will define the semantics for $\FOLiso$ as an interpretation directly into the syntax of Homotopy Type Theory (HoTT). 
HoTT is here understood as intensional MLTT with $\Pitype$, $\Sigmatype$, $\Idtype$, $\singletontype$, $\zerotype$, $+$-types, a univalent universe $\U$ and propositional truncation $\truncated{-}$. 
As usual we will loosely refer to HoTT as \emph{type theory}.

We will follow the general pattern that we followed when defining the interpretation of FOLDS into MLTT.
We first define for any essentially finite $\FOLiso$-signature $(\L,h)$ a notion of an $\L$-structure by describing a type expression $\HStruc{\L,h}$ of ``homotopy $\L$-structures'' (Definition \ref{HLstructure}), proving that this type expression is  a well-formed type (Theorem \ref{hstrucwellformed}), and taking the terms of this type to be our notion of $\L$-structure (Definition \ref{hstructure}). The notions of interpretation, satisfaction, model, theory etc. are then defined similar to Section \ref{prelim} with some modifications to account for the newly-introduced logical symbols. 




\def\PropU{\textbf{Prop}_{\mathcal{U}}}
\def\SetU{\textbf{Set}_{\mathcal{U}}}
\def\GpdU{\textbf{Gpd}_{\mathcal{U}}}

\def\transportrule{(\text{trans})}

\begin{notation}
For any $m \in \Natinfty$ we will write $\typesofhlevel{m}$ for the types of $h$-level $m$ in $\U$ with the convention that $\typesofhlevel{\infty} \eqdef \U$.
We will also use the more recognizable notation $\PropU, \SetU$ and $\GpdU$ for types of $h$-level $1,2$ and $3$ respectively.
In general, we will abuse notation and conflate a term $A \colon \typesofhlevel{m}$ with its underlying type (i.e. with its first projection).
In all other notational matters related to type theory we will follow the notation of \cite{HTT} closely.
In particular, we write $\Idtype_A (a,b)$ for the identity type of terms $a,b$ of $A$ and we write $\trans_p^{x.P} (t)$ for the (``covariant'') transport along $p$ of a(n appropriate) type family $P$ bound at some variable $x$ for a given term $t$. More precisely, $\trans_p^{x.P} (t)$ can be understood as being controlled by the following rule
\[
\inferrule
{
\Gamma \vdash t \colon P[a/x] \\\\
\Gamma \vdash p \colon \Idtype_A(a,b) \\\\
\Gamma, x \colon A \vdash P \: \TTisatype
}
{
\Gamma, x \colon A \vdash \trans_p^{x.P} (t) \colon P[b/x]
}
\transportrule
\]
which we will have ocassion to refer to in the proof of Lemma \ref{hstrucwellformedlemma} below.
\end{notation}

\begin{terminology}
We will use the HoTT terminology and call a type $A$ a \emph{mere proposition} if its identity types are contractible in the sense that we have an inhabitant of the type.
With this in mind what we will refer to as the \emph{universal property of propositional truncation} means that given a term $\eta \colon A \rightarrow P$ where $P$ is a mere proposition we obtain a term 
\[
\truncated{\eta} \colon \truncated{A} \rightarrow P
\]
and leave the details of the particular implementation of the propositional truncation operator $\truncated{-}$ implicit. 
\end{terminology}


\newcommand{\varmodifier}[1]{\widetilde{#1}}

\begin{defin}[Homotopy $(\L,h)$-structure]\label{HLstructure}
Let $\L = (\L,h)$ be an essentially finite $h$-signature.
The type of \textbf{homotopy $(\L,h)$-structures} is given by the type expression
\begin{equation}
\HStruc{\L,h} \eqdef \underset{K \in \NL{\L}}{\Sigmatype} (K \colon T_K)
\end{equation}
where
\begin{equation}
T_K \eqdef \underset{f \in \pcosieve{K}{\L}}{\Pitype} (f \colon \inttype{K_f}) \:\: \typesofhlevel{h(K)} 
\end{equation}
and where the symbol $\inttype{K_f}$ is defined by induction on the level of $K_f$ as follows:
\begin{itemize}
\item $\Elements(\TTapp(*, K_f))$ if $\level{K_f}=0$
\item $\TTElapp{K_f(pf)_{p \in \pcosieve{K}{\L}}}$, if $K_f \in \NL{\L}$
\item $\Idtype_{\inttype{K_{sf}}} (sf, tf)$, if $K_f = \: \eqsort_{A}$ for some $A \in \L$
\item $\Idtype_{\inttype{K_{\rmor f}}} (\rmor f, \refl_{s\rmor f})$, if $K_f = \rsort_A$ for some $A \in \L$
\item $\Idtype_{\inttype{K_{\tmorXX{h}f}}} (\tmorXX{h}f, \trans_{e_hf}^{\varmodifier{h\tmorX{h}f}.\inttype{K_{\tmorX{h}f}}[\varmodifier{h\tmorX{h}f}/h\tmorX{h}f]} (\tmorXX{h}f))$, if $K_f \equiv \tau_h$ for some (top-level) $h \colon A \rightarrow K$ in $\L$ and where $\varmodifier{h\tmorX{h}f}$ denotes a variable distinct from $h\tmorX{h}f$ and $h\tmorXX{h}f$.
\end{itemize}
\end{defin}

\begin{remark}
Note that the logical sorts of a $\FOLiso$-signature $\L$ will appear in $\HStruc{\L,h}$ only if they are the codomain of a non-logical arrow in $\L$. Since $\L$ is assumed essentially finite it will contain only a finite number of non-logical symbols, and therefore the type expression $\HStruc{\L,h}$ will always be of finite length (even if it contains non-logical sorts of $h$-level $\infty$).
\end{remark}

\begin{remark}
In the definition of $T_K$ in Definition \ref{HLstructure} we are suppressing  the information that the symbol $\inttype{K_f} (\mathbf{p}f)$ actually consists both of a type expression (displayed above) as well as a proof (expression) that that type is of a certain $h$-level. So, for example, strictly speaking we would have
\[
\widehat{\eqsort_A} (\mathbf{p}f) \eqdef \langle \Idtype_{\inttype{A_{sf}} (\mathbf{p}sf)} (sf, tf), \mathtt{isofhlevel}(h(A)-1) \rangle
\]
However, in order to not clutter the notation further, we will keep this information implicit, noting of course that it is important in the proofs of Theorem \ref{hstrucwellformed} below.
\end{remark}

\begin{notation}
For a given essentially finite $\FOLiso$ signature $\L$ we will write simply $\HStruc{\L}$ for the type of homotopy $\L$-structures of its underlying $h$-signature.
\end{notation}

\begin{prop}\label{Hstrucinvariant}
If $\L \cong \L'$ as $\FOLiso$-signatures then $\HStruc{\L} \equiv \HStruc{\L'}$.
\end{prop}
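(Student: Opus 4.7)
The plan is to mirror the argument given for Proposition \ref{isoLequalstrucL}, where isomorphic FOLDS signatures were observed to yield $\alpha$-equivalent type expressions for $\Struc{-}$. The new ingredients in the $\FOLiso$ setting are the logical sorts $\eqsort_K, \rsort_K, \tsort_f$ and the fresh bound variables appearing in the transport clause of Definition \ref{HLstructure}, so each inductive clause in the definition of $\inttype{-}$ must be checked to respect the isomorphism.

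First I would unpack what an isomorphism $I \colon \L \rightarrow \L'$ in $\FOLisocat$ provides: bijections on objects and arrows preserving domains, codomains, composition, the proper order, and $h$-levels (the inequality $h(K) \leq h'(I(K))$ in Definition \ref{hsigcategory} is forced to equality when $I$ is an iso). By the uniqueness clauses for logical sorts in Definition \ref{hsignatures}, together with compatibility with the monad $\Gmonad$ and the endofunctor $\mathcal{T}$ out of which $\FOLisocat$ is built, $I$ must send $\eqsort_K, \rsort_K, \tsort_f$ and their associated structure maps $s_K, t_K, \rmor_K, \tmorX{f}, \tmorXX{f}, \tmorpath{f}$ to the corresponding logical symbols over $I(K)$ and $I(f)$. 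In particular $I$ restricts to a bijection $\NL{\L} \cong \NL{\L'}$ compatible with the $h$-assignment.

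Next I would show by induction on $\level{K_f}$ that for every arrow $f$ of $\L$ the type expression $\inttype{K_f}$ and the type expression $\inttype{(I(K_f))}$ (obtained by running Definition \ref{HLstructure} over $\L'$) are $\alpha$-equivalent once every sort and arrow symbol of $\L$ appearing inside is renamed to its $I$-image. The base clause ($\level{K_f}=0$) is immediate, and the four inductive clauses (non-logical, $\eqsort$, $\rsort$, $\tsort$) each match up symbol-for-symbol under $I$ because $I$ preserves the whole discriminating data ($s,t,\rmor,\tmorX{f},\tmorXX{f},\tmorpath{f}$ and the domain/codomain structure). Finally, since $\HStruc{\L}$ is the iterated $\Sigmatype$-type over $\NL{\L}$ with $K$-component
\[
T_K = \underset{f \in \pcosieve{K}{\L}}{\Pitype}(f \colon \inttype{K_f})\: \typesofhlevel{h(K)},
\]
and since $I$ preserves the proper order used to linearize these products, the inductive claim and the equality $h(K) = h'(I(K))$ together show that $\HStruc{\L}$ and $\HStruc{\L'}$ are identical type expressions modulo a consistent renaming of bound variables. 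This yields $\HStruc{\L} \equiv \HStruc{\L'}$ in the sense of the paper's $\equiv$ (syntactic equality up to $\alpha$-conversion).

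The main obstacle I anticipate is the transport clause, where the variable $\varmodifier{h\tmorX{h}f}$ is produced by the external fresh-variable provider $\fresh$ rather than by the signature $\L$ itself; to make the renaming a genuine $\alpha$-conversion I would either stipulate that $\fresh$ is chosen coherently on the two sides of $I$ or, equivalently, pass to $\alpha$-equivalence classes from the outset. This is a bookkeeping point rather than a mathematical obstruction, and with it resolved the $\alpha$-equivalence argument concludes the proof.
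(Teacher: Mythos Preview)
Your proposal is correct and follows essentially the same route as the paper: the paper's proof is a one-liner observing that a $\FOLiso$-isomorphism preserves logical sorts and $h$-level, whence the two type expressions are $\alpha$-equivalent. You have simply unpacked that observation in more detail, including the inductive check on $\inttype{-}$ and the bookkeeping remark about the fresh variable in the transport clause, none of which the paper spells out.
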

\begin{proof}
If $\L \cong \L'$ as $\FOLiso$-signatures then there is an isomorphism between the underlying $h$-signatures that preserves logical sorts and $h$-level, which implies exactly that the expressions $\HStruc{\L}$ and $\HStruc{\L'}$ are $\alpha$-equivalent.
\end{proof}

\begin{remark}
Note that Proposition \ref{Hstrucinvariant} is not true if in its statement we replace ``as $\FOLiso$-signatures'' with ``as $h$-signatures'' since we require an isomorphism that preserves the logical sorts.
\end{remark}

\begin{prop}\label{hstrucsameasfolisostruc}
$\HStruce{\L} \equiv \HStruce{\repletion{\L}}$
\end{prop}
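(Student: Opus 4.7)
The plan is to unfold the definitions and observe that the only data on which $\HStruc{-}$ depends is preserved by the passage from $\L$ to $\repletion{\L}$. By Definition~\ref{HLstructure}, $\HStruc{\L}$ is a $\Sigmatype$-type indexed by the set $\NL{\L}$ of non-logical sorts, with each factor $T_K$ determined by the cosieve $\pcosieve{K}{\L}$, the codomain sorts $K_f$ together with the inductive unfolding of $\inttype{K_f}$, and the target $h$-level $h(K)$. Thus the proof will amount to showing that all of these pieces of data are literally the same in $\L$ and in $\repletion{\L}$, whence the two type expressions are $\alpha$-equal and therefore definitionally equal.

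The first step is to inspect the constructions $G$ (Definition~\ref{Gfunctor}), $\ts{(-)}$ (Definition~\ref{taufunctor}), $T$ (Definition~\ref{Tfunctor}) and the sequential colimit defining $\mathcal{T}$ (Definition~\ref{FOLisosignatures}) and extract the following invariant: each of these operations adds only new \emph{logical} sorts ($\eqsort_K$, $\rsort_K$, $\tsort_f$), and every new arrow introduced has one of the new logical sorts as its \emph{domain}, never as its codomain. Consequently no pre-existing sort ever acquires a new outgoing arrow. From this I will extract two immediate corollaries: $\NL{\L} = \NL{\repletion{\L}}$, and for each $K \in \NL{\L}$, $\pcosieve{K}{\L} = \pcosieve{K}{\repletion{\L}}$ as indexed families, together with matching codomains and $h$-levels.

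The second step is a routine induction on the level of $K_f$ establishing that, for every $K_f$ appearing as a codomain in some $\pcosieve{K}{\L}$ with $K \in \NL{\L}$, the expression $\inttype{K_f}$ unfolds to the same type expression whether computed in $\L$ or in $\repletion{\L}$. This is immediate from the clauses of Definition~\ref{HLstructure}: each clause refers only to the downward structure below $K_f$ (the arrows $s,t,\rmor,\tmorX{f},\tmorXX{f},\tmorpath{f}$ and their iterates), which is unchanged by the invariant above. Combining this with the first step yields $T_K^{\L} \equiv T_K^{\repletion{\L}}$ for every $K \in \NL{\L}$, and hence $\HStruc{\L} \equiv \HStruc{\repletion{\L}}$.

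The only potential obstacle is purely bookkeeping: I would need to verify the ``arrows go downward from new sorts'' invariant carefully at each of the three stages ($G$, $\ts{(-)}$, and the sequential colimit) and to check that the relations on logical sorts imposed by Definition~\ref{hsignatures} match up identically. There is no genuine mathematical content beyond this, since $\mathcal{T}$ is designed precisely so that the non-logical content visible to $\HStruc{-}$ is invariant. I would also note in passing that essential finiteness is preserved, since $\NL{-}$ is untouched, so $\HStruc{\repletion{\L}}$ is well-defined whenever $\HStruc{\L}$ is.
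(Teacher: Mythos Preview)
Your proposal is correct and takes essentially the same approach as the paper: both rest on the observation that the only sorts added in passing from $\L$ to $\repletion{\L}$ are logical sorts that are never codomains of arrows out of non-logical sorts, so the data visible to $\HStruc{-}$ is unchanged. You have unpacked this into a more careful step-by-step verification (tracking $\NL{-}$, the cosieves, and the inductive unfolding of $\inttype{K_f}$), whereas the paper compresses the same reasoning into a two-sentence remark.
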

\begin{proof}
Logical sorts appear in $\HStruc{\L,h}$ only if they are the codomain of non-logical sorts. But the objects in $\L^{\cong}$ that are not in $\L$ are those that are not the codomain of any non-logical sort.
\end{proof}

\begin{remark}
As Proposition \ref{hstrucsameasfolisostruc} makes precise, our semantics does not distinguish between an $h$-signature and (the underlying $h$-signature of) its associated $\FOLiso$-signature. This is to be expected since $\FOLiso$-signatures add to an $h$-signature logical sorts that one gets for free in type theory. The importance for our purposes of the full $\FOLiso$ structure is that it allows us to reason about constructs in HoTT (e.g. using the deductive system $\Diso$ defined below) externally to HoTT and in particular externally to any specific HoTT (e.g. cubical type theory, UniMath etc.). 
\end{remark}

\begin{exam}\label{Lrgcanonical}
Let $\Lrg^{321}$ denote the following $h$-signature:
\[
\Lrghsignature{3}{2}{1}
\]
Then by exactly the same reasoning as in Example \ref{Lrghstruc} we get
\[
\HStruc{\Lrg^{321}} = (O \colon \typesofhlevel{3}) \times (A \colon O \rightarrow O \rightarrow \typesofhlevel{2}) \times (I \colon (x \colon O) \rightarrow A(x,x) \rightarrow \typesofhlevel{1})
\]
The only thing that has changed from Example \ref{Lrghstruc} is that $O,A,I$ are now (functions landing in) the subuniverse of types of a certain $h$-level.
\end{exam}

\begin{exam}
Consider the $h$-signature $\Lucat$ from Example \ref{Lucatcanonical}. $\Lucat$ includes an arrow $u_2$ whose codomain is the isomorphism sort $\eqsort_O$.
This means that an identity type will appear in $\HStruc{\Lucat}$. To see how this works let us calculate the type expression $T_U$ associated to the sort $U$, recalling that $h(U)=1$.
\begin{align*}
T_U &= \underset{f \in \pcosieve{U}{\L}}{\Pitype} (f \colon \widehat{K_f}(\mathbf{p}f)) \rightarrow \typesofhlevel{h(U)} \\
&= (du_1, cu_1 \colon O) \rightarrow (u_1 \colon A (du_1, cu_1)) \rightarrow (u_2 \colon tu_2 \eqsort_O su_2) \rightarrow \typesofhlevel{1} \\
&\equiv (x, y \colon O) \rightarrow (f \colon A (x, y)) \rightarrow (p \colon x \eqsort_O y) \rightarrow \typesofhlevel{1} 
\end{align*}
Thus, $T_U$ can be understood as a propositional family (or relation) over a choice of any two points $x,y \colon O$, an ``arrow'' $f \colon A(x,y)$ and a ``path'' $p \colon x=_Oy$ from $x$ to $y$ in $O$. 
With additional axioms this relation can express e.g. that there is a bijection between ``arrows'' and ``paths'', as indeed we will do when we axiomatize univalent categories as a $\Lucat$-theory in the final section.
Overall, up to the obvious equivalence, we get:
\[
\HStruc{\Lucat} \simeq \HStruc{\Lprecat} \times (U \colon (x, y \colon O) \rightarrow (f \colon A (x, y)) \rightarrow (p \colon x \eqsort_O y) \rightarrow \typesofhlevel{1})
\]
\end{exam}

\begin{exam}
We now do an example that involves transport structure. Consider the following $h$-signature $\L_t$:
\[
\xymatrix{
1        & & &                   &P \ar[ld]_{l} \ar[d]^{m} \\
1 & & &I \ar[d]_{i}   &\tsort_d \ar@/^/[ld]_{d_1} \ar@/^25pt/[ld]_{d_2} \ar@/_/[rd]^{\tmorpath{d}}  \\
2& & & A \ar@/_/[d]_{d} \ar@/^/[d]^{c} & &\eqsort_O \ar@/^/[lld]^{s_O} \ar@/_/[lld]_{t_O} \\
3 & & & O \\
}
\]
The non-logical sorts of $\L_t$ other than $P$ are exactly as in $\Lrg^{321}$ above. So it remains to determine $T_P$. We have:
\begin{align*}
T_P  &= \underset{f \in \pcosieve{P}{\L}}{\Pitype} (f \colon K_f (\mathbf{p}f)) \:\: \typesofhlevel{1}  \\
	&= \Pitype (dil, c\tmorXX{d}m, s\tmorpath{d}m, t\tmorpath{d}m \colon O) (il \colon A(dil,cil)) (\tmorXX{d}m \colon A (d\tmorXX{d}m), c\tmorXX{d}m) (\tmorX{d}m \colon A (d\tmorX{d}m, c\tmorX{d}m) \\ & \quad \quad \: (\tmorpath{d}m \colon \Idtype_O (s\tmorpath{d}m, t\tmorpath{d}m)) (l \colon I(il,dil)) \\ 
	& \quad \quad \: (m \colon \Idtype_{A(d\tmorXX{d}m, c\tmorX{d}m)} (\tmorXX{d}m,  \trans_{\tmorpath{d}m}^{\varmodifier{d\tmorX{d}m}.A(\varmodifier{d\tmorX{d}m}, c\tmorX{d}m)} (\tmorX{d}m)) \:\: \typesofhlevel{1} \\
	&\equiv \Pitype (x, y, z, w \colon O) (f \colon A(x,x)) (h \colon A (w, y) (g \colon A (z, y)) \\ & \quad \quad \: (p \colon \Idtype_O (z, w)) (l \colon I(x,f)) \\ 
	& \quad \quad \: (m \colon \Idtype_{A(w, y)} (h,  \trans_{p}^{ v.A(v, y)} (g)) \:\: \typesofhlevel{1} 
\end{align*}
Intuitively a term of the type $T_P$ may be thought of as a relation that compares an ``identity arrow'' $f$ to the transport $h$ of an ``arrow'' $g$ along a path $p$. Overall, we get:
\[
\HStruc{\L_t} \equiv \HStruc{\Lrg^{321}} \times \underset{
\begin{subarray}  
\:x, y, z, w \colon O \\
f \colon A(x,x) \\
h \colon A (w, y) \\ 
g \colon A (z, y) 
\end{subarray}
}{\Pitype}
\quad
\underset{
\begin{subarray}
\:p \colon \Idtype_O (z, w) \\
l \colon I(x,f)
\end{subarray}
}{\Pitype}
\Idtype_{A(w, y)} (h,  \trans_{p}^{\lambda v.A(v, y)} (g)) \rightarrow \typesofhlevel{1}
\]
\end{exam}

\def\Lcircle{\L_{\text{circle}}}
\def\Tcircle{\T_{\text{circle}}}

\begin{exam}\label{Lcircleexample}
Consider the following $h$-signature $\Lcircle$:
\[
\xymatrix{
1   & &\mathtt{loop} \ar[ld]^m \ar[dd]^l  \\
1   &\mathtt{base} \ar[dd]^b \\
\infty &     &\eqsort_O \ar@/_/[ld]_s \ar@/^/[ld]^t \\
\infty &O
}
\]
with the relations $sl=tl=bm$. This $h$-signature can be thought of as encoding the type of circles, i.e. of types together with a preferred point (picked out by the ``predicate'' $\mathtt{base}$) and a preferred loop on that point (picked out by the ``predicate'' $\mathtt{loop})$. 
The reason for the arrow $m \colon \mathtt{loop} \rightarrow \mathtt{base}$ is in order to ensure that the source (and target) $x$ of the path $p$ that we pick out by $\mathtt{loop}$ is also such that it satisfies $\mathtt{base}(x)$.
The corresponding type of homotopy $\Lcircle$-structures is given by:
\begin{align*}
\HStruc{\Lcircle} = &(O \colon \U) \times (\mathtt{base} \colon O \rightarrow \typesofhlevel{1}) \times \\&(\mathtt{loop} \colon (x \colon O) \rightarrow \mathtt{base}(x) \rightarrow \mathtt{Idtype}_O(x,x) \rightarrow \typesofhlevel{1}) \\
\end{align*}
\end{exam}


\newcommand{\El}[1]{\textbf{El}(#1)}

\begin{lemma}\label{hstrucwellformedlemma0}
Let $\L$ be an $h$-signature and for any $f \in \morph{\L}$ let $\inttype{K_f}$ be as in Definition \ref{HLstructure}. Then for $f,g$ such that $\level{K_f}=\level{K_g}=n$ we have
$\inttype{K_f} \equiv \inttype{K_g}$ iff $K_f=K_g$ and $(pf)_{p \in \pcosieve{K_f}{\L}} = (pg)_{p \in \pcosieve{K_g}{\L}}$.
\end{lemma}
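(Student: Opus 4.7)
The reverse direction is immediate: if $K_f = K_g$, then $\pcosieve{K_f}{\L} = \pcosieve{K_g}{\L}$ as ordered sets, and assuming $(pf)_p = (pg)_p$ componentwise, a straightforward structural induction on the recursive clauses of Definition \ref{HLstructure} shows that $\inttype{K_f}$ and $\inttype{K_g}$ expand to the same raw syntactic expression. So the content of the lemma is the forward direction.

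For the forward direction the plan is to induct on $n = \level{K_f} = \level{K_g}$. In the base case $n = 0$, I would first observe that every logical sort in an $h$-signature has at least one outgoing non-identity arrow (by conditions 1(b), 2(b), 3(b) of Definition \ref{hsignatures}), so a level-$0$ sort must be non-logical. Thus both $\inttype{K_f}$ and $\inttype{K_g}$ have the form $\Elements(\ttapp(*,K_f))$, forcing $K_f = K_g$, and both cosieves are empty so the projection tuples are trivially equal. For the inductive step, I would first argue that the five clauses of Definition \ref{HLstructure} produce syntactically distinguishable outer shapes, so $K_f$ and $K_g$ must fall into the same clause: non-logical sorts give a term headed by $\Elements$; an $\eqsort_A$ gives an $\Idtype$ whose second argument is a plain arrow name; an $\rsort_A$ gives an $\Idtype$ headed by $\refl$; and a $\tsort_h$ gives an $\Idtype$ headed by $\trans$. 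Within the non-logical case, the nested $\ttapp$ structure forces $K_f = K_g$ at the innermost position and then the arguments $(pf) = (pg)$ at each layer. Within the $\eqsort_A$-case, syntactic equality of $\Idtype_{\inttype{K_{sf}}}(sf, tf) \equiv \Idtype_{\inttype{K_{sg}}}(sg, tg)$ gives $\inttype{K_{sf}} \equiv \inttype{K_{sg}}$, $sf = sg$, and $tf = tg$; since $\level{K_{sf}} < n$, the inductive hypothesis yields $K_{sf} = K_{sg}$, i.e. $A = B$, so $K_f = K_g$. The remaining entries of $\pcosieve{\eqsort_A}{\L}$ are composites $h\circ s_A = h\circ t_A$ (by condition 1(c)), and $(hs_A)f = h(sf) = h(sg) = (hs_A)g$ follows from $sf=sg$. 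The $\rsort_A$ case proceeds identically, applying the inductive hypothesis to $\rmor f$ and $\rmor g$.

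The main obstacle will be the transport clause $K_f = \tsort_h$, whose definiens
$\Idtype_{\inttype{K_{\tmorXX{h}f}}}(\tmorXX{h}f, \trans_{e_h f}^{\varmodifier{h\tmorX{h}f}.\inttype{K_{\tmorX{h}f}}[\cdots]}(\tmorX{h}f))$
bundles together three distinct pieces of arrow data ($e_h f$, $\tmorX{h}f$, $\tmorXX{h}f$) together with a type family recovered by variable substitution. I would match the two sides piece by piece: the outer $\Idtype$ yields $\inttype{K_{\tmorXX{h}f}} \equiv \inttype{K_{\tmorXX{h'}g}}$ and $\tmorXX{h}f = \tmorXX{h'}g$; the superscript of $\trans$ yields $e_h f = e_{h'} g$; and the annotation type family yields (after undoing the renaming, which is inert because the bound variable is fresh) $\inttype{K_{\tmorX{h}f}} \equiv \inttype{K_{\tmorX{h'}g}}$ and $\tmorX{h}f = \tmorX{h'}g$. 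Each of these codomains is at level strictly less than $n$, so the inductive hypothesis applies and, together with the relations 3(c)–(e) which ensure that the full cosieve $\pcosieve{\tsort_h}{\L}$ is determined by the three top-level arrows, lets me conclude $K_f = K_g$ and $(pf) = (pg)$. The cross-case incompatibilities (e.g.\ an $\Elements$-expression cannot be syntactically equal to an $\Idtype$-expression, and the three $\Idtype$-subcases differ in whether their second argument is a variable, a $\refl$, or a $\trans$) rule out the possibility that $K_f$ and $K_g$ fall into different clauses, completing the induction.
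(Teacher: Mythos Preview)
Your proof is correct, but your emphasis is exactly inverted relative to the paper's. The paper proves only the right-to-left implication---by induction on the level $n$, taking cases on the clause of Definition~\ref{HLstructure} governing $K_f=K_g$ (level $0$, non-logical, $\eqsort_A$, with $\rsort_A$ and $\tsort_h$ left as analogous)---and explicitly leaves the left-to-right direction to the reader, remarking that it is not required downstream. Indeed only the right-to-left direction is invoked in the proof of Lemma~\ref{hstrucwellformedlemma}, where one needs to conclude $\inttype{K_{tf}}\equiv\inttype{K_{sf}}$ from the relations $qsf=qtf$ and $K_{sf}=K_{tf}$ imposed by Definition~\ref{hsignatures}.

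So your one-sentence dismissal of the right-to-left direction as ``immediate by structural induction'' is in fact the part the paper writes out, and your detailed left-to-right argument (syntactic shape discrimination between $\Elements$-headed and $\Idtype$-headed expressions, then among the three $\Idtype$ subcases by the form of the second argument) is extra work the paper does not supply. That extra work looks sound---in particular your observation that level-$0$ sorts must be non-logical because each logical sort has at least one outgoing arrow by conditions 1(b), 2(b), 3(b) is correct and is the right way to start the base case---but it is surplus to what the paper actually proves or needs.
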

\begin{proof}
We do the right-to-left direction and leave the other direction (which we do not require below) to the reader.
We proceed by induction on the level $n$ of $K_f$ and $K_g$. Assume $\level{K_f}=\level{K_g}=0$. 
Then if 
\begin{equation}\label{blurgy}
K_f=K_g
\end{equation}
we get
\begin{equation}
\inttype{K_f} \equiv \Elements(\TTapp(*,K_f)) \overset{(\ref{blurgy})}{\equiv} \Elements(\TTapp(*,K_g)) \equiv \inttype{K_g}
\end{equation}
Now assume the left-to-right implication holds for all $m<n$ and that $\level{K_f}=\level{K_g}=n$.
Then we take cases.
If $K_f=K_g \in \NL{\L}$ then we get $\inttype{K_f} \equiv \inttype{K_g}$ by the essentially the same argument as above.
If $K_f=K_g=\eqsort_A$ then we get $K_{sf} = K_{sg}$ and if $(\mathbf{p}f)=(\mathbf{p}g)$ then in particular  we get
\begin{equation}\label{IH1}
(\mathbf{q}sf) = (\mathbf{q}sg) \quad\quad sf=sg \quad\quad  tf=tg
\end{equation}
Hence, by the inductive hypothesis we get 
\begin{equation}\label{IH2}
\inttype{K_{sf}} \equiv \inttype{K_{sg}}
\end{equation}
and therefore
\begin{equation}
\inttype{K_f} \equiv \Idtype_{\inttype{K_{sf}}} (sf,tf) \overset{(\ref{IH1}),(\ref{IH2})}{\equiv} \Idtype_{\inttype{K_{sg}}} (sg,tg) \equiv \inttype{K_g}
\end{equation}
Analogous arguments work for the $\rsort_A$ and $\tsort_h$ cases.
\end{proof}

\begin{lemma}\label{hstrucwellformedlemma}
For all $K \in \NL{\L}$ the following context is well-formed
\[
\Gamma_K \eqdef (A \colon T_A)_{A \leq K, A \in \NL{\L}}
\]
\end{lemma}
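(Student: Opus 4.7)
The plan is to closely parallel the proof of Lemma \ref{strucwellformedlemma}, proceeding by $<$-induction on $K$ restricted to $\NL{\L}$. For the base case, $K$ is $<$-minimal in $\NL{\L}$ and $\Gamma_K \equiv K \colon T_K$; we check this is well-formed using $\Pi$-formation, together with the fact that $\typesofhlevel{h(K)}$ is a closed type in HoTT (note that $h(K) \neq 0$ for non-logical $K$ by condition (4) of Definition \ref{hsignatures}, so the subuniverses at hand are available). For the inductive step, letting $B$ be the immediately preceding element of $\NL{\L}$ under $<$, it suffices to show $\Gamma_B \vdash T_K \TTisatype$. As in the FOLDS proof, this is done by a subsidiary $<$-induction on $f \in \pcosieve{K}{\L}$ which builds up the telescope
\[
\Gamma_B, (f \colon \inttype{K_f})_{f \in \pcosieve{K}{\L}}
\]
before concluding by $\Pi$-formation into $\typesofhlevel{h(K)}$.

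The novel content, relative to Lemma \ref{strucwellformedlemma}, lies in verifying that $\inttype{K_f}$ is a well-formed type in the current context, splitting on the form of $K_f$ per Definition \ref{HLstructure}. The non-logical case is essentially identical to the FOLDS proof, using the hypothesis that $K_f^\M$ has been declared earlier in $\Gamma_B$ with the appropriate dependent-function type and applying $\Pi$-elimination as needed. For $K_f \equiv \eqsort_A$, one uses $\Idtype$-formation on $\inttype{K_{sf}}$ (well-formed by the inner IH) with the two variables $sf,tf$; the key point is that the relation $f s_K = f t_K$ from Definition \ref{hsignatures}(1)(c) forces $(\mathbf{p}sf) = (\mathbf{p}tf)$, so by Lemma \ref{hstrucwellformedlemma0} one has $\inttype{K_{sf}} \equiv \inttype{K_{tf}}$ and both $sf$ and $tf$ inhabit the same ambient type. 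The $\rsort_A$ case is similar, with the relation $s_K \rmor_K = t_K \rmor_K$ in Definition \ref{hsignatures}(2)(c) ensuring that $\refl_{s\rmor f}$ has the codomain type needed for $\Idtype$-formation.

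The main obstacle is the transport case $K_f \equiv \tsort_h$. Here one must justify the application of the $\transportrule$ rule to form $\trans_{e_h f}^{\varmodifier{h\tmorX{h}f}.\inttype{K_{\tmorX{h}f}}[\varmodifier{h\tmorX{h}f}/h\tmorX{h}f]}(\tmorXX{h}f)$, and then apply $\Idtype$-formation on top. This requires three things, each of which is supplied by a clause of Definition \ref{hsignatures}(3): first, that the type family $\varmodifier{h\tmorX{h}f}.\inttype{K_{\tmorX{h}f}}[\varmodifier{h\tmorX{h}f}/h\tmorX{h}f]$ is well-formed in the relevant context, which follows from the inner IH together with condition (3)(d) (all arrows $g \neq h$ out of $A$ satisfy $g\tmorX{h} = g\tmorXX{h}$) via Lemma \ref{hstrucwellformedlemma0}, so that $\inttype{K_{\tmorX{h}f}}$ genuinely depends only on the single position $h\tmorX{h}f$; second, that $e_h f$ has the required identity type $\Idtype(f\tmorX{h}f, f\tmorXX{h}f)$, which is exactly condition (3)(c) in combination with the definition of $\inttype{\eqsort_K}$; and third, that the substitution along this path lands in the type inhabited by $\tmorXX{h}f$, which is condition (3)(e) (guaranteeing that the ambient types of $f\tmorX{h}f$ and $f\tmorXX{h}f$ agree) once more via Lemma \ref{hstrucwellformedlemma0}. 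Once these three points are assembled, $\Idtype$-formation produces $\inttype{K_f}$ as a type in the current context, completing the induction and hence the lemma. Bookkeeping the implicit $h$-level witnesses mentioned after Definition \ref{HLstructure} is routine since HoTT proves that $\Idtype$-types on $n$-types are $(n{-}1)$-types and that $\typesofhlevel{n}$ is closed under the relevant formers.
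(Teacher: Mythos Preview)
Your proposal is correct and follows essentially the same approach as the paper: $<$-induction on $K \in \NL{\L}$, an inner $<$-induction on $f \in \pcosieve{K}{\L}$ building the telescope, and a case split on the shape of $K_f$ using the structural conditions of Definition~\ref{hsignatures} together with Lemma~\ref{hstrucwellformedlemma0}, applying $\Idtype$-formation and $\transportrule$ as appropriate. The only discrepancy is a notational slip in your transport case: where you write $\Idtype(f\tmorX{h}f, f\tmorXX{h}f)$ and speak of the ambient types of $f\tmorX{h}f$, $f\tmorXX{h}f$, you mean $h\tmorX{h}f$ and $h\tmorXX{h}f$ (the arrow called $h$ is the one $A \to K'$ defining $\tsort_h$, not the outer $f$), but the argument is otherwise identical to the paper's.
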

\begin{proof}
We proceed by $<$-induction on $K$, as in Lemma \ref{strucwellformedlemma}.
Indeed, the proof proceeds exactly as in Lemma \ref{strucwellformedlemma} until we reduce the induction to the step of proving (\ref{deltants}), which now becomes more involved.
So assume that we know that the following context
\begin{equation}
\Delta \eqdef \Gamma_A, (g \colon \inttype{K_g} )_{g<f}
\end{equation}
is well-formed, and therefore in particular that each $\inttype{K_g}$ is a well-formed type in the context preceding it. We need to show that
\begin{equation}\label{deltants2}
\Delta \vdash \inttype{K_f} \: \TTisatype
\end{equation}
is derivable.
To do so we need to consider all the cases for $\inttype{K_f}(\mathbf{q}f)$.
If $K_f \in \NL{\L}$ then the proof proceeds just as in Lemma \ref{strucwellformedlemma}.
 So it remains to check the cases where $K_f$ is a logical sort.
 
If $K_f \equiv\: \eqsort_A$ for some $A \in \L$ 
then we need to show that
\begin{equation}\label{eqnts}
\Delta \vdash \Idtype_{\inttype{K_sf}} (sf,tf) \TTisatype
\end{equation}
is derivable.
Firstly, note that $sf,tf < f$ and (by condition 1.(b) in Definition \ref{hsignatures}) we have $\toplevel{\eqsort_A}= \lbrace s,t \rbrace$. Hence, we have
\begin{equation}
\Delta \equiv \Gamma_A, (g \colon \inttype{K_g})_{g<sf}, sf \colon \inttype{K_{sf}}, tf \colon \inttype{K_{tf}}
\end{equation}
By condition 1.(c) in Definition \ref{hsignatures} we know that $qsf=qtf$ for all $q \in \pcosieve{A}{\L}$ which by Lemma \ref{hstrucwellformedlemma0} implies $\inttype{K_{tf}} \equiv \inttype{K_{sf}}$. Hence, by applying $\Idtype$-formation to
\begin{equation}
\Delta \vdash sf \colon \inttype{K_{sf}} \quad \quad \Delta \vdash tf \colon \inttype{K_{sf}}
\end{equation}
we obtain exactly (\ref{eqnts}).

If $K_f \equiv \rsort_A$ for some $A \in \L$ 
then we need to show that
\begin{equation}\label{rsortnts}
\Delta \vdash \Idtype_{\inttype{K_{rf}}} (rf,\refl_{srf}) \TTisatype
\end{equation}
is derivable.
Firstly, note that $rf< f$ and (by condition 2.(b) in Definition \ref{hsignatures}) $\toplevel{\rsort_A}= \lbrace r \rbrace$. Hence, we have
\begin{equation}
\Delta \equiv \Gamma_A, (g \colon \inttype{K_g})_{g<srf}, srf \colon \inttype{K_{srf}}, rf \colon \inttype{K_{rf}}
\end{equation}
But we know that $\inttype{K_{rf}} \equiv \: \eqsort_A$ and $srf=trf$ (both by Definition \ref{hsignatures}) and therefore
\begin{equation}
\inttype{K_{rf}} \equiv \Idtype_{\inttype{K_{srf}}} (srf,trf) \equiv \Idtype_{\inttype{K_{srf}}} (srf,srf)
\end{equation}
This implies that
\begin{equation}
\Delta \vdash \refl_{srf} \colon \inttype{K_{rf}}
\end{equation}
is derivable.
Hence, by applying $\Idtype$-formation to
\begin{equation}
\Delta \vdash rf \colon \inttype{K_{rf}} \quad \quad \Delta \vdash \refl_{srf} \colon \inttype{K_{rf}}
\end{equation}
we obtain exactly (\ref{rsortnts}).

Finally, if $K_f \equiv \tsort_h$ for some $h \in \morph{\L}$ 
then we need to show that
\begin{equation}\label{tsortnts}
\Delta \vdash \Idtype_{\inttype{K_{\tmorXX{h}f}}} (\tmorXX{h}f, \trans_{e_hf}^{\varmodifier{h\tmorX{h}f}.\inttype{K_{\tmorX{h}f}}[\varmodifier{h\tmorX{h}f}/h\tmorX{h}f]} (\tmorXX{h}f)) \:\TTisatype
\end{equation}
is derivable.
As above, by the relevant conditions in Definition \ref{hsignatures} we have
\begin{equation}
\Delta \equiv \Gamma_A, (g \colon \inttype{K_g})_{g < h\tmorX{h}f}, h\tmorX{h}f \colon \inttype{K_{h\tmorX{h}f}},  h\tmorXX{h}f \colon \inttype{K_{h\tmorXX{h}f}}, \tmorX{h}f \colon \inttype{K_{\tmorX{h}f}}, \tmorXX{h}f \colon \inttype{K_{\tmorXX{h}f}}, e_hf \colon \inttype{K_{e_hf}}
\end{equation}
By Lemma \ref{hstrucwellformedlemma0} we have 
\begin{equation}
\inttype{K_{h\tmorX{h}f}} \equiv \inttype{K_{h\tmorXX{h}f}}
\end{equation}
and since by Definition \ref{hsignatures} we know  
\begin{equation}\label{blurgyy}
se_hf = h\tmorX{h}f \quad \quad te_hf = h\tmorXX{h}f
\end{equation}
we obtain
\begin{equation}\label{pretrans1}
\inttype{K_{e_hf}} \equiv \Idtype_{\inttype{K_{se_hf}}} (se_hf, te_hf) \overset{(\ref{blurgyy})}{\equiv}\Idtype_{\inttype{K_{h\tmorX{h}f}}} (h\tmorX{h}f, h\tmorXX{h}f) 
\end{equation}
Now, since $\tmorX{h}f<f$, by the inductive hypothesis we know that $\Delta \vdash \inttype{K_{\tmorX{h}f}} \TTisatype$ is derivable (by weakening the smaller context in which we already know $\inttype{K_{\tmorX{h}f}}$ to be a type). Hence, since $h$ is top-level for $(\tsort_h)_{\tmorX{h}}$ (i.e. the codomain of $\tmorX{h}$) we can derive
\begin{equation}\label{trans1}
\Delta, \varmodifier{h\tmorX{h}f} \colon \inttype{K_{h\tmorX{h}f}} \vdash \inttype{K_{\tmorX{h}f}} [\varmodifier{h\tmorX{h}f}/h\tmorX{h}f] \TTisatype
\end{equation}
Furthermore, by (\ref{pretrans1}) above we can derive
\begin{equation}\label{trans2}
\Delta \vdash e_hf \colon \Idtype_{\inttype{K_{h\tmorX{h}f}}} (h\tmorX{h}f,h\tmorXX{h}f)
\end{equation}
Finally, since $\tmorX{h}f \colon \inttype{K_{\tmorX{h}f}}$ appears in $\Delta$ we have
\begin{equation}\label{trans3}
\Delta \vdash \tmorX{h}f \colon \inttype{K_{\tmorX{h}f}} [h \tmorX{h}f/\varmodifier{h \tmorX{h}f}]
\end{equation}
But now from (\ref{trans1}),(\ref{trans2}) and(\ref{trans3}) we can apply the transport rule $\transportrule$ to obtain
\begin{equation}
\Delta \vdash \trans_{e_hf}^{\varmodifier{h\tmorX{h}f}.\inttype{K_{\tmorX{h}f}}[\varmodifier{h\tmorX{h}f}/h\tmorX{h}f]} (\tmorXX{h}f) \colon \inttype{K_{\tmorX{h}f}} [\varmodifier{h\tmorX{h}f}/h\tmorX{h}f] [h\tmorXX{h}f/\varmodifier{h\tmorXX{h}f}]
\end{equation}
But now observe that
\begin{align}
\inttype{K_{\tmorXX{h}f}} &\equiv \inttype{K_{\tmorX{h}f}} [h\tmorXX{h}f/h\tmorX{h}f] \\
				       &\equiv \inttype{K_{\tmorX{h}f}} [\varmodifier{h\tmorX{h}f}/h\tmorX{h}f][h\tmorXX{h}f/\varmodifier{h\tmorX{h}f}]
\end{align}
since $\inttype{K_{\tmorXX{h}f}}$ differs from $\inttype{K_{\tmorX{h}f}}$ only in $h\tmorX{h}f$ (by the conditions in Definition \ref{hsignatures}).
Hence, we have
\begin{equation}\label{transfinal1}
\Delta \vdash \trans_{e_hf}^{\varmodifier{h\tmorX{h}f}.\inttype{K_{\tmorX{h}f}}[\varmodifier{h\tmorX{h}f}/h\tmorX{h}f]} (\tmorXX{h}f) \colon \inttype{K_{\tmorXX{h}f}}
\end{equation}
Since we also have
\begin{equation}\label{transfinal2}
\Delta \vdash \tmorXX{h}f \colon \inttype{K_{\tmorXX{h}f}}
\end{equation}
by $\Idtype$-formation on (\ref{transfinal1}) and (\ref{transfinal2}) we obtain exactly (\ref{tsortnts}), and we are done.
\end{proof}

\begin{theo}\label{hstrucwellformed}
$\HStruc{\L}$ is a closed type.
\end{theo}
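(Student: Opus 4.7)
The plan is to reduce the theorem to Lemma \ref{hstrucwellformedlemma} essentially in the same way that Theorem \ref{strucwellformed} was reduced to Lemma \ref{strucwellformedlemma}. Since $\L$ is assumed essentially finite, the set $\NL{\L}$ of non-logical sorts is finite, and the proper order on $\L$ restricts to a total order (after refinement) $K_1 < K_2 < \dots < K_n$ on $\NL{\L}$. Taking $K \eqdef K_n$, Lemma \ref{hstrucwellformedlemma} immediately yields that
\[
\Gamma_\L \eqdef (K_1 \colon T_{K_1}), (K_2 \colon T_{K_2}), \dots, (K_n \colon T_{K_n})
\]
is a well-formed context.

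The remaining step is purely structural: from a derivation of $\Gamma_\L \isacontext$, one reads off for each $i$ the judgment
\[
K_1 \colon T_{K_1}, \dots, K_{i-1} \colon T_{K_{i-1}} \vdash T_{K_i} \TTisatype,
\]
and then $n$ successive applications of $\Sigmatype$-formation, starting from the innermost type $T_{K_n}$ and working outward, collapse the context into the closed type
\[
\Sigmatype\,(K_1 \colon T_{K_1})\,\Sigmatype\,(K_2 \colon T_{K_2}) \cdots \Sigmatype\,(K_n \colon T_{K_n})\,\onetype
\]
(or more directly the iterated $\Sigmatype$-expression $\underset{K \in \NL{\L}}{\Sigmatype}(K \colon T_K)$ according to our notational convention for ordered iterated sums), which is precisely $\HStruc{\L}$.

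I do not anticipate any genuine obstacle here: all the real work --- namely verifying that the $T_K$ are well-formed in their preceding contexts, including the delicate cases where $K_f$ is a logical sort $\eqsort_A$, $\rsort_A$, or $\tsort_h$ and hence $\inttype{K_f}$ unfolds to an $\Idtype$ or a $\trans$ expression --- is already carried out in Lemma \ref{hstrucwellformedlemma}. The only place one must be slightly careful is the appeal to essential finiteness: this is what ensures that $\HStruc{\L}$, as defined in Definition \ref{HLstructure}, is a finite type expression and hence that finitely many applications of $\Sigmatype$-formation suffice. Once this is noted, the proof is a one-line consequence of the lemma.
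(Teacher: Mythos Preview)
Your proposal is correct and follows essentially the same approach as the paper: invoke essential finiteness so that $\NL{\L}$ is finite, apply Lemma \ref{hstrucwellformedlemma} to conclude that $\Gamma_\L = (K \colon T_K)_{K \in \NL{\L}}$ is a well-formed context, and then close it up by iterated $\Sigmatype$-formation. The paper's proof is in fact even terser than yours, omitting the intermediate remarks about reading off the $T_{K_i}$ judgments.
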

\begin{proof}
Since $\L$ is assumed essentially finite, by Lemma \ref{hstrucwellformedlemma} we get immediately that
\[
\Gamma_\L \eqdef (K \colon T_K)_{K \in \NL{\L}}
\]
is well-formed. 
And then we obtain $\HStruc{\L}$ (in the empty context) by successive applications of $\Sigma$-formation.
\end{proof}

\begin{defin}[$\L$-structure]\label{hstructure}
A \textbf{homotopy $\L$-structure} is a term $\M$ of type $\HStruc{\L}$.
\end{defin}

\begin{remark}\label{functoriality}
It is reasonable to now wonder, as the notation suggests, whether $\HStruc{-}$ can be thought of as a ``functor'' $\hSig \rightarrow \U$ (for some universe $\U$).
While it is easy to see that that for full and faithful logical $h$-morphisms $I \colon \L \hookrightarrow \L'$ we will get induced functions $\HStruc{\L'} \rightarrow \HStruc{\L}$ using the projections of the $\Sigmatype$-types, to promote this assignment to a functor would lead us into the usual coherence problems with $\U$ (cf. \cite{ACKinfinite, HoTTeat}). We discuss the prospects for fixing this issue in our concluding remarks.
\end{remark}

We would now like to define for any $\FOLiso$-signature $\L$ the interpretation of $\LTT_{\L}$ into HoTT in a manner analogous to Definition \ref{interpretationFOLDS}. However, we are here faced with the problem, alluded to in Remark \ref{remarkinfinitary}, that even if $\L$ is essentially finite, its underlying FOLDS signature may not be, which means that $\LTT_{\L}$ would contain an infinite number of rules (one rule $\Kform$ for each sort $K$ in $\L$). In particular, for an essentially finite $\L$, this will be the case precisely when $\L$ contains a sort of $h$-level $\infty$. However, in those cases we can still show that $\LTT_{\L}$ is ``finitely presentable'' in a sense that we make precise now.

\begin{defin}
An ML type theory $TT=(S,J,R)$ is \textbf{finitary} if $R$ is finite (as a list). An ML type theory is \textbf{finitely presentable} if it is bi-interpretable with a \textbf{finitary} ML type theory.
\end{defin}

\newcommand{\finpres}[1]{#1^{\text{fp}}}

\begin{prop}\label{FOLisofp}
For any essentially finite $\FOLiso$-signature $\L$, $TT_{\L}$ is finitely presentable.
\end{prop}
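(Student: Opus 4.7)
The plan is to construct an explicit finitary type theory $\finpres{TT_\L}$ and exhibit a bi-interpretation with $TT_\L$. The key observation is that $TT_\L$ can fail to be finitary only because the underlying FOLDS signature $\forget{\L}$ may contain infinitely many sorts, which occurs precisely when $\L$ contains a sort of $h$-level $\infty$. In that case the functor $\mathcal{T}$ generates iterated logical sorts such as $\eqsort_O, \eqsort_{\eqsort_O}, \ldots$ together with their reflexivity and transport analogues. However, by the essential finiteness of $\L$ the set $\NL{\forget{\L}}$ of non-logical sorts is finite, and every logical sort is obtained from a non-logical one by a finite iteration of the three uniform constructions $\eqsort$, $\rsort$, $\tsort$, whose shapes are rigidly prescribed by conditions (1), (2), (3) of Definition \ref{hsignatures}.

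I would define $\finpres{TT_\L}$ to have the same syntactic components and judgment classes as $TT_\L$, the same four structural rules $\emptycon, \conext, \weakening, \axrule$, a rule $\Kform_K$ for each of the finitely many $K \in \NL{\forget{\L}}$, and three additional \emph{generic} formation rules $(\eqsort\text{-form})$, $(\rsort\text{-form})$, $(\tsort\text{-form})$. Each is a partial function on judgment tuples: for example, $(\eqsort\text{-form})$ takes $\Gamma \vdash y \colon A(\mathbf{x})$ and $\Gamma \vdash z \colon A(\mathbf{x})$ with $h(A) \geq 2$ and returns $\Gamma \vdash \eqsort_A(y, z, \mathbf{x}) \isatype$; similarly $(\rsort\text{-form})$ is defined on data witnessing a loop, and $(\tsort\text{-form})$ on data witnessing an appropriate transport situation. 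The variable placements in the output judgments are uniquely determined by Definition \ref{hsignatures}. The resulting rule list is finite, hence $\finpres{TT_\L}$ is finitary.

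To establish bi-interpretability, I would take the identity on syntactic components in both directions, which is legitimate because $TT_\L$ and $\finpres{TT_\L}$ share the same sets $\FOLDSsorts, \FOLDSvariables, \FOLDScontexts$. For the interpretation $\finpres{TT_\L} \to TT_\L$, admissibility of each generic rule is immediate: every specific instance of $(\eqsort\text{-form})$, $(\rsort\text{-form})$, or $(\tsort\text{-form})$ coincides with some $\Kform_K$ of $TT_\L$, since all iterated logical sorts live in $\forget{\L}$. For the reverse direction $TT_\L \to \finpres{TT_\L}$, one must show that $\Kform_K$ is admissible in $\finpres{TT_\L}$ for every logical $K$; this proceeds by induction on the number of nested logical constructors in $K$, applying the appropriate generic rule at each step, with base case $K \in \NL{\forget{\L}}$ already a rule of $\finpres{TT_\L}$. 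Both composites of identity interpretations are trivially the identity, so the theories are isomorphic in $\MLTT$. The main technical obstacle is the bookkeeping of variables when defining the three generic rules, but this is routine since conditions (1), (2), (3) of Definition \ref{hsignatures} completely determine the shape of every logical sort from its input.
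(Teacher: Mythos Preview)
Your proposal is correct and follows essentially the same approach as the paper: define $\finpres{TT_\L}$ by retaining the structural rules and the finitely many $\Kform$ rules for non-logical $K$, replace the infinitely many logical-sort formation rules by three generic schemes $(\eqsort\text{-form})$, $(\rsort\text{-form})$, $(\tsort\text{-form})$, and then observe bi-interpretability via the identity on syntax. The paper in fact writes out the three rules explicitly and then dismisses the bi-interpretation as ``immediately checked''; your sketch of the two admissibility directions (instance-by-instance in one direction, induction on the nesting depth of logical constructors in the other) is a reasonable unpacking of that remark.
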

\begin{proof}
For a given $\FOLiso$ signature $\L$ we define the finitary ML type theory $\finpres{TT_\L}$ as follows.
Its syntax and judgments are exactly the same as $TT_\L$ and it has all the structural rules as well as all the $\Kform$ rules for non-logical $K$. However, instead of $\Kform$ rules for logical $K$ it instead contains the following three rules (in the order in which they are listed since there is dependence between them):
\[
\inferrule
{
\Gamma \vdash x \colon A (\mathbf{z}) \\ \Gamma \vdash y \colon A (\mathbf{z})
}
{
\Gamma \vdash x \eqsort_{A (\mathbf{z})} y \isatype
}
\quad (\eqsort\text{-form})
\]
\[
\inferrule
{
\Gamma \vdash x \colon A (\mathbf{z}) \\ \Gamma \vdash q \colon x\eqsort_{A (\mathbf{z})} x
}
{
\Gamma \vdash \rsort_{A (\mathbf{z})} (q,x) \isatype
}
\quad (\rsort\:\text{-form})
\]
\[
\inferrule
{
\Gamma \vdash y \colon O (\mathbf{z}) \\
\Gamma \vdash y' \colon O (\mathbf{z}) \\\\
\Gamma \vdash \alpha \colon A(\mathbf{w}) \\
\Gamma \vdash \beta \colon A(\mathbf{w})[y'/y] \\\\
\Gamma \vdash p \colon y=_{O (\mathbf{z})}y' \\\\
\Gamma, w_h \colon O (\mathbf{z}) \vdash A (\mathbf{w})[w_h/y] \isatype \\ 
}
{
\Gamma  \vdash \tau_h (p, \alpha, \beta) \isatype
}
\quad (\tsort\text{-form}), \: h \in \L(A, O)
\]
It is then immediately checked that $\finpres{TT_\L}$ and $TT_\L$ are bi-interpretable.
\end{proof}

\begin{cor}
For any essentially finite $\FOLiso$-signature $\L$, $\LTT_{\L}$ is finitely presentable.
\end{cor}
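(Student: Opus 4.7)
The plan is to extend the construction in Proposition \ref{FOLisofp} from $TT_\L$ to $\LTT_\L$ by observing that the logical layer added on top is already finitary. Recall that $\LTT_\L$ extends $\TT_\L$ by adding the formula syntactic class $\FOLDSformulas$, the judgment form $\Gamma \vdash \phi \isaformula$, the weakening rule $\formwk$, and the finitely many formation rules $\botform$, $\topform$, $\starform$ (three instances for $\wedge, \vee, \rightarrow$), and $\Qform$ (two instances for $\exists, \forall$). Crucially, the only rules of $\LTT_\L$ that are parametrized by $K \in \ob\L$ are those inherited from $\TT_\L$, namely the $\Kform$ rules.

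First I would define $\finpres{\LTT_\L}$ by taking $\finpres{TT_\L}$ from the proof of Proposition \ref{FOLisofp} and freely adding the syntactic class $\FOLDSformulas$, the judgment form $\FOLDScontexts \times \FOLDSformulas$, and the finite set of logical rules $\formwk, \botform, \topform, \starform, \Qform$. By construction $\finpres{\LTT_\L}$ is finitary: the only rule schemes indexed by objects of $\L$ are the non-logical $\Kform$ rules, which are finite in number since $\NL(\L)$ is finite by essential finiteness, plus the three schematic rules $(\eqsort\text{-form}), (\rsort\text{-form}), (\tsort\text{-form})$ that together handle all logical sorts.

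Next I would exhibit a bi-interpretation $\finpres{\LTT_\L} \cong \LTT_\L$ in $\MLTT$. Both theories have the same syntactic components, so I take the identity on each $S_i$. What must be verified is that each rule of one theory is admissible in the other. For the structural and logical rules this is immediate since they are shared verbatim. For the $\Kform$ rules with $K$ a logical sort, admissibility in $\finpres{\LTT_\L}$ follows exactly as in Proposition \ref{FOLisofp}: the three schematic rules $(\eqsort\text{-form}), (\rsort\text{-form}), (\tsort\text{-form})$ cover all instances, using the defining conditions on $\eqsort_K, \rsort_K, \tsort_f$ from Definition \ref{hsignatures} to match the premises. Conversely, the three schematic rules are admissible in $\LTT_\L$ because each of them is a specific instance of the corresponding $\Kform$ rule for $K = \eqsort_A$, $\rsort_A$, or $\tsort_h$.

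The verification is essentially routine given Proposition \ref{FOLisofp}, so no genuine obstacle is expected; the only point requiring care is bookkeeping to ensure that the admissibility on the $\TT_\L$-fragment established in Proposition \ref{FOLisofp} transports to the enlarged signature of judgments in $\LTT_\L$, but this is automatic because the new rules do not interact with the $\Kform$-rules (they introduce formulas, not sorts). Hence $\finpres{\LTT_\L}$ and $\LTT_\L$ are isomorphic in $\MLTT$, and since $\finpres{\LTT_\L}$ is finitary, $\LTT_\L$ is finitely presentable, as required.
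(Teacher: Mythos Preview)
Your proposal is correct and takes essentially the same approach as the paper: both observe that $\LTT_\L$ adds only a fixed finite set of logical rules to $TT_\L$, so one defines $\finpres{\LTT_\L}$ by extending $\finpres{TT_\L}$ with those rules and the bi-interpretation from Proposition~\ref{FOLisofp} extends immediately. The paper's proof is a two-line remark to this effect; you have spelled out the details, but there is no substantive difference.
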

\begin{proof}
$\LTT_\L$ only adds a finite number of rules to $TT_\L$ regardless of whether or not $\L$ is essentially finite. So we can just define $\finpres{\LTT_\L}$ just as $\LTT_\L$ but extending $\finpres{TT_\L}$ instead of $TT_\L$.
\end{proof}

\begin{notation}
For any $\FOLiso$-signature $\L$ we will now write $TT_\L$ and $\LTT_\L$ for $\finpres{TT_\L}$ and $\LTT_\L$ for $\finpres{\LTT_\L}$ and treat them as the same type theory.
\end{notation}

With this in mind the interpretation of $\FOLiso$-contexts, context morphisms, formulas and sequents for a given homotopy $\L$-structure $\M$ proceeds as in Definition \ref{interpretationFOLDS} 
with extra stipulations for the logical sorts. 

\begin{defin}[Interpretation of $\LTT_\L$ in HoTT]\label{FOLisointerpretation}
Let $\L$ be a $\FOLiso$-signature.
The \textbf{depth} of the expressions in the raw syntax of $\LTT_\L$ is defined as in Definition \ref{interpretationFOLDS} with the following extra clauses:
\begin{align*}
\FOLDSdepth{\eqsort_K (\mathbf{x})} &= \FOLDSdepth{K}+1 \\
\FOLDSdepth{\rsort_K (\mathbf{x})} &= \FOLDSdepth{K}+2 \\
\FOLDSdepth{\tsort_h (\mathbf{x})} &= \FOLDSdepth{K}+3 \\
\end{align*}
The \textbf{interpretation of $\LTT_\L$ into HoTT}
 consists of the function $\interpretation{-}_c$ defined just as in Definition \ref{interpretationFOLDS} and the new functions $\interpretation{-}_s$ and $\interpretation{-}_f$ defined as follows:
\begin{align*}
\interpretation{-}_s \colon \FOLDSsorts &\rightarrow \MLTTtypes \\
                                K (\mathbf{x})          &\mapsto     \TTElapp{K^\M (\mathbf{x})}  \quad \quad \quad \quad (K \in \NL{\L})    \\   
                                \eqsort_K (\mathbf{x})      &\mapsto  \Idtype_{\interpretation{K(x_{\mathbf{p}s})}_s} (x_s,x_t)          \\
                                \rsort_K (\mathbf{x})         &\mapsto  \Idtype_{ \Idtype_{\interpretation{K(x_{\mathbf{p}sr})}_s} (x_{sr},x_{tr})} (x_r, \refl_{x_{sr}})          \\  
                                \tsort_h (\mathbf{x})          &\mapsto  \Idtype_{\interpretation{\dom{h} (x_{\mathbf{p}\tmorXX{h}})}_s} (x_{\tmorXX{h}}, \trans_{\tmorpath{h}}^{\varmodifier{x_{h\tmorX{h}}}. \interpretation{\dom{h}(x_{\mathbf{p}\tmorX{h}})}_s [\varmodifier{x_{h\tmorX{h}}}/x_{h\tmorX{h}}]} (x_{\tmorX{h}})          \\   
\end{align*}
\begin{align*}
\interpretation{-}_f \colon \FOLDSformulas &\rightarrow \MLTTtypes \\
                                \bot         &\mapsto     \zerotype                 \\ 
                                \top          &\mapsto     \onetype                 \\
                                \phi \wedge \psi         &\mapsto   \interpretation{\phi}_f \times \interpretation{\psi}_f        \\
                                \phi \vee \psi         &\mapsto   \truncated{\interpretation{\phi}_f + \interpretation{\psi}_f } \\ 
                               \phi \rightarrow \psi         &\mapsto \interpretation{\phi}_f \rightarrow \interpretation{\psi}_f    \\
                                \exists x \colon K. \phi &\mapsto \truncated{\Sigmatype \:(x \colon \interpretation{K}_s)\: \interpretation{\phi}_f} \\
                                \forall x \colon K. \phi &\mapsto \Pitype \:(x \colon \interpretation{K}_s) \: \interpretation{\phi}_f
\end{align*}
The interpretation of the judgments of $\LTT_\L$ is then just as in Definition \ref{interpretationFOLDS}.
\end{defin}

\begin{remark}
To make it clear, the two changes in Definition \ref{FOLisointerpretation} over Definition \ref{interpretationFOLDS} is that $\interpretation{-}_s$ is defined by induction on the level of $K$ taking into account the logical sorts and that in $\interpretation{-}_f$ we are now truncating $\vee$ and $\exists$.
\end{remark}

\begin{theo}[Correctness of the HoTT Interpretation]\label{FOLisocorrectness}
If $\mathcal{S}$ is a derivable judgment in $\LTT_\L$ then $\interpretation{S}$ is a derivable judgment in MLTT. 
\end{theo}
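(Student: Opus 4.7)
The plan is to follow the structure of the proof of Theorem \ref{FOLDSsoundness} almost verbatim, proceeding by induction on the depth of derivations in $\LTT_\L$. First I would verify that the extended depth function in Definition \ref{FOLisointerpretation} is still correct in the sense that for every rule of $\LTT_\L$ the depth of the expression below the line strictly exceeds the depth of any expression above the line; for the three new logical sort-formation rules ($\eqsort$-form, $\rsort$-form, $\tsort$-form) introduced in Proposition \ref{FOLisofp} this is immediate from the extra clauses added to $\FOLDSdepth{-}$.

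Next, I would observe that all structural rules and all logical formation rules (for $\wedge, \vee, \rightarrow, \forall, \exists, \top, \bot$) together with the $\Kform$ rules for non-logical $K$ are treated exactly as in Theorem \ref{FOLDSsoundness}, with the only modification being that under $\interpretation{-}_f$ the interpretations of $\vee$ and $\exists$ are now wrapped in a propositional truncation; since propositional truncation is a type-former of HoTT, the corresponding formation rule is immediate. So the only genuinely new work lies in verifying the three logical sort-formation rules.

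For the $(\eqsort\text{-form})$ and $(\rsort\text{-form})$ rules, the interpretations reduce to instances of $\Idtype$-formation in HoTT applied to the interpretations of the premises. The only subtlety is ensuring that the two terms being equated indeed live in the same interpreted type; this is guaranteed by the relations $fs_K = ft_K$ and $s_K\rmor_K = t_K\rmor_K$ from Definition \ref{hsignatures}, and is essentially what was used in Lemma \ref{hstrucwellformedlemma0}. For $(\rsort\text{-form})$ we additionally need that $\refl_{x_{sr}}$ inhabits the expected identity type, which follows from $\Idtype$-introduction.

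The hard part will be the $(\tsort\text{-form})$ rule, whose interpretation requires applying $\Idtype$-formation to the term $\tmorXX{h}f$ and the transport term $\trans_{\tmorpath{h}}^{\ldots}(\tmorX{h}f)$, and then checking these lie in the same type. I would essentially recycle the derivation performed at the end of the proof of Lemma \ref{hstrucwellformedlemma} for the $\tsort_h$ case, which already constructs the required transport term step-by-step using the transport rule $\transportrule$, the $\alpha$-equivalence of types differing only in the transported variable, and the relations from condition 3 of Definition \ref{hsignatures}. Once the appropriate context morphism and substitution bookkeeping is handled (using Corollary \ref{FOLDSsoundnesscor2} to translate $\L$-context morphisms to type-theoretic ones so that substitution commutes with interpretation), the final application of $\Idtype$-formation closes the case. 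The details are routine but notationally cumbersome, and I would omit them in favor of citing the analogous argument in Lemma \ref{hstrucwellformedlemma}.
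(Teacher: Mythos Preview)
Your proposal is correct and follows essentially the same approach as the paper: induction on depth, reducing the structural and non-logical rules to Theorem~\ref{FOLDSsoundness}, handling $\vee$ and $\exists$ via truncation-formation, and checking the three new logical sort-formation rules by invoking $\Idtype$-formation, $\Idtype$-introduction, and $\transportrule$. The paper's treatment of $(\tsort\text{-form})$ is actually more terse than you anticipate---it simply writes out the interpreted rule and observes it follows from $\transportrule$ and $\Idtype$-formation---so your plan to recycle the detailed argument of Lemma~\ref{hstrucwellformedlemma} is more than sufficient.
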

\begin{proof}
We proceed by induction on the depth of the expressions of $\LTT_\L$. By inspection we can see that the depth function in Definition \ref{FOLisointerpretation} is once again correct in the sense that for every rule of $\LTT_\L$ the depth of the expression below the line is strictly greater than the depth of any of the expressions above the line. 
The proof thus can proceed by induction on the complexity of derivations of $\LTT_\L$. The only rules that we need to check whose proof is not the same as in Theorem \ref{FOLDSsoundness} are the formation rules for the logical sorts and the formula formation rules for $\vee$ and $\exists$ which now involve truncation. We take them in turn:

\bigskip
\noindent $(\eqsort\text{-form})$: 
We have the following:
\begin{align*}
\interpretation{(\eqsort\text{-form})} &= \inferrule{\interpretation{\Gamma \vdash x \colon A (\mathbf{z})} \\ \interpretation{\Gamma \vdash y \colon A (\mathbf{z})}}{\interpretation{\Gamma \vdash x \eqsort_{A (\mathbf{z})} y \isatype}} \\ 
&=\inferrule{\interpretation{\Gamma} \vdash x \colon \interpretation{A (\mathbf{z})} \\ \interpretation{\Gamma} \vdash y \colon \interpretation{A (\mathbf{z})}}{\interpretation{\Gamma} \vdash  \Idtype_{\interpretation{A(\mathbf{z})}} (x,y) \TTisatype}
\end{align*}
But this is exactly $\Idtype$-formation.

\bigskip
\noindent $(\rsort\text{-form})$: 
We have the following:
\begin{align*}
\interpretation{(\rsort\text{-form})} &= \inferrule{\interpretation{\Gamma \vdash x \colon A (\mathbf{z})} \\ \interpretation{\Gamma \vdash q \colon x \eqsort_{A(\mathbf{z})} x}}{\interpretation{\Gamma \vdash \rsort_{A (\mathbf{z})}(q,x) \isatype}} \\ 
&=\inferrule{\interpretation{\Gamma} \vdash x \colon \interpretation{A (\mathbf{z})} \\ \interpretation{\Gamma} \vdash q \colon \interpretation{x \eqsort_{A(\mathbf{z})} x}}{\interpretation{\Gamma} \vdash  \Idtype_{ \Idtype_{\interpretation{A(\mathbf{z})}} (x,x)} (q, \refl_{x}) \TTisatype} \\
&=\inferrule{\interpretation{\Gamma} \vdash x \colon \interpretation{A (\mathbf{z})} \\ \interpretation{\Gamma} \vdash q \colon \Idtype_{\interpretation{A(\mathbf{z})}} (x,x)}{\interpretation{\Gamma} \vdash  \Idtype_{ \Idtype_{\interpretation{A(\mathbf{z})}} (x,x)} (q, \refl_{x}) \TTisatype}
\end{align*}
But this follows by an application of $\Idtype$-introduction on $\interpretation{\Gamma} \vdash x \colon \interpretation{A (\mathbf{z})}$ followed by $\Idtype$-formation.

\bigskip
\noindent $(\tsort\text{-form})$: 
We have the following:
\begin{align*}
\interpretation{(\tsort\text{-form})} &= 
\inferrule
{
\interpretation{\Gamma \vdash y \colon O (\mathbf{z})} \\
\interpretation{\Gamma \vdash y' \colon O (\mathbf{z})} \\
\interpretation{\Gamma \vdash \alpha \colon A(\mathbf{w})} \\
\interpretation{\Gamma \vdash \beta \colon A(\mathbf{w})[y'/y]} \\\\
\interpretation{\Gamma \vdash p \colon y\eqsort_{O (\mathbf{z})}y'} \\
\interpretation{\Gamma, w_h \colon O (\mathbf{z}) \vdash A (\mathbf{w})[w_h/y] \isatype}
}
{
\interpretation{\Gamma  \vdash \tau_h (p, \alpha, \beta) \isatype}
}
\\ 
&=
\inferrule
{
\interpretation{\Gamma} \vdash y \colon \interpretation{O (\mathbf{z})} \\
\interpretation{\Gamma} \vdash y' \colon \interpretation{O (\mathbf{z})} \\
\interpretation{\Gamma} \vdash \alpha \colon \interpretation{A(\mathbf{w})} \\
\interpretation{\Gamma} \vdash \beta \colon \interpretation{A(\mathbf{w})[y'/y]} \\\\
\interpretation{\Gamma} \vdash p \colon \interpretation{y\eqsort_{O (\mathbf{z})}y'} \\
\interpretation{\Gamma}, w_h \colon \interpretation{O (\mathbf{z})} \vdash \interpretation{A (\mathbf{w})[w_h/y] \isatype}
}
{
\interpretation{\Gamma}  \vdash  
\Idtype_{\interpretation{A (\mathbf{w})}} (\beta, \trans_{p}^{w_h. \interpretation{A(\mathbf{w}) } [w_h/y]} (\alpha))   
 \TTisatype
}
\end{align*}
But this follows by an application of $\transportrule$ and $\Idtype$-formation.

\bigskip
\noindent $(\vee\texttt{-form})$: 
Given that $\interpretation{\Gamma}$ is a well-formed context and $\interpretation{\phi}$ and $\interpretation{\psi}$ are types in that context, then by $+$-formation we get 
\begin{equation}
\interpretation{\Gamma} \vdash \interpretation{\phi} + \interpretation{\psi} \TTisatype
\end{equation}
and then by an application of truncation we get
\begin{equation}
\interpretation{\Gamma} \vdash \truncated{\: \interpretation{\phi} + \interpretation{\psi} \:} \TTisatype
\end{equation}
as required.

\bigskip
\noindent $(\exists\texttt{-form})$: 
Given that $\interpretation{\Gamma, x \colon K} =  \interpretation{\Gamma}, x \colon \interpretation{K}$ is a well-formed context and $\interpretation{\phi}$ is a type in that context, then by $\Sigmatype$-formation we get 
\begin{equation}
\interpretation{\Gamma} \vdash \Sigmatype \: (x \colon \interpretation{K}) \: \interpretation{\phi} \TTisatype
\end{equation}
and then by an application of truncation we get
\begin{equation}
\interpretation{\Gamma} \vdash \truncated{ \: \Sigmatype \: (x \colon \interpretation{K}) \: \interpretation{\phi} \:} \TTisatype
\end{equation}
as required.

\end{proof}

\begin{remark}
Theorem \ref{FOLisocorrectness} guarantees that for any categorical semantics of (any type theory HoTT$^+$ extending) HoTT for which the initiality conjecture has been established there will be a model of $\LTT_\L$ in any model of those categorical semantics, and in particular in any model in $\infty$-toposes. However, in order to make this fact a fully rigorous corollary, we need to have a definition of a ``model of $\LTT_\L$'' in terms of some yet-to-be-defined categorical semantics for $\LTT_\L$. We plan on giving such definitions in a follow-up to this paper. With those definitions in hand Theorem \ref{FOLisocorrectness} establishes that the term model (assuming it exists) of any HoTT$^+$ will contain a model of (the categorical semantics for) $\LTT_\L$, for any $\FOLiso$-signature $\L$.
\end{remark}

Corollaries \ref{FOLDSsoundnesscor} and \ref{FOLDSsoundnesscor2} 
now carry over to $\FOLiso$ verbatim. 
Similarly, the notion of extension, realization, satisfaction, model and type of models in Definitions \ref{FOLDSextension}, \ref{FOLDSsatisfaction} and \ref{FOLDStypeofmodels} carry over immediately to $\FOLiso$, as does the notion of a theory and (classical) entailment in Definitions \ref{FOLDStheory} and \ref{entailment}.

That said, we now must record some results that are important in the setting of HoTT that do not necessarily arise in the more traditional setting of MLTT.

\begin{cor}\label{formulasaremerepropositions}
For any formula-in-context $\Gamma.\phi$ and any $\L$-structure $\M$ we have that $\phi^\M$ is a mere proposition (in context $\Gamma^\M$).
\end{cor}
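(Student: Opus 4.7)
The plan is to proceed by structural induction on the formula-in-context $\Gamma.\phi$, using the standard HoTT closure properties of mere propositions under the relevant type constructors. By Theorem \ref{FOLisocorrectness} (or equivalently the analogue of Corollary \ref{FOLDSsoundnesscor} for $\FOLiso$), we already know that $\phi^\M$ is a well-formed type in context $\Gamma^\M$, so the only remaining task is to exhibit a term of $\mathtt{isProp}(\phi^\M)$, i.e.\ a witness that any two inhabitants of $\phi^\M$ are equal.

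For the base cases, $\bot^\M \equiv \zerotype$ is a mere proposition since it has no inhabitants, and $\top^\M \equiv \onetype$ is contractible and hence a mere proposition. For the inductive cases involving the connectives and quantifiers, I would appeal to the following standard lemmas of HoTT (all of which are proved in Chapter 3 of \cite{HTT}): (i) the propositional truncation $\truncated{A}$ is always a mere proposition, which handles both $\vee$ and $\exists$ immediately, regardless of whether the sub-formulas are propositions; (ii) the product $P \times Q$ of two mere propositions is a mere proposition, handling $\wedge$; (iii) a dependent function type $\Pitype (x \colon A) P(x)$ is a mere proposition whenever each $P(x)$ is a mere proposition, which handles both $\rightarrow$ (as a non-dependent instance) and $\forall$.

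Concretely the induction runs as follows. For $\wedge$, by the inductive hypothesis both $\phi^\M$ and $\psi^\M$ are mere propositions, so by (ii) $\phi^\M \times \psi^\M \equiv (\phi \wedge \psi)^\M$ is a mere proposition. For $\rightarrow$, by the inductive hypothesis $\psi^\M$ is a mere proposition, so by (iii) $\phi^\M \rightarrow \psi^\M \equiv (\phi \rightarrow \psi)^\M$ is a mere proposition (note that we do not need $\phi^\M$ to be a proposition here). For $\forall x \colon K. \phi$, the inductive hypothesis gives that $\phi^\M$ is a mere proposition in context $\Gamma^\M, x \colon \interpretation{K}$, and then (iii) applied fibrewise gives that $\Pitype (x \colon \interpretation{K}) \phi^\M$ is a mere proposition. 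For $\vee$ and $\exists$ the interpretation is explicitly a propositional truncation, so (i) applies directly with no appeal to the inductive hypothesis.

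I do not foresee a significant obstacle: the key design decision that makes this work is already built into Definition \ref{FOLisointerpretation}, where $\vee$ and $\exists$ are interpreted via $\truncated{-}$. Without this truncation, the proposition would fail (since $+$-types and $\Sigmatype$-types of mere propositions are not in general mere propositions), so the corollary is essentially the semantic justification for inserting truncations in those two clauses. The only minor subtlety is being careful that the induction on formulas goes through uniformly over all contexts $\Gamma$, which is handled by proving the stronger statement that $\Gamma^\M \vdash \mathtt{isProp}(\phi^\M)$ is inhabited, and using the fact that each of the closure lemmas (i)--(iii) has a straightforward context-indexed version.
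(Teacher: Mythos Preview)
Your proposal is correct and takes essentially the same approach as the paper: the paper's proof is simply the one-line observation that the interpretation uses only type formers that preserve mere propositions, with the two exceptions ($\Sigmatype$ and $+$) explicitly truncated. Your structural induction is a fully spelled-out version of exactly this argument.
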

\begin{proof}
Immediate by the definition of the interpretation of formulas, since the only type-formers that do not preserve $h$-level are $\Sigmatype, +$ and we truncate them.
\end{proof}


\begin{cor}\label{hsigsameasfolisosig}
Let $\L$ be an $h$-signature.
For any $\L$-theory $\T$ let $\T^{\cong}$ be the same theory over $\L^{\cong}$. Then $\Mode{\T} = \Mode{\T^{\cong}}$
\end{cor}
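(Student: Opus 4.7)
The plan is to unpack the definition of the type of models and reduce the equality to two simpler facts: the equality of structure types and the pointwise equality of sequent interpretations.

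By Definition \ref{FOLDStypeofmodels} (carried over to $\FOLiso$), we have
\[
\Mod{\T} \eqdef \Sigmatypecom{\M \colon \HStruc{\L}}{\Pitypecom{i \colon \mathbb{N}}{\sigma_i^\M}}
\]
and likewise for $\Mod{\T^{\cong}}$ with $\HStruc{\L^{\cong}}$. For these two $\Sigmatype$-types to be definitionally equal it suffices to establish (i) $\HStruc{\L} \equiv \HStruc{\L^{\cong}}$, and (ii) for each $\M$ and each sequent $\sigma_i$ (as a raw syntactic expression shared by $\T$ and $\T^{\cong}$), that the interpretation $\sigma_i^\M$ taken over $\L$ agrees definitionally with the interpretation taken over $\L^{\cong}$.

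For (i), this is exactly the content of Proposition \ref{hstrucsameasfolisostruc}, which I would simply invoke: the objects of $\L^{\cong}$ that are not already in $\L$ are purely logical sorts that are not codomains of any non-logical arrow, and hence contribute no extra $\Sigmatype$-component to $\HStruc{\L^{\cong}}$.

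For (ii), I would proceed by a routine structural induction following the clauses of Definition \ref{FOLisointerpretation}, showing that for any $\L$-context $\Gamma$, $\L$-sort $K(\mathbf{x})$, and $\L$-formula $\phi$, the interpretations $\interpretation{\Gamma}^\M$, $\interpretation{K(\mathbf{x})}^\M$ and $\interpretation{\phi}^\M$ are definitionally equal whether we regard $\M$ as an $\L$-structure or as an $\L^{\cong}$-structure. For non-logical sorts $K$ the interpretation uses the projection $K^\M$, which is the same term under the identification from (i). For logical sorts ($\eqsort_K$, $\rsort_K$, $\tsort_h$), the interpretation clauses in Definition \ref{FOLisointerpretation} are uniform: they specify identity types, reflexivity identity types, and transport equations in terms of interpretations of strictly lower-level sorts, and therefore agree by the inductive hypothesis. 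The logical connectives and quantifiers induct trivially since their clauses make no reference to the signature beyond their subformulas.

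The only subtlety — and the closest thing to an obstacle — is making sure that logical sorts which happen to already appear in $\L$ (rather than being added by the $\repletion{}$-completion) are interpreted by the same formula in both cases. But this is immediate from Definition \ref{FOLisointerpretation}, since the interpretation clause for, e.g., $\eqsort_K$ is applied uniformly whenever the sort is named $\eqsort_K$, regardless of whether it was present in $\L$ originally or added by $\repletion{}$. Combining (i) and (ii), the two iterated $\Sigmatype$-types are $\alpha$-equivalent as raw expressions, hence equal, which gives $\Mode{\T} = \Mode{\T^{\cong}}$.
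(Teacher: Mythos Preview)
Your proof is correct and follows the same approach as the paper, which dispatches the corollary in one line as ``Immediate from Proposition \ref{hstrucsameasfolisostruc}.'' Your part (ii) spells out what the paper leaves implicit: that since the sequents of $\T$ use only sorts already in $\L$, and the interpretation clauses of Definition \ref{FOLisointerpretation} are driven purely by the syntax of the sort and the projections $K^\M$, the sequent interpretations coincide once the structure types do.
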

\begin{proof}
Immediate from Proposition \ref{hstrucsameasfolisostruc}.
\end{proof}

%

We conclude this section with several examples illustrating the semantics of $\FOLiso$.


\begin{exam}\label{LKone}
Let $\L_{K,1}$ be the $h$-signature with one object of $h$-level $1$.
Then an $\L_{K,1}$-structure is simply a mere proposition $P \colon \textbf{Prop}_{\mathcal{U}}$. 
%
Similarly, if we write $\L_{K,n}$ for the $\FOLiso$-$n$-signature with one object of $h$-level $n$, a $\L_{K,n}$-structure is a type of $h$-level $n$.
\end{exam}

\begin{exam}
An $\Lgraph$-structure consists of an $h$-set $O \colon \SetU$ and a mere relation $A  \colon O \rightarrow O \rightarrow \PropU$. 
%
\end{exam}

\begin{exam}
An $\Lrg^{321}$-structure $\M$ consists of the following data:
\begin{align*}
I^\M &\colon \underset{x \colon O^\M}{\Pi} \: A(x,x) \rightarrow \textbf{Prop}_{\mathcal{U}} \\
A^\M &\colon O^\M \rightarrow O^\M \rightarrow \textbf{Set}_{\mathcal{U}} \\
O^\M &\colon \textbf{Gpd}_{\mathcal{U}}
\end{align*}
Note that an $(\Lrg^{321})^{\cong}$-structure consists of exactly the same data since the logical sorts do not appear in $\HStruc{\Lrg}$ (since no non-logical sort depends on them).
\end{exam}

\begin{exam} 
In $\Lrg$ if we are given the formula $\phi \equiv \exists \tau \colon I(x,f). \top$ then its interpretation in some $\Lrg$-structure $\M$ will be given by 
$
\vert \vert \underset{\tau \colon I^\M (x,f)}{\Sigma} \textbf{1} \vert \vert
$
which is of course equivalent to $\vert \vert I^\M (x,f) \vert \vert$. 
The interpertation of $\phi$ in $\M$ thus consists of the following judgement
\[
x\colon O^\M, f \colon A^\M(x,x) \vdash \vert \vert I^\M (x,f) \vert \vert  \TTisatype
\]
where we are abusing notation in using $I^\M (x,f)$ for what really is its first projection since strictly speaking $I^\M$ was defined as a dependent function into $\textbf{Prop}_{\mathcal{U}}$.
Its extension is
\[
\Extension{\Gamma}{\M}(\phi) = \underset{
\begin{subarray}
\: x \colon O^\M \\
f \colon A^\M(x,x)
\end{subarray}
}{\Sigmatype} \truncated{I^\M (x,f)}
\]
and $\M \models \phi[\mathbf{a}/\Gamma]$ iff there is a derivable term
\[
\mathbf{a} \colon \Extension{\Gamma}{\M}(\phi)
\]
\end{exam}

\begin{exam}
Consider $\Lgraph^=$:
\[
\xymatrix{
1 &A \ar@/_/[d] \ar@/^/[d] &=_O \ar@/_/[ld] \ar@/^/[ld] \\
2 &O
}
\]
$\Lgraph$-formulas are then exactly (when suitably translated) the formulas of first-order logic with equality for a single-sorted signature $\Sigma$ with a single binary predicate $A$. 
Semantically, an $\Lgraph$-structure $\M$ consists of an $h$-set $O^\M \colon \textbf{Set}_{\mathcal{U}}$ and a dependent type $A^\M \colon O^\M \rightarrow O^\M \rightarrow \textbf{Prop}_{\mathcal{U}}$.
This is all entirely analogous to $\Sigma$-structures in traditional set-theoretic semantics. 
$\FOLeq$ can thus be thought of as the ``classical limit'' of $\FOLiso$.
\end{exam}

\begin{exam}\label{fullexhott}
Consider the following $\Lrg$-sentence
\[
\phi \equiv \forall x \colon O \exists f \colon A(x,x). I(f)
\]
and let $\M$ be the $\Lrg$-1-structure given by the following data
\[
\langle \textbf{Set}_{\mathcal{U}}, \lambda x.\lambda y.x \rightarrow y, \lambda x. \lambda f. \text{Id}_{x\rightarrow x} (f,1_x) \rangle
\]
Then $\M$ is a model of $\phi$. To see this, observe that
\[
\M \models \forall x \colon O \exists f \colon A(x,x). I(f)
\]
since
\[
\varnothing \vdash \lambda x.(1_x, \mathtt{refl}_{1_x}) \colon \underset{x \colon \textbf{Set}_{\mathcal{U}}}{\Pi} \vert \vert \underset{f \colon x \rightarrow x}{\Sigma} \text{Id}_{x \rightarrow x}(f,1_x) \vert \vert
\] 
is derivable in HoTT and since the interpretation of $\phi$ is given by
\[
(\forall x \colon O \exists f \colon A(x,x). I(f))^\M \equiv \underset{x \colon \textbf{Set}_{\mathcal{U}}}{\Pi} \vert \vert \underset{f \colon x \rightarrow x}{\Sigma} \text{Id}_{x \rightarrow x}(f,1_x) \vert \vert
\]
\end{exam}

\begin{exam}
If we define $\Tcircle$ as the $\Lcircle$-theory which consists of the single axiom
\[
\forall x \colon O. \forall \tau \colon \mathtt{base} (x). \exists l \colon x=_Ox. \mathtt{loop}(l, \tau)
\]
we can easily deduce that
\[
\Mod{\Tcircle} \simeq \Sigmatype \: (O \colon \U) \: (\mathtt{base} \colon O) \: \Idtype_O (\mathtt{base},\mathtt{base})
\]
The higher inductive type corresponding to the circle $S^1$ can then be understood as an initial object in an appropriate precategory structure for $\Mod{\Tcircle}$ as has been documented in \cite{Sojakova}. 
More generally, the existence of initial models for $\FOLiso$-theories will be equivalent to the existence of certain higher inductive types.
This example thus illustrates the essence of the connection of $\FOLiso$ with the problem of finding a general specification of higher inductive types, a connection which seems to us worth pursuing.
\end{exam}


\section{Deductive System for $\FOLiso$}\label{nlogicProof}

We will now describe a proof system $\Diso$ for $\FOLiso$ and prove a soundess theorem for it with respect to our homotopy semantics.
The rules of $\Diso$ will be the rules of $\DFOLDS$ supplemented with an axiom $\Eqintro$ that postulates the existence of the ``trivial isomorphism'' in terms of the reflexivity sort, an axiom $\trrule$ that postulates that ``transporting along the trivial isomorphism does nothing'', and a rule $\Jrule$ governing the isomorphism sorts that can be understood as a ``first-order'' version of the $\Idtype$-elimination rule of type theory.

\begin{notation}
In the presentation below, we will assume all the sequents are well-formed. In particular, this will eliminate all ambiguity regarding which variables depend on which others, e.g. in the $\trrule$ rule below.
We will generally suppress (implicit) variable dependencies, e.g. writing simply $x=_Ky$ without making explicit the variables on which $K$ might depend.
We will also drop subscripts for equality, reflexivity and transport sorts when the sort to which they refer is clear from the context, and write simply $\eqsort, \rsort$ and $\tsort$.
For a formula $\phi$ in context $\Gamma, x \colon K, y \colon K, p \colon x =_K y$ we will use the notation $\phi[x,x,q]$ (e.g. in the (J) rule below) to denote the formula $\delta (\phi)$ obtained by substitution along the ``contraction'' context morphism $$\delta \colon \big( \Gamma, x \colon K, q \colon x =_K x \big) \conmorphism \big( \Gamma, x \colon K, y \colon K, p \colon x =_K y \big)$$
\end{notation}

%

\begin{defin}
The \textbf{proof system $\Diso$ for $\FOLiso$} for any given $\FOLiso$-signature $\L$ consists of the rules of (intuitionistic or classical, regular, coherent or full) $\DFOLDS$ together with the following three rules:
\[
\inferrule
{
\!
}
{
\Gamma, x \colon K \: \vert \: \theta \seqimplies \exists q \colon x\eqsort x. \rsort (q)
}
\quad  \Eqintro 
\]


%

%

%


%

\bigskip



%

\[
\inferrule
{
\:
}
{
\Gamma, x \colon K, \alpha \colon A, q \colon x \eqsort x \: \vert \: \rsort(q) \wedge \theta \seqimplies \tsort  (q, \alpha, \alpha)
}
\quad \trrule
\]

\bigskip

\[
\inferrule
{
\Gamma, x \colon K, q \colon x \eqsort x \: \vert \: 
\rsort(q) \wedge \theta[x,x,q] \seqimplies 
\phi[x,x,q]
}
{
\Gamma, x \colon K, y \colon K, p \colon x\eqsort y \: \vert \:  \theta \seqimplies \phi
}
\quad  \Jrule
\]

\bigskip

If we also include the law of the excluded middle (LEM) as an axiom then we denote the corresponding proof system by $\Disocl$. 
\end{defin}

\def\FOLisoentails{\vdash_{\cong}}
\def\FOLisoentailscl{\vdash^{\text{cl}}_{\cong}}
\def\FOLisoentailsboth{\vdash^{(\text{cl})}_{\cong}}

\def\modelsHoTT{\models_{h}}
\def\modelsHoTTcl{\models^{\text{cl}}_{h}}
\def\modelsHoTTboth{\models^{(\text{cl})}_{h}}

\begin{notation}
When what we say applies to both $\Diso$ and $\Disocl$ we will use the notation $\Diso^{(\text{cl})}$.
Analogous to Definition \ref{entailment} we write $\FOLisoentails$ (resp. $\FOLisoentailscl$) for entailment in $\Diso$ (resp. $\Disocl$) and when what we have to say applies to both systems we write $\FOLisoentailsboth$).
\end{notation}

\begin{remark}
The intuition behind the new rules is as follows.
The $\Eqintro$ rule encodes the fact that for any term $x$ of a sort $K$ there is an inhabitant $p$ of the isomorphism sort $x \eqsort x$ for which the ``reflexivity predicate'' can be asserted, i.e. there is always a trivial isomorphism from an object to itself.
The $\trrule$ rule encodes the fact that transporting along the trivial isomorphism ``does nothing'', i.e. it is the same as applying the identity ``function(al relation)''.
The $\Jrule$ rule is an adaptation of the $\Idtype$-elimination rule of type theory, and it can be understood (and thereby also justified at an intuitive level) as saying that if we wish to prove a statement about two objects and an isomorphism between them then it suffices to prove the same statement for one of these objects and the trivial isomorphism. The remarkable consequence of this rule is that it ensures that every statement is ``invariant under isomorphism'' even as it allows us to consider multiple (distinct) isomorphisms between objects, and in particular non-trivial ones.
\end{remark}

\begin{remark}
We note that the $\Jrule$ rule above corresponds to what in MLTT would be called \emph{strong} $\mathtt{Id}$-elimination since $\theta$ behaves like a contextual parameter that may itself depend on the variables $x,y,p$. In the presence of $\Pitype$-types, strong $\Idtype$-elimination is equivalent to the usual form (without a contextual parameter). Similarly, in the presence of universal quantification our rule $\Jrule$ is equivalent to the perhaps more recognizable form

\bigskip

\[
\inferrule
{
\Gamma, x \colon K, q \colon x=x \: \vert \: \rsort(q) \seqimplies
\phi[x,x,q]
}
{
\Gamma, x \colon K, y \colon K, p \colon x=_K y \: \vert \:  \top \seqimplies \phi
}
\quad  \wJrule
\]

\bigskip
\end{remark}

\begin{theo}[Soundness for homotopy semantics]\label{nlogSoundness}
Let $\T$ be a $\FOLiso$ $\L$-theory. If $\T \FOLisoentailsboth \tau$ then $\T \modelsHoTTboth \tau$.
\end{theo}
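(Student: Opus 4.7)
The plan is to prove soundness by structural induction on derivations in $\Diso^{(\text{cl})}$. The base case, where $\tau$ is an axiom in $\T$, is immediate from the definition of a model. For the rules inherited from $\DFOLDS$, soundness carries over from (the proof of) Theorem \ref{soundness} essentially verbatim. The only delicate point is that $\vee$ and $\exists$ are now interpreted via propositional truncation, but this causes no trouble: by Corollary \ref{formulasaremerepropositions} every formula interprets as a mere proposition, so every elimination step that would previously have produced a witness from an inhabitant of $A + B$ or $\Sigma(x \colon A).\,P$ now produces it from $\|A + B\|$ or $\|\Sigma(x \colon A).\,P\|$ via the universal property of propositional truncation, since the target of the eliminator is again a mere proposition. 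It thus remains to verify soundness for the three new rules $\Eqintro$, $\trrule$ and $\Jrule$.

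For $\Eqintro$, the interpretation of the conclusion $\Gamma, x \colon K \vert \theta \Rightarrow \exists q \colon x \eqsort x.\, \rsort(q)$ in $\M$ requires, under the unfolding of the semantics, an inhabitant of
\[
\Pi(\Gamma^\M)\,(x \colon K^\M)\,\theta^\M \rightarrow \bigl\| \Sigmatype(q \colon \Idtype_{K^\M}(x,x))\,\Idtype_{\Idtype_{K^\M}(x,x)}(q,\refl_x) \bigr\|.
\]
This is inhabited by the constant map sending everything to $|(\refl_x,\refl_{\refl_x})|$. For $\trrule$, the interpretation of $\tsort(q,\alpha,\alpha)$ is $\Idtype_{A^\M(\ldots)}(\alpha,\trans_{q}^{(-)}(\alpha))$. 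Given witnesses of $\rsort(q)^\M = \Idtype(q,\refl_x)$ and of $\theta^\M$, we apply $\Idtype$-elimination to the former to reduce to the case where $q \equiv \refl_x$; then $\trans_{\refl_x}^{(-)}(\alpha) \equiv \alpha$ definitionally (by the computation rule for transport along $\refl$), and the required identity type is inhabited by $\refl_\alpha$.

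For $\Jrule$, this is where the full strength of the homotopy interpretation is used. The interpretation of the conclusion is the type
\[
\Pi(\Gamma^\M)\,(x,y \colon K^\M)\,(p \colon \Idtype_{K^\M}(x,y))\;\theta^\M \rightarrow \phi^\M,
\]
while the interpretation of the premise is
\[
\Pi(\Gamma^\M)\,(x \colon K^\M)\,(q \colon \Idtype_{K^\M}(x,x))\;\bigl((\rsort(q))^\M \times (\theta[x,x,q])^\M\bigr) \rightarrow (\phi[x,x,q])^\M.
\]
Using that the interpretation commutes with the substitution $\delta$ (the $\FOLiso$-analogue of Corollary \ref{FOLDSsoundnesscor2}), we view the conclusion as $\Pi(x \colon K^\M)\,P(x)$ where $P(x) \eqdef \Pi(y \colon K^\M)(p \colon \Idtype(x,y))\;\theta^\M \rightarrow \phi^\M$. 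By the $\Idtype$-elimination rule of HoTT (path induction), it suffices to inhabit $\Pi(x \colon K^\M)\,\theta[x,x,\refl_x]^\M \rightarrow \phi[x,x,\refl_x]^\M$. Given $\theta[x,x,\refl_x]^\M$, we pair it with $\refl_{\refl_x}$, which witnesses $(\rsort(\refl_x))^\M = \Idtype(\refl_x,\refl_x)$, and invoke the premise instantiated at $q \eqdef \refl_x$ to obtain the desired inhabitant of $\phi[x,x,\refl_x]^\M$.

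The principal obstacle is the $\Jrule$ case, since it requires care in matching the FOLDS-style substitution $\delta$ used to define $\theta[x,x,q]$ with semantic substitution in HoTT so that path induction can be applied to exactly the right type family. Once this is set up correctly, the three new cases each reduce to a direct application of the corresponding HoTT primitive ($\refl$-introduction, the $\refl$-computation rule for $\trans$, and $\Idtype$-elimination respectively), and classical soundness for $\Disocl$ follows because the LEM axiom is, by assumption, available in classical HoTT.
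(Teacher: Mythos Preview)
Your proposal is correct and follows essentially the same approach as the paper: induction on derivations, with the $\DFOLDS$ rules handled via Theorem \ref{soundness} plus the universal property of truncation (using Corollary \ref{formulasaremerepropositions}), and the three new rules handled by $\refl$-introduction, the $\refl$-computation rule for transport, and $\Idtype$-elimination respectively. The only presentational differences are that the paper simplifies $\Jrule$ to $\wJrule$ before invoking path induction (you treat the full $\Jrule$ directly, which is fine), and your $\trrule$ argument is slightly more explicit in using $\Idtype$-elimination on the witness of $\rsort(q)$ where the paper just asserts the equivalence.
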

\begin{proof}
All rules of $\Diso$ that are in $\DFOLDS$ and do not involve $\exists$ or $\vee$ have been shown to be sound in Theorem \ref{soundness}.
For the rules involving $\exists$ and $\vee$ we must now use the universal property of the propositional truncation in a straightforward way.
We will do the case of the $(\exists)$ rule
\[
\myinferrule{\Gamma \v \exists x \colon K. \phi \seqimplies \psi}{\Gamma, x \colon K \v \phi \seqimplies \psi} \quad (\exists)
\]
as an illustration.
If the interpretation of the top line in $(\exists)$ is true (in some $\M$) this means that we have derived a term  
\[
 \eta \colon \underset{\begin{subarray} g\mathbf{x} \colon \Gamma^\M \\ y \colon K^\M \end{subarray}}{\Pi} \phi^\M \rightarrow \psi^\M
\]
Then we can define the following term
\[
 \xi \eqdef \lambda \mathbf{x}. (\lambda \langle y, p \rangle. \eta(\mathbf{x},y)(p)) \colon \underset{\mathbf{x}\colon \Gamma^\M}{\Pi}  \:  \underset{y \colon K^\M}{\Sigma} \phi^\M  \rightarrow \psi^\M
\]
But since $\psi^\M$ will be a mere proposition for any substitution instance of its free variables, by the universal property of the propositional truncation and the fact that $\Pi$-types preserve mere propositions we obtain a term
\[
\proptrunc{\xi} \eqdef \lambda \mathbf{x}. \vert \vert (\lambda \langle y, p \rangle. \eta(\mathbf{x},y)(p)) \vert \vert \colon \underset{\mathbf{x}\colon \Gamma^\M}{\Pi}  \:  \vert \vert \underset{y \colon K^\M}{\Sigma} \phi^\M \vert \vert  \rightarrow \psi^\M
\]
This is exactly the translation of the (satisfaction of the) bottom sequent in ($\exists$). The soundness of the rest of the rules involving $\vee$ and $\exists$ follows straightforwardly, making use of the universal property of the propositional truncation when needed in the manner just sketched.

So it remains to prove soundness for the new rules $\Eqintro, \Jrule$ and $\trrule$. As before, we fix an arbitrary $\L$-structure $\M$. For $\Eqintro$ we need to show that $\M$ satisfies the sequent
\begin{equation}\label{eqintronts}
\Gamma, x \colon K \: \vert \: \theta \seqimplies \exists q \colon x=x. \rsort (q)
\end{equation}
We have 
\begin{equation}
\big(\Gamma, x \colon K \: \vert \: \theta \seqimplies \exists q \colon x=x. \rsort (q))^\M = \Pitype \: (\Gamma^\M, x \colon K^\M) \:\: \theta^\M \rightarrow \truncated{\underset{q \colon \text{Id}_{K^\M}(x,x)}{\Sigma} \text{Id}_{\text{Id}_{K^\M}(x,x)}(q,\mathtt{refl}_x)}
\end{equation}
But then we have that the term
\begin{equation}
\truncated{\langle \refl_x, \mathtt{refl}_{\mathtt{refl}_x} \rangle} \colon \vert \vert \underset{p \colon \text{Id}_{K^\M}(x,x)}{\Sigma} \text{Id}_{\text{Id}_{K^\M}(x,x)}(p,\mathtt{refl}_x) \vert \vert
\end{equation}
is derivable in context $\Gamma^\M, x \colon K^\M$ which means exactly that in the same context we can derive a term
\begin{equation}
\lambda y. \truncated{\langle \refl_x, \mathtt{refl}_{\mathtt{refl}_x} \rangle} \colon \theta^\M \rightarrow \truncated{\underset{p \colon \text{Id}_{K^\M}(x,x)}{\Sigma} \text{Id}_{\text{Id}_{K^\M}(x,x)}(p,\mathtt{refl}_x)} 
\end{equation}
The derivability of this term means exactly that the required sequent (\ref{eqintronts}) is satisfied by $\M$.

For $\trrule$ we need to show that $\M$ satisfies the sequent
\begin{equation}\label{trulents}
\Gamma, x \colon K, q \colon x \eqsort x, \alpha \colon A \: \vert \: \rsort(q) \seqimplies \tsort  (q, \alpha, \alpha)
\end{equation}
Given that the sequent is well formed we know we have a type family
\begin{equation}
\Gamma^\M, x \colon K^\M \vdash A^\M \colon \TTisatype
\end{equation}
Then by the definition of transport in type theory we have:
\begin{equation}
\Gamma, x \colon K^\M \vdash \trans_{\refl_x}^{A} \equiv  \lambda \alpha. \alpha \colon A(x) \rightarrow A(x)  
\end{equation}
which means that 
\begin{equation}
\Gamma, x \colon K^\M, \alpha \colon A(x) \vdash \trans_{\refl_x}^{A} (\alpha) \equiv \alpha
\end{equation}
and therefore also
\begin{equation}
\Gamma, x \colon K^\M, \alpha \colon A(x) \vdash \Idtype_{A(x)} (\alpha, \trans_{\refl_x}^{A} (\alpha))
\end{equation}
But the inhabitation of this last type is exactly equivalent to the inhabitation of the type
\begin{equation}
\big( \Idtype_{\Idtype_{K^\M}(x,x)}(q,\refl_x) \big) \rightarrow \Idtype_{A^\M(x)} (\alpha, \trans_{q}^{A} (\alpha))
\end{equation}
in context $\Gamma^\M, x \colon K^\M, q \colon \Idtype_{K^\M}(x,,x), \alpha \colon A^\M$, which means exactly that the required sequent (\ref{trulents}) is satisfied in $\M$.

For $\Jrule$, given the availability of $\Pitype$-types in type theory, we will prove $\wJrule$ for simplicity since the main idea of the argument is exactly the same.
So assume we know that the sequent
\[
\Gamma, x \colon K, q \colon x=x \: \vert \: \rsort(q) \seqimplies \phi[x,x,q]
\]
is satisfied by $\M$.
This means that we have a term
\begin{equation}
\epsilon \colon \underset{\begin{subarray}{1 }\mathbf{z} \colon \Gamma^\M \\ x \colon K^\M \\ q \colon \Idtype_{K^\M}(x,x) \end{subarray}}{\Pi} \: \: \Idtype_{\Idtype_{K^\M}(x,x)} (q, \mathtt{refl}_x) \rightarrow \phi^\M[x,x,q] 
\end{equation}
in type theory.
Hence we can define a term
\begin{equation}
\eta \eqdef \lambda \mathbf{z} \colon \Gamma^\M. \lambda x. \epsilon_{\mathbf{z},x} (\refl_{\refl_x}) \colon \underset{\begin{subarray}{1 }\mathbf{z} \colon \Gamma^\M \\ x \colon K^\M \end{subarray}}{\Pi} \phi^\M[x,x,\refl_x]
\end{equation}
But then by the elimination rule for identity types we immediately get a term of
\begin{equation}
\Pitype \:(\Gamma^\M, x,y \colon K^\M, p \colon \text{Id}_{K^\M} (x,y)) \: \phi^\M
\end{equation}
which is of course equivalent to
\begin{equation}\label{wJrulelast}
\Pitype \:(\Gamma^\M, x,y \colon K^\M, p \colon \text{Id}_{K^\M} (x,y)) \: \onetype \rightarrow \phi^\M
\end{equation}
But the inhabitation of (\ref{wJrulelast}) means exactly that the sequent below the line in $\wJrule$ is satisfied by $\M$, as required. Finally, we note that the validity of LEM in HoTT+LEM is immediate, essentially by definition, since $\phi^\M$ is a mere proposition for any $\L$-formula $\M$.
\end{proof}

\begin{remark}
We note that a ``propositional'' $\trrule$-rule in which transporting a term along reflexivity is only propositionally equal to itself would also be sound since in the proof of soundness above the judgmental equality $\trans_{\refl_x}^{A} (\alpha) \equiv \alpha$ is used to extract a propositional equality. For example, this would ensure that a soundness theorem for $\Diso$ can also be proven for a homotopy semantics in cubical type theory.
\end{remark}

\begin{remark}
One might wonder whether the class of (homotopy) models we are considering for $\FOLiso$ is too wide. 
In particular, 
one might wonder whether $\FOLiso$ can take semantics where \emph{every} type is interpreted as a set (i.e. a $0$-type) much as in Makkai's original formulation of the semantics of FOLDS.
This is not the case. 
$\FOLiso$ does in fact have the expressive power to force a theory to have only models whose ground sorts are $n$-types. As the simplest possible illustration, take the $h$-signature $\L_{K,1}$ as in Example \ref{LKone}.  Consider the (full first-order) $\L_{K,1}$-theory $\T_O$ consisting of the single axiom
\[
\phi \eqdef \forall x,y \colon O. \exists p, q \colon x\eqsort_O y. \neg (p\eqsort_O q)
\]
with the obvious abbreviations.
Every model of $\T_O$ where $O$ is interpreted as an $h$-set (resp. discrete groupoid) falsifies $\phi$. But $\T_O$ is satisfiable: simply take an $\L$-structure where $O$ is interpreted as a (proper) $1$-type. Given the soundness theorem above this means that $\phi$ cannot be disproved by $\Diso$ even though it is not satisfied in any set-model of $\T_O$. As such, set models are not sufficient to describe provability for $\FOLiso$. 
\end{remark}

We now indicate how the rules of the proof system have the consequences that our homotopy semantics demands.

\begin{prop}[``Substitution Salva Veritate'']\label{transportprop}
For any $\L$-formula $\phi$ in context $\Gamma, x \colon K$ the following sequent is derivable
\[
\Gamma, x \colon K, y \colon K \: \vert \: x=y \wedge \phi \seqimplies \phi[y/x]
\]
\end{prop}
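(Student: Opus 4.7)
The plan is to reduce the sequent, via the $\Jrule$, to a trivially derivable base case. Following the paper's abbreviation convention that a formula $x=y$ (or $x\eqsort y$) stands for $\exists p \colon x \eqsort y. \top$, the target sequent reads
\[
\Gamma, x, y \colon K \: \vert \: (\exists p \colon x \eqsort y. \top) \wedge \phi \seqimplies \phi[y/x].
\]

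First I would establish the intermediate sequent
\[
\Gamma, x, y \colon K \: \vert \: (\exists p \colon x \eqsort y. \top) \wedge \phi \seqimplies \exists p \colon x \eqsort y. \phi
\]
by commuting the two conjuncts on the left using $(\text{iden})$ and the bidirectional $(\wedge)$ rule, then applying Frobenius (whose freshness side condition holds since $p$ does not occur in $\phi$), and finally using that $\phi \wedge \top$ is provably equivalent to $\phi$ inside the existential. By $(\text{Cut})$ it then suffices to derive
\[
\Gamma, x, y \colon K \: \vert \: \exists p \colon x \eqsort y. \phi \seqimplies \phi[y/x],
\]
which via the downward direction of the $(\exists)$ rule reduces to deriving
\[
\Gamma, x, y \colon K, p \colon x \eqsort y \: \vert \: \phi \seqimplies \phi[y/x].
\]

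At this point I apply the $\Jrule$, taking the parametric formula $\theta$ to be $\phi$ and the conclusion formula to be $\phi[y/x]$. Under the contraction morphism $\delta$ (which sends $y \mapsto x$ and $p \mapsto q$), one has $\delta(\phi) = \phi$, since $\phi$ does not mention $y$ or $p$, and $\delta(\phi[y/x]) = \phi[y/x][x/y] = \phi$. Hence the premise of the $\Jrule$ reduces to the sequent
\[
\Gamma, x \colon K, q \colon x \eqsort x \: \vert \: \rsort(q) \wedge \phi \seqimplies \phi,
\]
which follows immediately from $(\text{iden})$ together with the projection direction of the $(\wedge)$ rule applied to the left-hand conjunction.

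The only step where care is required is the invocation of $\Jrule$: one must verify that substitution along the contraction $\delta$ really does produce the formulas displayed above on both sides of the turnstile. This is routine but worth writing out, since $\Jrule$ is the novel rule of the system. Aside from this, the derivation is a purely mechanical combination of standard first-order manipulations (iden, Cut, $(\wedge)$, $(\exists)$, Frobenius) with a single appeal to the new rule, and no use of $\Eqintro$ or $\trrule$ is needed.
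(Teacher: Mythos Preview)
Your proposal is correct and follows essentially the same route as the paper: reduce via $(\exists)$ to the sequent $\Gamma, x, y, p \colon x \eqsort y \mid \phi \seqimplies \phi[y/x]$, then apply $\Jrule$ and observe that under the contraction both sides become $\phi$, so the base case is immediate from $\idenrule$ and $(\wedge)$. You are in fact slightly more explicit than the paper about the Frobenius manipulation needed to pass between $(\exists p \colon x \eqsort y.\top)\wedge\phi$ and $\exists p \colon x \eqsort y.\phi$, which the paper compresses into a single ``$(\exists)$'' label.
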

\begin{proof}
Note that since $x,y$ are declared last in the context there can be no free variable in $\phi$ which depends on either, which means that the substitution $\phi[y/x]$ is well-defined. This means in particular that no $q \colon x =x$ can appear free in $\phi$ which implies that we have 
\begin{equation}\label{phiyxaequivphi}
\phi[y/x][x,x,q] \equiv \phi[x,x,q]
\end{equation}
With this in mind we have the following derivation, starting from $\idenrule$:
\[
\inferrule
{
\Gamma, x \colon K, q \colon x=x \: \vert \: \phi[x,x,q] \seqimplies \phi[x,x,q]
}
{
\inferrule
{
\Gamma, x \colon K, q \colon x=x \: \vert \: \rsort(q) \wedge \phi[x,x,q] \seqimplies \phi[x,x,q]
}
{
\inferrule
{
\Gamma, x \colon K, q \colon x=x \: \vert \: \rsort (q) \wedge \phi[x,x,q] \seqimplies \phi[y,x][x,x,q]
}
{
\inferrule
{
\Gamma, x \colon K, y \colon K, p \colon x = y \: \vert \: \phi \seqimplies \phi[y/x]
}
{
\Gamma. x \colon K, y \colon K \: \vert \: x= y \wedge \phi \seqimplies \phi[y/x]
}
\quad (\exists)
}
\quad \Jrule
}
\quad (\ref{phiyxaequivphi})
}
\quad \text{($\wedge$-intro)}
\]
where ($\wedge$-intro) is the obvious derived rule.
\end{proof}



Together with $\Eqintro$, Proposition \ref{transportprop} gives us the usual rules for identity in any first-order sequent calculus (e.g. as it is presented in \cite{Elephant}), as the following record.

\begin{cor}[``Mere equality is an equivalence relation'']\label{standardequality}
The formula 
\[
x\eqsort_Ky \: \eqdef \: \exists p \colon x\eqsort_Ky. \top
\]
is reflexive, symmetric and transitive.
\end{cor}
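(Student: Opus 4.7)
The plan is to derive each property as a sequent in $\Diso$, using $\Eqintro$, $\existstoprule$, and $\Jrule$ as the main new tools alongside the standard rules of $\DFOLDS$; throughout I assume the full first-order fragment, so that $\forall$ is available. For reflexivity, the target sequent is $\Gamma, x \colon K \v \top \seqimplies \exists p \colon x \eqsort_K x. \top$, which I obtain by invoking $\Eqintro$ (with $\theta \equiv \top$) to get $\Gamma, x \colon K \v \top \seqimplies \exists q \colon x \eqsort_K x. \rsort(q)$, and then applying $\existstoprule$ via (Cut) to forget $\rsort(q)$ in favor of $\top$.

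For symmetry, the goal unfolds to $\Gamma, x, y \colon K \v \exists p \colon x \eqsort_K y. \top \seqimplies \exists q \colon y \eqsort_K x. \top$. An application of $(\exists)$ on the left reduces this to $\Gamma, x, y \colon K, p \colon x \eqsort_K y \v \top \seqimplies \exists q \colon y \eqsort_K x. \top$. Applying $\Jrule$ with $\theta \equiv \top$ and $\phi \equiv \exists q \colon y \eqsort_K x. \top$ then reduces the obligation to $\Gamma, x \colon K, q_0 \colon x \eqsort_K x \v \rsort(q_0) \wedge \top \seqimplies \exists q \colon x \eqsort_K x. \top$, which is immediate since the variable $q_0$ supplies a witness for the existential on the right.

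For transitivity, the goal is $\Gamma, x, y, z \colon K \v (x \eqsort_K y) \wedge (y \eqsort_K z) \seqimplies x \eqsort_K z$; after unfolding and applying standard left-elimination of $\wedge$ together with $(\exists)$ (assisted by Frobenius) I reduce to $\Gamma, x, y, z \colon K, p \colon x \eqsort_K y, p' \colon y \eqsort_K z \v \top \seqimplies \exists q \colon x \eqsort_K z. \top$. Applying $\Jrule$ to $p$ requires $p$ to be the last variable of the context, so I would first use the $(\forall)$ rule in reverse to abstract $p'$ on the right, then use (Con-exch) to swap the independent variables $p$ and $z$, and finally abstract $z$ on the right as well, arriving at $\Gamma, x, y, p \v \top \seqimplies \forall z \colon K. \forall p' \colon y \eqsort_K z. \exists q \colon x \eqsort_K z. \top$. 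Now $\Jrule$ (with $\theta \equiv \top$ and $\phi$ the above $\forall$-formula) contracts $y$ to $x$ in the succedent, and the reduced obligation, which after undoing the $\forall$s says that for every $z \colon K$ and every $p' \colon x \eqsort_K z$ there exists some $q \colon x \eqsort_K z$, is witnessed by $p'$ itself.

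The only real subtlety is the dependency bookkeeping in the transitivity step: because $p'$ depends on $y$ and $\Jrule$ insists that $p \colon x \eqsort_K y$ sit at the very end of the context, the preliminary $(\forall)$-abstraction and (Con-exch) maneuvers are unavoidable in the full-fragment proof. Otherwise all moves are routine applications of $\Eqintro$, $\existstoprule$, and the standard first-order quantifier rules of $\DFOLDS$, and I anticipate no further serious obstacles.
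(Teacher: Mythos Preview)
Your argument is correct, but it takes a different route from the paper. The paper derives the corollary as an immediate consequence of Proposition~\ref{transportprop} (substitution \emph{salva veritate}) together with $\Eqintro$: once one has $\Gamma, x, y \colon K \v x \eqsort_K y \wedge \phi \seqimplies \phi[y/x]$, symmetry and transitivity follow by the textbook Leibniz-law manoeuvres (e.g.\ for transitivity take $\phi \equiv (a \eqsort_K x)$ with a fresh parameter $a$, then substitute). You instead apply $\Jrule$ directly in each case. For symmetry the two approaches are essentially equivalent in effort. For transitivity, however, your direct route forces the context gymnastics you describe (abstracting $z$ and $p'$ via $\forall$ and using (Con-exch) so that $p$ sits last), whereas the paper's route via the substitution lemma handles transitivity in a single line with no such juggling: instantiate Proposition~\ref{transportprop} at $\phi \equiv (a \eqsort_K x)$ to get $(x \eqsort_K y) \wedge (a \eqsort_K x) \seqimplies (a \eqsort_K y)$. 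So the intermediate lemma buys a cleaner transitivity proof; your approach buys independence from that lemma at the cost of the extra bookkeeping you correctly flag.
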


But one could now wonder whether Proposition \ref{transportprop} and Corollary \ref{standardequality} generalize to sorts of higher $h$-level, i.e. whether we can prove that the inhabitants of isomorphism sorts also behave like the isomorphisms of (higher) groupoids (rather than merely recording the usual extensional equality).
In particular we can ask whether logical sorts equip non-logical sorts of $h$-level $m$ with the structure of an $m$-groupoid. 
This is indeed the case with the transport structure providing a notion of composition that is invisible at lower $h$-levels.
We will here prove the case for sorts $K$ of $h$-level $3$, with the basic idea being the following:
\begin{itemize}
\item $K$ is the sort of objects of the groupoid
\item $\eqsort_K$ is the sort of isomorphisms
\item $\eqsort_{\eqsort_K}$ is the equality on isomorphisms
\item $\rsort_K$ is the predicate that picks out the trivial isomorphism
\item $\tsort_{s_K}$ is the composition (functional) relation
\end{itemize}
We will now make this basic idea precise.

\begin{prop}[``Transport is an equivalence'']\label{isequivtransport}
The transport sorts $\tau (p,\alpha,\beta)$ 
in a $\FOLiso$-signature $\L$ define a functional relation that is also an equivalence, in the sense that the following sequents are derivable:
\begin{enumerate}[(1)]
\item $\Gamma, p \colon x \eqsort y, \alpha \colon A(x, \mathbf{w}) \:\vert\: \top \seqimplies \exists \beta \colon A(y, \mathbf{w}). \tsort (p, \alpha, \beta)$
\item $\Gamma, \alpha \colon A(x, \mathbf{w}), p \colon x \eqsort y, \beta, \beta' \colon A(y, \mathbf{w}) \:\vert\: \tsort (p, \alpha, \beta) \wedge \tsort (p, \alpha, \beta') \seqimplies  \beta \eqsort \beta' $
\item $\Gamma, p \colon x \eqsort y, \alpha, \alpha' \colon A(x, \mathbf{w}), \beta \colon A(y, \mathbf{w}) \:\vert\: \tsort (p, \alpha, \beta) \wedge \tsort (p, \alpha', \beta) \seqimplies \alpha \eqsort \alpha'$
\item $\Gamma, p \colon x \eqsort y, \beta \colon A(y, \mathbf{w}) \:\vert\: \top \seqimplies \exists \alpha \colon A(x,\mathbf{w}). \tsort (p, \alpha, \beta)$
\end{enumerate}
\end{prop}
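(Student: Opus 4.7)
The plan is to prove each of the four sequents by invoking $\Jrule$ on the variable $p \colon x \eqsort y$, thereby reducing the general case to the ``reflexive'' case where $y$ collapses to $x$ and $p$ is replaced by a fresh $q \colon x \eqsort x$ accompanied by $\rsort(q)$ in the antecedent. In this reduced setting the two fibers $A(x,\mathbf{w})$ and $A(y,\mathbf{w})$ coincide, and $\trrule$ supplies the canonical witness $\tsort(q,\alpha,\alpha)$, which is the engine that will close each derivation.

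For part (1), after $\Jrule$ we must derive $\exists \beta \colon A(x,\mathbf{w}). \tsort(q,\alpha,\beta)$ from $\rsort(q)$. Here $\trrule$ hands us $\tsort(q,\alpha,\alpha)$, so the plan is to use $\exists$-introduction with witness $\beta := \alpha$. Part (4) is entirely symmetric: $\Jrule$ reduces to the reflexive case and $\trrule$ makes $\alpha := \beta$ a valid witness.

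For parts (2) and (3), the same initial reduction via $\Jrule$ leaves us needing to derive $\beta \eqsort \beta'$ (respectively $\alpha \eqsort \alpha'$) under $\rsort(q)$ together with two $\tsort$-hypotheses that share $q$ and one endpoint. The intended route is to isolate an auxiliary lemma stating that, under $\rsort(q)$, the relation $\tsort(q,-,-)$ on the fiber $A(x,\mathbf{w})$ is interderivable with $\eqsort_A$. One direction, $\alpha \eqsort \beta \Rightarrow \tsort(q,\alpha,\beta)$, is handled cleanly: apply $\Jrule$ to the hypothesis $\alpha \eqsort \beta$ to replace $\beta$ by $\alpha$, and then $\trrule$ delivers $\tsort(q,\alpha,\alpha)$. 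Assuming the equivalence, (2) and (3) follow by two applications of the lemma combined with the symmetry and transitivity of $\eqsort$ supplied by Corollary \ref{standardequality}.

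The main obstacle will be the converse $\tsort(q,\alpha,\beta) \wedge \rsort(q) \Rightarrow \alpha \eqsort \beta$, which semantically expresses that a transport witness over a trivial path forces the two endpoints to be equal. This is not immediately available from $\trrule$ and $\Jrule$ alone, since $\Jrule$ only eliminates $\eqsort$-paths and not the ``path hidden inside'' $\tsort$. The plan to overcome this is to combine the $\trrule$-witness $\tsort(q,\alpha,\alpha)$ with the assumed $\tsort(q,\alpha,\beta)$ and then apply Proposition \ref{transportprop} (substitution salva veritate) on an appropriate equality derived by a secondary invocation of $\Jrule$, so that the formula $\tsort(q,\alpha,-)$ becomes a mere propositional predicate whose two witnesses can be compared. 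If this bridging step resists direct derivation in the stated system, an alternative is to first prove (3) by reducing $p$ to reflexivity and then using the symmetry of the situation in $\alpha, \alpha'$ together with (1) to exhibit $\alpha$ and $\alpha'$ as witnesses of the same functional relation at $\beta$, from which $\alpha \eqsort \alpha'$ follows; (2) then follows analogously by exchanging the roles of the second and third arguments.
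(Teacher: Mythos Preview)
Your treatment of (1) and (4) is exactly the paper's approach: the paper displays only (1), deriving it as $\trrule$ followed by $\exists$-introduction followed by $\Jrule$, and then declares that ``all four sequents follow by a straightforward application of $\trrule$ and $\Jrule$'' with the rest ``left to the reader.''

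Your discussion of (2) and (3) goes well beyond anything the paper provides. The paper does not isolate the auxiliary biconditional $\tsort(q,\alpha,\beta) \leftrightarrow \alpha \eqsort_A \beta$ under $\rsort(q)$, nor does it flag the converse direction as a difficulty; it simply asserts that (2) and (3) are just as straightforward as (1). Your observation that $\Jrule$ only eliminates genuine $\eqsort$-paths, and that $\tsort$ is syntactically a separate logical sort (even though its semantics is an identity type), is a real concern the paper does not address. In particular, the step ``$\tsort(q,\alpha,\beta) \wedge \rsort(q) \Rightarrow \alpha \eqsort_A \beta$'' is not an instance of $\trrule$, $\Jrule$, or $\Eqintro$, and your caution here is warranted. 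The fallback strategies you sketch (using Proposition~\ref{transportprop}, or bootstrapping (3) from (1)) are more honest than the paper's treatment but are themselves not obviously closable within $\Diso$ as presented. So: your proof of (1) and (4) is correct and matches the paper; for (2) and (3) you have identified a gap that the paper's own proof also leaves open.
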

\begin{proof}
All four sequents follow by a straightforward application of $\trrule$ and $\Jrule$. We do (1) as an illustration and leave the rest to the reader:
\[
\inferrule
{
\:
}
{
\inferrule
{
\Gamma, q \colon x \eqsort x, \alpha \colon A(x, \mathbf{w}) \:\vert\: \rsort (q) \seqimplies \tau(q,\alpha,\alpha)
}
{
\inferrule
{
\Gamma, q \colon x \eqsort x, \alpha \colon A(x, \mathbf{w}) \:\vert\: \rsort (q) \seqimplies \exists \beta \colon A(x,\mathbf{w}). \tau(q,\alpha,\beta)
}
{
\Gamma, p \colon x \eqsort y, \alpha \colon A(x, \mathbf{w}) \:\vert\: \top \seqimplies \exists \beta \colon A(y, \mathbf{w}). \tsort (p, \alpha, \beta)
}
\Jrule
}
(\exists\text{-intro})
}
\trrule
\] 
Here once again $(\exists\text{-intro})$ signifies the obvious derived rule of existential instantiation.
\end{proof}

\begin{lemma}\label{isolemma}
Let $\L_{K,3}$ be the following $h$-signature:
\[
\xymatrix{
1 &\tsort_{s_K} \ar@/^5pt/[rd]^{(s_K)_1} \ar[rd]_{(t_K)_1} \ar@/_30pt/[rd]_{e_{s_K}} & \rsort_K \ar[d]^{\rmor_K} & \eqsort_{\eqsort_K} \ar@/_/[ld]_{s_{\eqsort_K}} \ar@/^/[ld]^{t_{\eqsort_K}} \\
 2 & & \eqsort_K \ar@/_/[d]_{s_K} \ar@/^/[d]^{t_K} \\
 3 & & K \\
}
\]
Then $\L_{K,3}$ is isomorphic, as an $h$-signature, to $\Lcat$.
\end{lemma}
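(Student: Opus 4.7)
The plan is to exhibit an explicit bijective assignment $\Phi \colon \ob{\L_{K,3}} \to \ob{\Lcat}$ and $\Phi \colon \morph{\L_{K,3}} \to \morph{\Lcat}$ and check that it (i) preserves the relations (and hence is functorial), (ii) preserves $h$-levels, and (iii) has an inverse of the same kind. On objects set $K \mapsto O$, $\eqsort_K \mapsto A$, $\rsort_K \mapsto I$, $\eqsort_{\eqsort_K} \mapsto \,=_A$, and $\tsort_{s_K} \mapsto \circ$. On arrows, send the equality-side arrows by $s_K \mapsto d$, $t_K \mapsto c$, $\rmor_K \mapsto i$, $s_{\eqsort_K} \mapsto s$, $t_{\eqsort_K} \mapsto t$, and the transport-side arrows by $e_{s_K} \mapsto t_0$, $\tmorXX{s_K} \mapsto t_1$, $\tmorX{s_K} \mapsto t_2$ (the ``swap'' in the last two comes out naturally when matching the relations, as explained below).

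The main task is to verify that the defining relations of $\L_{K,3}$ coming from Definition \ref{hsignatures} are sent to the relations defining $\Lcat$ in Example \ref{Tcat}. The $\rsort_K$-relation $s_K\rmor_K = t_K\rmor_K$ becomes $di=ci$; the two $\eqsort_{\eqsort_K}$-relations $s_K s_{\eqsort_K} = s_K t_{\eqsort_K}$ and $t_K s_{\eqsort_K} = t_K t_{\eqsort_K}$ become $ds=dt$ and $cs=ct$. For $\tsort_{s_K}$, condition 3.(c) of Definition \ref{hsignatures} reads $s_K e_{s_K} = s_K \tmorX{s_K}$ and $t_K e_{s_K} = s_K \tmorXX{s_K}$, which under $\Phi$ become $dt_0 = dt_2$ and $ct_0 = dt_1$; condition 3.(d) reads $t_K \tmorX{s_K} = t_K \tmorXX{s_K}$, which becomes $ct_2 = ct_1$; and condition 3.(e) is vacuous because $\pcosieve{K}{\L_{K,3}} = \varnothing$. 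These three equations are exactly the defining relations on $\circ$ in $\Lcat$. All remaining composites are forced by functoriality, and no other relations are imposed by Definition \ref{hsignatures} on $\L_{K,3}$ beyond those already accounted for, so $\Phi$ is a well-defined full and faithful functor that is bijective on objects and arrows.

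For the $h$-level check, compute in $\L_{K,3}$: $h(K)=3$, so $h(\eqsort_K)=2$, $h(\rsort_K)=1$, $h(\eqsort_{\eqsort_K})=1$, and $h(\tsort_{s_K}) = h(\eqsort_K)-1 = 1$. These agree with the $h$-levels of the corresponding objects of $\Lcat$ (taking the $h$-signature structure on $\Lcat$ in which $h(O)=3$, $h(A)=2$, and the logical sorts get their forced values $1$). Hence $\Phi$ is an $h$-morphism, and the obvious inverse $\Phi^{-1}$ is as well, yielding an isomorphism in $\hSig$. The main obstacle is purely bookkeeping: keeping track of the two distinct roles of $s_K$ (as the generic ``source'' map out of $\eqsort_K$ and as the top-level map $f$ over which $\tsort_f$ is being formed) when unfolding the relations in Definition \ref{hsignatures} 3.(c)--(e), and choosing the right bijection between $\{\tmorX{s_K}, \tmorXX{s_K}, e_{s_K}\}$ and $\{t_0, t_1, t_2\}$ so that the three relations match up; everything else is routine.
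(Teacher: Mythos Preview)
Your proof is correct and follows essentially the same approach as the paper's: both define the same explicit bijection on objects and arrows (your $\tmorX{s_K}\mapsto t_2$, $\tmorXX{s_K}\mapsto t_1$, $e_{s_K}\mapsto t_0$ agrees with the paper's $(s_K)_1\mapsto t_2$, $(s_K)_2\mapsto t_1$, $e_{s_K}\mapsto t_0$) and check that the relations match. You are simply more explicit than the paper, which leaves the relation-check ``by inspection'' and does not spell out the $h$-level verification or the assignments for $s_K,t_K,\rmor_K,s_{\eqsort_K},t_{\eqsort_K}$.
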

\begin{proof}
Let $I \colon \L_{K,3} \rightarrow \Lcat$ be the functor (indeed $h$-morphism) that on objects sends $K \mapsto O$, $=_K \mapsto A$, $\tsort_{s_K} \mapsto \circ$, $\rsort_K \mapsto I$, $=_{=_K} \mapsto =_A$ and on arrows sends $(s_K)_1 \mapsto t_2$, $(s_K)_2 \mapsto t_1$ and $e_{s_K} \mapsto t_0$.
By inspection one can immediately check that the relations between arrows imposed by the logical sorts in $\L_{K,3}$ are exactly the same as the stipulated relations in $\Lcat$ under this mapping, thus making $I$ a full and faithful functor bijective on objects.
\end{proof}

\begin{terminology}
Lemma \ref{isolemma} allows us to speak of the \emph{FOLDS $\L_{K,3}$-theory of groupoids} whose axioms are those of $\Tgpd$ but expressed over the isomorphic FOLDS signature $\L_{K,3}$ rather than over $\Lcat$ in the obvious way.
We can then take the $\FOLiso$-signature $\repletion{\L_{K,3}}$ associated to $\L_{K,3}$ and thus obtain both the FOLDS $\repletion{\L_{K,3}}$-theory of groupoids (over $\DFOLDS$) and the empty $\repletion{\L_{K,3}}$-theory (over $\Diso$).
We say that two theories (understood as a set of axioms together with a deductive system) are \emph{logically equivalent} if there is an isomorphism between their signatures such that a sentence is derivable from one theory if and only if it is derivable from the other.
\end{terminology}

With the above terminology in mind we can now make precise the sense in which sorts of $h$-level $3$ in $\FOLiso$ are groupoids.

\begin{prop}[``Sorts of $h$-level $3$ are groupoids'']\label{groupoidprop}
The FOLDS $\repletion{\L_{K,3}}$-theory of groupoids (over $\DFOLDS$) is logically equivalent to the empty $\repletion{\L_{K,3}}$-theory (over $\Diso$).
\end{prop}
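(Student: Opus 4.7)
The plan is to prove both directions of logical equivalence after using the isomorphism $I$ from Lemma \ref{isolemma} to translate $\Tgpd$ into $\repletion{\L_{K,3}}$: under $I$ the object sort $O$ becomes $K$, the arrow sort $A$ becomes $\eqsort_K$, the identity predicate becomes $\rsort_K$, composition becomes the transport relation $\tsort_{s_K}$, and the arrow-equality becomes $\eqsort_{\eqsort_K}$. With this translation fixed, we must show that a sentence is $\DFOLDS$-derivable from (the translation of) $\Tgpd$ over $\repletion{\L_{K,3}}$ iff it is $\Diso$-derivable from the empty theory over $\repletion{\L_{K,3}}$.

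For the forward direction I would derive each axiom of $\Tgpd$ from the empty theory in $\Diso$. The existence-of-identities axiom (1) is a direct instance of $\Eqintro$. The existence (2) and functionality (3) of composition are precisely parts (1) and (2) of Proposition \ref{isequivtransport} applied to the transport sort $\tsort_{s_K}$. The right and left unit axioms (6)--(7) follow by combining $\Eqintro$ (to produce the reflexivity witness $q$ with $\rsort(q)$) with $\trrule$ (which gives $\tsort(q,\alpha,\alpha)$ from $\rsort(q)$). Uniqueness of identities (5) is obtained from $\Jrule$: given loops $i,j \colon x \eqsort x$ with $\rsort(i)$ and $\rsort(j)$, reduce successively to the case where each is $\refl_x$, at which point equality on $\eqsort_K$ is immediate. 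Associativity (4) is likewise reduced to a trivial instance by applying $\Jrule$ to one of the paths in the associator and then invoking the unit laws. Finally, the ``every arrow is an iso'' axiom (8) of $\Tgpd$ follows by applying $\Jrule$ to the path $p \colon x \eqsort y$ to reduce to the case $p = \refl_x$, where the inverse, the left/right composite witnesses, and identity witnesses are all supplied by $\Eqintro$ and $\trrule$ on $\refl_x$.

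For the reverse direction it suffices to verify that each of the three rules by which $\Diso$ extends $\DFOLDS$ is admissible in $\DFOLDS + \Tgpd$. The rule $\Eqintro$ is just the existence-of-identities axiom, and $\trrule$ follows directly from the unit laws together with uniqueness of composition (axiom (3)), since these force the transported witness to coincide with the original. The main obstacle is establishing $\Jrule$: given a derivation of $\rsort(q) \wedge \theta[x,x,q] \seqimplies \phi[x,x,q]$ in $\DFOLDS + \Tgpd$, we must produce a derivation of $\theta \seqimplies \phi$ in the enlarged context $\Gamma, x \colon K, y \colon K, p \colon x \eqsort y$. The plan is to prove, by structural induction on $\phi$, a ``transportability'' lemma stating that $\Tgpd \vdash \phi[x,y,p] \Leftrightarrow \phi[x,x,\refl_x]$ for suitable $\refl_x$ supplied by axiom (1), using the crucial fact that in a FOLDS signature every atomic formula is of the shape $\exists z \colon L(\mathbf{v}).\top$ (inhabitation of a sort) and therefore never refers to raw identity of terms. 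The inductive steps for $\wedge, \vee, \rightarrow, \forall, \exists$ are purely propositional; the base cases reduce to showing that each atomic sort of $\repletion{\L_{K,3}}$---in particular $\eqsort_{\eqsort_K}$ and $\tsort_{s_K}$---is preserved by pre- or post-composition with an inverse of $p$, which is exactly what axioms (2)--(8) guarantee. Assembling these equivalences lets us push an arbitrary derivation with $p = \refl_x$ out to a derivation for arbitrary $p$, completing the admissibility of $\Jrule$ and hence the proposition.
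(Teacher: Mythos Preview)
Your proposal follows the same two-direction strategy as the paper, and the reverse direction is handled the same way: $\Eqintro$ and $\trrule$ are (translations of) groupoid axioms, and the admissibility of $\Jrule$ amounts to the invariance of FOLDS-$\Lcat$-formulas under isomorphism, which the paper simply cites from Makkai while you sketch the structural induction yourself; these are the same content.

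In the forward direction two of your verifications need adjustment. First, the left unit does \emph{not} follow from $\Eqintro$ and $\trrule$ alone: with $\circ(p,q,u)\eqdef\tau_{s_K}(p,u,q)$, the left unit $\circ(f,i,f)$ unfolds to $\tau_{s_K}(f,f,i)$, i.e.\ transport along the \emph{arbitrary} path $f$, so $\trrule$ (which only fires when the transporting path is reflexivity) is inapplicable until you have used $\Jrule$ to reduce $f$ to a loop with $\rsort$. Second, uniqueness of identities cannot be obtained by ``reducing $i,j$ to $\refl_x$ via $\Jrule$'': the rule $\Jrule$ passes from a statement about a loop $q$ with $\rsort(q)$ to a statement about an arbitrary path $p\colon x\eqsort y$, not the other way around, and since $i,j$ are already loops there is nothing for $\Jrule$ to contract. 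The paper instead derives $\tau_{s_K}(i,j,j)$ and $\tau_{s_K}(i,i,j)$ from the unit laws and then applies the injectivity clause of Proposition~\ref{isequivtransport} to conclude $i\eqsort j$; that detour through transport is essential. With these two fixes your argument matches the paper's.
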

\begin{proof}
We will use the notation $\cong$ for $=_K$ and $=$ for $=_{=_K}$ in $\L_{K,3}$ in order to hint at the intended interpretation.
Furthermore, for $p \colon x \cong y, u \colon x \cong z, q \colon y \cong z$ we will use the notation $$\circ(p,q,u) \eqdef \tsort_{s} (p,u,q)$$ in order to understand $u$ as the composite of $p$ and $q$ (even though $q$ is the transport along $p$ of $u$ in the intended interpretation of $\tsort_s$).

First, we check that all the axioms for the $\L_{K,3}$-theory of groupoids are derivable using only the rules of $\D$:
\begin{enumerate}[(1)]

\item(Equality is an equivalence relation satisfying substitution salva veritate)

As already noted, this follows from Proposition \ref{transportprop}.

\item(Existence of identities) 

$\forall x \colon K. \exists i \colon x \cong x. \rsort (i)$ 

This follows immediately from $\Eqintro$.

\item(Functionality of composition-1)

$\forall x,y,z \colon K. \forall f \colon x \cong y. \forall g \colon y \cong z. \exists h \colon x \cong z.  \circ(f,g,h)$ 

This is a special case of Proposition \ref{isequivtransport}.

\item (Functionality of Composition-2)

$\forall x,y,z \colon K. \forall f \colon x \cong y. \forall g \colon y \cong z. \forall h,h' \colon x \cong z. 
 \circ (f,g,h) \wedge \circ (f,g,h') \rightarrow h=h'$

This is once again a special case of Proposition \ref{isequivtransport}.

\item (Right unit)

$\forall x,y \colon K. \forall i \colon x \cong x. \forall g \colon x \cong y. \rsort(i) \rightarrow \circ (i,g,g)$ 

This is exactly the $\trrule$ rule.

\item (Left unit) 

$\forall x,y \colon K. \forall i \colon y \cong y. \forall f \colon x \cong y. \rsort(i) \rightarrow \circ(f,i,f)$ 

This follows by a straightforward application of the $\Jrule$ rule.

\item(Uniqueness of identity)

$\forall x \colon K. \forall i,j \colon x \cong x. \rsort(i) \wedge \rsort(j) \rightarrow i=j$ 

By Proposition \ref{isequivtransport}.(2) it suffices to show 
\begin{equation}
\forall x \colon K. \forall i, j \colon x \cong x. \rsort(i) \wedge \rsort (j) \rightarrow \circ(i,j,j) \wedge \circ(i,j,i)
\end{equation}
But this follows from the left and right unit axioms above.

\item(Associativity)
\[
\begin{split}
&\forall x,y,z,w \colon K. \forall f \colon x \cong y. \forall g \colon y \cong z. \forall h \colon z \cong w. \forall i \colon x \cong z. \forall j \colon x \cong w. \\ &\forall k \colon y \cong w. \circ(f,g,i) \wedge \circ(i,h,j) \wedge \circ (g,h,k) \rightarrow \circ (f,k,j)
\end{split}
\]

By the $\Jrule$ rule it suffices to prove 
\[
\circ(\rsort(f),g,g) \wedge \circ(g,\rsort(h),g) \wedge \circ (g,\rsort(h),g) \rightarrow \circ (\rsort(f),g,g)
\]
which follows immediately from the (already proven) left and right unit axioms.

\item (Every arrow is an isomorphism)

\newcommand{\isiso}[1]{\mathtt{isiso}(#1)}

$\forall x,y \colon K. \forall p \colon x \cong y. \exists q \colon y \cong x. \exists u \colon x \cong x. \exists v \colon y \cong y. \circ(p,q,\rsort(u)) \wedge \circ (q,p, \rsort(t))$

Let us abbreviate the formula that begins with the existential quantifier as $\isiso{p}$. Then we have:
\[
\isiso{p}[x,x,q'] \equiv \exists q'', u' \colon x \cong x. \exists v' \colon x \cong x. \tsort_s(\rsort(u'),q'',q') \wedge  \tsort_s(\rsort(v'), q', q'')
\]
By the $\Jrule$ rule it now suffices to show that $\isiso{p}[x,x,q']$ is derivable. Indeed, we have:
\[
\inferrule
{
\rsort(q) \rightarrow \exists u' \colon x \cong x. \exists v' \colon y \cong y. \rsort(u') \wedge \rsort(v')
}
{
\inferrule
{
\rsort(q) \rightarrow \exists u',q'' \colon x \cong x. \exists v' \colon y \cong y.\rsort(v') \wedge \tsort_s (\rsort(u'),\rsort(q''),q)
}
{
\inferrule
{
\exists q'', u' \colon x \cong x. \exists v' \colon x \cong x. \tsort_s(\rsort(u'),q'',q') \wedge  \tsort_s(\rsort(v'), q', q'')
}
{
\isiso{p}[x,x,q']
}
}
\quad \trrule, \ref{isequivtransport}
}
\quad \trrule, \Eqintro, \ref{isequivtransport}
\]

\end{enumerate}

Conversely, we must show that the rules $\Eqintro, \trrule,\Jrule$ of $\Diso$ are derivable from the $\L_{K,3}$-theory of groupoids over $\DFOLDS$. But note that the axioms $\Eqintro$ and $\trrule$ are direct translations of axioms of the theory of groupoids. And the $\Jrule$ rule follows from the well-known result of Makkai in \cite{MFOLDS} that the FOLDS $\Lcat$-formulas are precisely the formulas that are invariant under isomorphism of objects in a category.
\end{proof}

We expect the higher analogues of Proposition \ref{groupoidprop} to also be true.
 In particular, we expect that the empty $\FOLiso$-theory over the signature $\repletion{\L_{K,\infty}}$ associated to the $h$-signature with only one object $K$ of $h$-level $\infty$ to be logically equivalent to a (suitable) theory of $\infty$-groupoids.
Making this a precise statement and proving it is left for future work. 
But even as things stand, $\Diso$ allows us to formulate a new \emph{definition} of an $\infty$-groupoid: it is a model of the empty $\FOLiso$-theory over $\repletion{\L_{K,\infty}}$.




\section{Examples and Applications}\label{nlogExamples}

FOLDS was invented as a systematic way of avoiding the use of equalities that are irrelevant for the structures of interest, e.g. equality between objects when we care about categories only up to equivalence. 
On the other hand, $\FOLiso$ represents a partial reversal of this idea, since we are re-introducing equalities as logical sorts with a fixed interpretation, together with transport function(al relation)s and reflexivities. 
In this final section we examine applications and interesting examples that indicate that this partial reversal is useful in the setting of the Univalent Foundations.

Let $\Lprecat$ be the $h$-signature described in Example \ref{Lprecateg}.
The $\Lprecat$-\emph{theory of precategories} $\mathbb{T}_{\text{precat}}$ consists of the axioms laid out in Example \ref{Tcat}.
A homotopy $\Lprecat$-structure thus consists of the following data:
\begin{itemize}
\item A type $O \colon \mathcal{U}$
\item A dependent function $A \colon O \rightarrow O \rightarrow \textbf{Set}_{\mathcal{U}}$
\item A dependent function $I \colon \underset{x\colon O}{\Pi} A(x, x) \rightarrow \textbf{Prop}_{\mathcal{U}}$
\item A dependent function
$\circ \colon \underset{x,y,z \colon O}{\Pi} A(x, y) \rightarrow A(y,z) \rightarrow A(x, z) \rightarrow \textbf{Prop}_{\mathcal{U}}$
\end{itemize}


We can now translate the axioms of $\Tprecat$ into types in HoTT for an arbitrary model $\M$ of $\Tcat$. We will list them in order, writing $=$ for the identity type on $A^\M$ and omitting $\M$ from superscripts for readability:
\begin{multicols}{2}
\begin{enumerate}
\item[($T_1$)] $\underset{x \colon O}{\Pi} \truncated{\underset{i \colon A (x,x)}{\Sigma} I(i,x)}$
\item[($T_2$)] $ \underset{\begin{subarray}{1} x, y, z \colon O \\ f \colon A(x,y) \\ g \colon A(y,z) \end{subarray}}{\Pi} || \underset{h \colon A(x,z)}{\Sigma} \circ(f,g,h,x,y,z)||$  
\item[($T_3$)] \hspace{-0.5cm} $\underset{\begin{subarray}{1}  \hspace{0.8cm} x,y,z \colon O \\ \hspace{0.8cm} f \colon A(x,y) \\ \hspace{0.8cm} g \colon A(y,z) \\ \hspace{0.5cm} h,h' \colon A(x,z) \\ \tau_1 \colon \circ(f,g,h,x,y,z) \\ \tau_2 \colon \circ(f,g,h',x,y,z)\end{subarray}}{\Pi} \:  \hspace{-0.3cm} h=h'$
\item[($T_4$)] $ \underset{\begin{subarray}{1} x, y, z, w \colon O \\ f \colon A(x,y) \\ g \colon A(y,z) \\ h \colon A(z,w) \\ i \colon A(x,z) \\ j \colon A(x,w) \\ k \colon A(y,w)  \end{subarray}}{\Pi} \: \underset{\begin{subarray}{1} \tau_1 \colon \circ(f,g,i,x,y,z) \\ \tau_2 \colon \circ(i,h,j,x,z,w) \\ \tau_3 \colon \circ(g,h,k,y,z,w)  \end{subarray}}{\Pi} \circ(f,k,j,x,y,w)$
\end{enumerate}
\begin{enumerate}
\item[($T_5$)] $\underset{\begin{subarray}{1} x \colon O \\ i,j \colon A(x,x) \\ \phi \colon I(i,x) \\ \psi \colon I(j,x) \end{subarray}}{\Pi} i=j$
\item[($T_6$)] $\underset{\begin{subarray}{1} x,y \colon O \\ i \colon A(x,x) \\ g \colon A(x,y) \\ \phi \colon I(i,x) \end{subarray}}{\Pi} \circ(i,g,g,x,x,y) $
\item[($T_{7}$)]  $\underset{\begin{subarray}{1} x,y \colon O \\ i \colon A(y,y) \\ f \colon A(x,y) \\ \phi \colon I(i,y) \end{subarray}}{\Pi} \circ(f,i,f,x,y,y) $
\end{enumerate}
\end{multicols}

We thus obtain:
\[
\textbf{Mod}(\mathbb{T}_{\text{precat}}) \equiv \sum_{\substack{O \colon \mathcal{U} \\ A \colon O \rightarrow O \rightarrow \textbf{Set}_{\mathcal{U}} \\  I \colon \underset{x\colon O}{\Pi} A(x, x) \rightarrow \textbf{Prop}_{\mathcal{U}} \\  \circ \colon \underset{x,y,z \colon C}{\Pi} A(x, y) \rightarrow A(y,z) \rightarrow A(x, z) \rightarrow \textbf{Prop}_{\mathcal{U}}}} T_1 \times T_2 \times T_3 \times T_4 \times T_5 \times T_6 \times T_7 
\]
On the other hand in \cite{HTT} a \emph{precategory} is defined by the following data:
\begin{enumerate}[(1)]
\item A type $C \colon \mathcal{U}$  
\item A dependent function $\text{Hom}_{\C} \colon C \rightarrow C \rightarrow \textbf{Set}_{\mathcal{U}}$ 
\item 
A dependent function 
$
1 \colon \underset{a \colon C}{\Pi} \text{Hom}_{\C} (a, a)
$
\item A dependent function
$
\circ \colon \underset{a,b,c \colon C}{\Pi} \text{Hom}_{\C} (a, b) \rightarrow \text{Hom}_{\C} (b, c) \rightarrow \text{Hom}_{\C} (a, c)
$ 
\item A dependent function
$
\mathtt{assoc} \colon \underset{a,b,c,d \colon C}{\Pi} \: \underset{\begin{subarray}{c} 
f \colon \text{Hom}_{\C} (a, b) \\
g \colon \text{Hom}_{\C} (b, c) \\
h \colon \text{Hom}_{\C} (c, d)
\end{subarray}}{\Pi} h \circ (g \circ f) = (h \circ g) \circ f
$
\item A dependent function
$
\mathtt{ident} \colon \underset{a,b \colon C}{\Pi} \underset{f \colon \text{Hom}_{\C} (a, b)}{\Pi} (f \circ 1_a = f) \times (1_b \circ f = f)
$
\end{enumerate}

We can now show that precategories are ``$\FOLiso$-elementary'' in the sense  that they are axiomatizable, up to equivalence, by $\Tcat$ over the $\FOLiso$-signature $\Lcat$.
In what follows below we will be making free use of the HoTT version of the Axiom of Unique Choice (AUC) (\cite{HTT}, Corollary 3.9.2). 


\begin{prop}\label{modtoprecat}
$\textbf{\emph{PreCat}} \simeq \textbf{\emph{Mod}}(\mathbb{T}_{\text{\emph{precat}}})$
\end{prop}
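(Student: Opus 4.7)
The plan is to construct quasi-inverse functions between $\textbf{PreCat}$ and $\textbf{Mod}(\mathbb{T}_{\text{precat}})$ by translating between the ``functional'' presentation (composition and identity as actual functions) and the ``relational'' presentation (composition and identity as mere propositional relations satisfying functionality axioms). The key ingredient in going from the relational to the functional presentation is the Axiom of Unique Choice (AUC), which is available in HoTT.

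First I would define the forward map $F \colon \textbf{PreCat} \to \textbf{Mod}(\mathbb{T}_{\text{precat}})$. Given $\C = (C, \mathrm{Hom}_\C, 1, \circ_\C, \mathtt{assoc}, \mathtt{ident})$, set
\[
O^\M := C, \quad A^\M(x,y) := \mathrm{Hom}_\C(x,y),
\]
\[
I^\M(i,x) := \|\Idtype_{\mathrm{Hom}_\C(x,x)}(i, 1_x)\|, \quad \circ^\M(f,g,h,x,y,z) := \|\Idtype_{\mathrm{Hom}_\C(x,z)}(g \circ_\C f, h)\|.
\]
Since $\mathrm{Hom}_\C(x,y)$ is an $h$-set, the identity types are already mere propositions so the truncation is redundant but harmless. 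Verification of $T_1$--$T_7$ is then routine: $T_1$ is witnessed by $|(1_x, \mathtt{refl})|$; $T_2$ by $|(g \circ_\C f, \mathtt{refl})|$; $T_3$ follows from transitivity of equality; $T_4$ is $\mathtt{assoc}$; $T_5$--$T_7$ follow from $\mathtt{ident}$ and transitivity of equality.

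Next I would define the backward map $G \colon \textbf{Mod}(\mathbb{T}_{\text{precat}}) \to \textbf{PreCat}$. Given $\M$, set $C := O^\M$ and $\mathrm{Hom}_\C := A^\M$. To extract the identity function, observe that axioms $T_1$ and $T_5$ say that for every $x \colon O^\M$ there \emph{merely exists} an $i \colon A^\M(x,x)$ with $I^\M(i,x)$, and that any two such are equal (as elements of the $h$-set $A^\M(x,x)$). Hence by AUC there is a function $1 \colon \prod_{x \colon O^\M} A^\M(x,x)$ together with a proof $\mathtt{isid} \colon \prod_x I^\M(1_x, x)$. Similarly, $T_2$ and $T_3$ express that $\circ^\M$ is a total functional relation, so AUC yields a composition function $\circ_\C \colon \prod_{x,y,z} A^\M(x,y) \to A^\M(y,z) \to A^\M(x,z)$ together with a proof $\mathtt{iscomp} \colon \prod_{x,y,z,f,g} \circ^\M(f, g, g \circ_\C f, x, y, z)$. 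Associativity follows from $T_4$ applied with $i := g \circ_\C f$, $j := h \circ_\C (g \circ_\C f)$, $k := h \circ_\C g$ using $\mathtt{iscomp}$ and then $T_3$ to identify this with $(h \circ_\C g) \circ_\C f$. The unit laws follow similarly from $T_6$, $T_7$, $\mathtt{isid}$, $\mathtt{iscomp}$ and $T_3$.

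Finally I would show $F$ and $G$ are mutually quasi-inverse. For $G \circ F \simeq \mathrm{id}_{\textbf{PreCat}}$: given $\C$, the identity function extracted by AUC is propositionally equal to the original $1$ (both satisfy the same uniquely-characterized property), and likewise for composition; the remaining higher data ($\mathtt{assoc}$, $\mathtt{ident}$) lives in mere propositions so equality is automatic. For $F \circ G \simeq \mathrm{id}_{\textbf{Mod}(\mathbb{T}_{\text{precat}})}$: the sorts $O, A$ are preserved on the nose; the relations $I$ and $\circ$ in $F(G(\M))$ are equivalent (and hence, being families of mere propositions, equal by function extensionality and propositional extensionality, which hold in HoTT) to the original $I^\M, \circ^\M$, because a propositional relation on an $h$-set that is functional and total is uniquely determined by its associated function. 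The axioms $T_1$--$T_7$ live in mere propositions and contribute nothing beyond the data.

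The main obstacle will be the last step: carefully unpacking the $\Sigma$-types defining $\textbf{Mod}(\mathbb{T}_{\text{precat}})$ and $\textbf{PreCat}$ and assembling the component-wise equalities into an equivalence of total $\Sigma$-types. This is conceptually straightforward but notationally heavy; the crucial technical lemma is that for a total functional propositional relation $R \colon X \to Y \to \mathrm{Prop}_\U$ (with $Y$ an $h$-set), $R(x,y)$ is equivalent to $\Idtype_Y(f_R(x), y)$ where $f_R$ is the function produced by AUC, which gives the required bijection between relational and functional presentations.
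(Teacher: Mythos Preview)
Your proposal is correct and takes essentially the same approach as the paper: both directions are constructed exactly as you describe (relational-to-functional via AUC, functional-to-relational via $I(i,x) := (i = 1_x)$ and $\circ(f,g,h) := (g \circ f = h)$), and the paper likewise leaves the quasi-inverse verification as a straightforward check. If anything, your treatment of the round-trip identifications is more explicit than the paper's, which simply asserts that $p$ and $q$ are quasi-inverses.
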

\begin{proof}
The proof boils down to proving, using AUC, that an axiomatization of a category in terms of a relation of composition is equivalent to the usual axiomatization in terms of an operation of composition.
First we define a function
\begin{equation}
p \colon \textbf{Mod}(\mathbb{T}_{\text{precat}}) \rightarrow \textbf{PreCat}
\end{equation}
So let $C  \colon \textbf{Mod}(\mathbb{T}_{\text{precat}})$ and write $t_i$ for the inhabitants of each axiom $T_i$ that is part of the data of $C$. 
We need to provide the data for conditions (1)-(6) in the definition of precategories. 
We are given $O$ and $A$ and those immediately take care of conditions (1) and (2).
 For condition (3) we first observe that $\underset{i \colon A (x,x)}{\Sigma} I(i,x)$ is a mere proposition for any $x \colon O$. For suppose that $( i,\phi )$ and $( j, \psi )$ are two terms of type $\underset{i \colon A (x,x)}{\Sigma} I(i,x,x)$. 
To show that $ ( i,\phi ) = ( j,\psi )$ it suffices to show that there is $p \colon i=j$ and that $p_* (\phi) = \psi$. By applying $t_5$ to the data $ \langle x,i,j,\phi,\psi \rangle$ we get a proof that $i=j$, i.e. a term $p \colon i=j$. Clearly, since $I(i,x,x)$ and $I(j,x,x)$ are mere propositions, we also get that $\psi = p_* (\phi)$ and therefore we get  that $(i,\phi ) = (j,\psi)$ and therefore that the type $\underset{i \colon A (x,x)}{\Sigma} I(i,x,x)$ is a mere proposition. By AUC and $t_1$ we get a term 
\begin{equation}
u \colon \underset{x \colon O}{\Pi} \:\: \underset{i \colon A (x,x)}{\Sigma} I(i,x,x)
\end{equation}
Thus we can define, for each $x \colon O$, the following term
\begin{equation}
1_x =_{\text{def}} \TTproj_1 (u_x) \colon A(x,x)
\end{equation}
and thus we obtain a term
\begin{equation}
1^C =_{\text{def}} \lambda x. 1_x \colon \underset{x \colon O}{\Pi} A(x,x)
\end{equation}
as required by condition (3).
Condition (4) follows similarly and we omit the details. 
For condition (5), let $x,y,z,w \colon O$ and $f \colon A(x,y), g\colon A(y,z)$ and $h \colon A(z,w)$. Now let 
\begin{equation}
r =_{\text{def}} (t_4)_{x,y,z,w,f,g,h,g \circ^C f, h \circ^C (g \circ^C f), h \circ^Cg} \colon \underset{\begin{subarray}{1} \tau_1 \colon \circ(f,g,g \circ^C f,x,y,z) \\ \tau_2 \colon \circ(g \circ^C f,h,h \circ^C (g \circ^C f),x,z,w) \\ \tau_3 \colon \circ(g,h,h \circ^C g,y,z,w)  \end{subarray}}{\Pi} \hspace{-1.1cm}\circ(f,h \circ^C g,h \circ^C (g \circ^C f), x,y,w)
\end{equation}
We can then define
\begin{align*}
p_1 &=_{\text{def}} \TTproj_2 (c_{x,y,z,f,g}) \\
p_2 &=_{\text{def}} \TTproj_2 (c_{x,z,w,g \circ^Cf,h}) \\
p_3 &=_{\text{def}} \TTproj_2 (c_{y,z,w,g,h}) \\
\end{align*}
and thus $r_{p_1,p_2,p_3} \colon \circ(f,h \circ^C g,h \circ^C (g \circ^C f), x,y,w)$. But by definition we have a term $\pi \colon \circ(f,h \circ^C g,(h \circ^C g) \circ^C f, x,y,w)$ and therefore we get a term 
\begin{equation}
(t_{11})_{x,y,w,f,f,h \circ^C g,h \circ^C g, \mathtt{refl}_f, \mathtt{refl}_{h \circ^C g}, h \circ^C (g \circ^C f), (h \circ^C g) \circ^C f,  r_{p_1, p_2, p_3}, \pi} \colon h \circ^C (g \circ^C f)  = (h \circ^C g) \circ^C f
\end{equation}
Thus we can define
\begin{equation}
\mathtt{assoc}^C_{x,y,z,w,f,g,h} =_{\text{def}} (t_{4})_{x,y,w,f,f,h \circ^C g,h \circ^C g, \mathtt{refl}_f, \mathtt{refl}_{h \circ^C g}, h \circ^C (g \circ^C f), (h \circ^C g) \circ^C f,  r_{p_1, p_2, p_3}, \pi}
\end{equation}
and this gives us the section 
\begin{equation}
\mathtt{assoc}^C \colon  \underset{a,b,c,d \colon C}{\Pi} \underset{\begin{subarray}{c} 
f \colon \text{Hom}_{\C} (a, b) \\
g \colon \text{Hom}_{\C} (b, c) \\
h \colon \text{Hom}_{\C} (c, d)
\end{subarray}}{\Pi} h \circ (g \circ f) = (h \circ g) \circ f
\end{equation}
as required by condition (5).  
Condition (6) follows similarly.
So we can now write $p(C)$ for the precategory given by the data 
\begin{equation}
(O,A, 1^C, \circ^C, \mathtt{assoc}^C, \mathtt{idl}^C, \mathtt{idr}^C)
\end{equation}

Conversely, we need to define a function
\begin{equation}
q \colon \textbf{PreCat} \rightarrow \textbf{Mod}(\mathbb{T}_{\text{precat}})
\end{equation}
So let $C$ be a precategory given by the data
\begin{equation}
(C,\text{Hom}, 1, \circ, \mathtt{assoc}, \mathtt{idl}, \mathtt{idr})
\end{equation}
Given $1$ we know that for each $x \colon C$, $\text{Hom}(x,x)$ is inhabited since $1_x \colon \text{Hom}(x,x)$. Thus we can define
\begin{equation}
I_C =_{\text{def}} \lambda x.(\lambda f.(f=1_x)) \colon \underset{x \colon C}{\Pi} \text{Hom}(x,x) \rightarrow \textbf{Prop}_{\mathcal{U}}
\end{equation}
where we know that $f=1_x$ is a mere proposition since $\text{Hom}(x,x)$ is an $h$-set.
Exactly analogously, since $\circ$ ensures that each type $\text{Hom}(x,z)$ will be inhabited given $f \colon \text{Hom}(x,y)$ and $g \colon \text{Hom}(y,z)$ we define 
\begin{equation}
\circ_C \colon \underset{x,y,z \colon C}{\Pi} \text{Hom}(x,y) \rightarrow \text{Hom}(y,z) \rightarrow \text{Hom}(x,z) \rightarrow \textbf{Prop}_{\mathcal{U}}
\end{equation}
The verification of the axioms for this data is then entirely straightforward and we omit the details.
So we can now write $q(C)$ for the $\mathbb{T}_{\text{precat}}$-model given by the data
\begin{equation}
(C,\text{Hom},I_C,\circ_C, t^C_1, t^C_2, t^C_3, t^C_4, t^C_5, t^C_6, t^C_7)
\end{equation}
It is straightforward to check that $p$ and $q$ are quasi-inverses which gives us the required equivalence.
\end{proof}



\def\Lstrcat{\L_{\text{strcat}}}
\def\Tstrcat{\T_{\text{strcat}}}
\def\Tstrcate{\T_{\text{\emph{strcat}}}}
\def\IdOM{\text{Id}_{O^\M}}

A \emph{strict category} (\cite{HTT}, Definition 9.6.1) is a precategory in which the type of objects is an $h$-set. 
We write \textbf{StrCat} for the type of strict categories.
Let $\Lstrcat$ be the signature whose category part is the same as $\Lcat$ but with $h(O)=2$.
Let $\T_{\text{strcat}}$ be the $\Lstrcat$-theory that contains the same axioms as $\Tcat$.
We now immediately obtain the following.

\begin{cor}\label{strcatprop}
$\textbf{\emph{StrCat}} \simeq \textbf{\emph{Mod}}(\Tstrcate)$
\end{cor}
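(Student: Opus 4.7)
The plan is to reduce this corollary directly to Proposition \ref{modtoprecat} by observing that the passage from $\Lprecat$ to $\Lstrcat$ amounts on both sides of the equivalence to restricting the type of objects to be an $h$-set.

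First I would unfold the definition of $\HStruc{\Lstrcat}$. Since $\Lstrcat$ differs from $\Lprecat$ only in that $h(O) = 2$ rather than $h(O) = \infty$, Definition \ref{HLstructure} yields
\[
\HStruc{\Lstrcat} \equiv \Sigmatype (O \colon \SetU) (A \colon O \to O \to \SetU) (I \colon \Pitype_{x \colon O} A(x,x) \to \PropU) \, (\circ \colon \dots)
\]
which is obtained from $\HStruc{\Lprecat}$ simply by replacing $O \colon \U$ with $O \colon \SetU$. Since the axioms of $\Tstrcat$ are the same expressions as those of $\Tprecat$, their interpretations $T_1^\M, \dots, T_7^\M$ are also the same once $\M$ is given, and hence
\[
\Mod{\Tstrcat} \simeq \Sigmatype_{\M \colon \HStruc{\Lstrcat}} T_1^\M \times \dots \times T_7^\M.
\]

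Next I would observe that the underlying type of $O \colon \SetU$ is equivalent to a pair $(O \colon \U, p \colon \mathtt{isSet}(O))$, and $\mathtt{isSet}(O)$ is a mere proposition. Therefore
\[
\Mod{\Tstrcat} \simeq \Sigmatype_{\M \colon \Mod{\Tprecat}} \mathtt{isSet}(O^\M).
\]
On the other side, \textbf{StrCat} is by definition $\Sigmatype_{\C \colon \textbf{PreCat}} \mathtt{isSet}(\mathrm{Ob}\,\C)$.

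Finally, I would apply Proposition \ref{modtoprecat} fibrewise. The equivalence $q \colon \textbf{PreCat} \simeq \Mod{\Tprecat}$ constructed there preserves the underlying type of objects (on objects $q$ leaves $C$ unchanged and $p$ returns $O$ as the first projection of $\M$), so the predicate $\mathtt{isSet}(-)$ on the object type transports across it. By the standard fact that a $\Sigmatype$-type of equivalent base types with corresponding mere propositional fibres is equivalent, we obtain the desired
\[
\textbf{StrCat} \equiv \Sigmatype_{\C \colon \textbf{PreCat}} \mathtt{isSet}(\mathrm{Ob}\,\C) \simeq \Sigmatype_{\M \colon \Mod{\Tprecat}} \mathtt{isSet}(O^\M) \simeq \Mod{\Tstrcat}.
\]
The only non-routine step is checking that $p$ and $q$ from Proposition \ref{modtoprecat} genuinely preserve the object type on the nose (or at least up to an equivalence compatible with $\mathtt{isSet}$), but this is immediate from their construction since neither modifies the underlying carrier. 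Hence the whole proof is essentially a one-line reduction to the previous proposition.
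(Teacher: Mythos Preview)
Your proposal is correct and matches the paper's approach exactly: the paper treats the corollary as immediate from Proposition \ref{modtoprecat} (stating only ``We now immediately obtain the following''), and you have simply spelled out the routine details of that reduction. The key observation you make---that both sides are obtained by restricting along the mere-propositional predicate $\mathtt{isSet}$ on the object type, which is preserved by the equivalence of Proposition \ref{modtoprecat}---is precisely the intended argument.
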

A \emph{univalent category} (\cite{HTT}, Definition 9.1.6) is a precategory satisfying the following additional condition:
\begin{enumerate}
\item[(7)] $\mathtt{cat} \colon \underset{a,b \colon C}{\Pi} \mathtt{isequiv} ( \mathtt{idtoiso}_{a,b})$
\end{enumerate}
We write \textbf{Unicat} for the type of univalent categories.
Now let $\Lucat$ be the following (associated) $\FOLiso$-signature
\[
\xymatrix{
1 &\circ \ar@/_5pt/[rrd]^{t_0} \ar@/_10pt/[rrd]_{t_1} \ar@/_25pt/[rrd]_{t_2} & I \ar@/_5pt/[rd]^{i}  &\eqsort_A \ar@/^/[d] \ar@/_/[d] &U \ar@/^/[ld]_{u_1} \ar@/_/[rd]_{u_2} &=^2_O \ar@/^/[d]^{s^O_2} \ar@/_/[d]_{t^O_2}  &\rsort_O \ar@/^20pt/[ld]^{\rho^O} \\
2& & & A \ar@/_/[d]_{d_0} \ar@/^/[d]^{d_1} & &=_O^1 \ar@/^/[lld]^{s_1^O} \ar@/_/[lld]_{t^O_1} \\
 3 & & & O \\
}
\]
subject to all the same relations as $\Lcat$ as well as the additional relation $t_1^O u_1 = s^O_1 u_2$. 
We can then define $\T_{\text{ucat}}$ as the $\Lucat$-theory given by the axioms of $\T_{\text{cat}}$ together with the following extra axioms:
\begin{enumerate}[(1)]
\setcounter{enumi}{7}
\item $\forall x,y \colon O. \forall f \colon A(x,y). \text{Iso}(f) \rightarrow (\exists ! p \colon x=_O^1y. U(f,p))$
\item $\forall x,y \colon O. \forall f \colon A(x,y). \forall p \colon x=_O^1y. U(f,p) \rightarrow \text{Iso}(f)$
\item $\forall x \colon O. \forall f \colon A(x,x). \forall p \colon x=_O^1x.(I(f,x) \wedge U(f,p) \rightarrow r_O^1(p,x))$
\item $\forall x,y \colon O. \forall f \colon A(x,y). \forall p,q \colon x=_O^1 y. (U(f,p) \wedge U(f,q) \rightarrow p=_O^2q)$
\end{enumerate}
where we have used the abbreviations
\[
\text{Iso}(f) \eqdef \exists g \colon A(x,y) \exists h_1 \colon A(x,x) \exists h_2 \colon A(y,y).
\circ(f,g,h_1) \wedge \circ(g,f,h_2) \wedge I(h_1) \wedge I(h_2) 
\]
and
\[
\exists ! p \colon x=_O^1y. U(f,p) \eqdef \exists p \colon x=_O^1y.(U(f,p) \wedge (\forall q \colon x=_O^1y.(U(f,q) \rightarrow p=_O^2q)))
\]
Thus, axioms (8)-(10) express that $U$ is a bijective relation between isomorphisms and paths that sends identity to reflexivity and axiom (11) expresses that $U$ a functional relation.
We can now show that univalent categories are also ``$\FOLiso$-elementary''.

\begin{prop}\label{unicatprop}
$\textbf{\emph{UniCat}} \simeq \textbf{\emph{Mod}}(\T_{\text{\emph{ucat}}})$
\end{prop}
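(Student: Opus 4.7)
The plan is to follow the same general strategy as Proposition \ref{modtoprecat}, building quasi-inverse maps between $\textbf{UniCat}$ and $\textbf{Mod}(\T_{\text{ucat}})$ that reduce to the precategory equivalence on the underlying data and account for axioms (8)--(11) by exhibiting $U$ as (the graph of) the map $\mathtt{idtoiso}$. More precisely, the key observation driving the proof is that, given a $\T_{\text{ucat}}$-model $\M$, axioms (8)--(11) collectively force $U^\M(f,p)$ to be (propositionally equivalent to) $f = \mathtt{idtoiso}_{x,y}(p)$, and conversely this equation defines the unique $U$-relation on a univalent category that satisfies (8)--(11).

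First I would define $p\colon \textbf{Mod}(\T_{\text{ucat}}) \to \textbf{UniCat}$. Given $\M$, apply Proposition \ref{modtoprecat} (which transports through the obvious forgetful functor $\textbf{Mod}(\T_{\text{ucat}}) \to \textbf{Mod}(\T_{\text{precat}})$ induced by restricting along the evident $\FOLiso$-morphism $\Lprecat \to \Lucat$) to obtain a precategory $C_\M$. To produce univalence, I would use axiom (8) together with AUC: since the type $\Sigma (p \colon x =^1_O y).\, U^\M(f,p)$ is a mere proposition for isomorphisms $f$ (this is exactly the ``unique existence'' clause of (8), combined with the fact that $U$ is proposition-valued because $h(U) = 1$), AUC gives a function $\mathtt{isotoid}^\M \colon \mathrm{Iso}(x,y) \to (x =^1_O y)$. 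Axioms (10) and (11) then ensure that $\mathtt{idtoiso}^{C_\M} \circ \mathtt{isotoid}^\M = \mathrm{id}$ on isomorphisms (one uses the $\Jrule$-style reasoning internal to HoTT: $\mathtt{idtoiso}(\mathtt{isotoid}(f))$ and $f$ both satisfy $U^\M(-, \mathtt{isotoid}^\M(f))$, so by axiom (11) and the fact that $U$ picks out $\mathtt{idtoiso}$ on reflexivities (axiom (10) and $\Eqintro$) the two agree). Axiom (9) then handles the opposite composite, so $\mathtt{idtoiso}$ is an equivalence, making $C_\M$ univalent.

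Conversely, I would define $q\colon \textbf{UniCat} \to \textbf{Mod}(\T_{\text{ucat}})$ by taking a univalent category $C$, applying the converse direction of Proposition \ref{modtoprecat} to get a $\T_{\text{precat}}$-model over $\Lprecat$, and extending the interpretation to $\Lucat$ by setting
\[
U^C(x,y,f,p) \eqdef \bigl(\mathtt{idtoiso}^C_{x,y}(p) = f\bigr)
\]
which is a mere proposition since Hom-sets are sets. Verifying axioms (8)--(11) is then essentially repackaging univalence: (8) is the statement that $\mathtt{idtoiso}$ is an equivalence (``unique existence'' being the contractibility of fibres of an equivalence); (9) follows since $\mathtt{idtoiso}(p)$ is always an iso; (10) follows from $\mathtt{idtoiso}(\refl) = 1$; (11) is a restatement that $\mathtt{idtoiso}$ is injective on paths (which again follows from being an equivalence).

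The hard part, and the main obstacle, will be showing that $p$ and $q$ are quasi-inverses. The direction $q \circ p \simeq \mathrm{id}$ requires proving that $U^\M$ coincides (propositionally, fibrewise) with the relation $f = \mathtt{idtoiso}^{C_\M}(p)$; this is where axioms (8)--(11) are used most delicately, essentially a proof that any two ``graphs'' of the same equivalence must agree pointwise, invoking the fact that $U^\M(f,p)$ is a mere proposition. The direction $p \circ q \simeq \mathrm{id}$ reduces to Proposition \ref{modtoprecat} together with the observation that both extensions of a univalent category to a $\T_{\text{ucat}}$-model through the canonical $U$ agree. Both computations use propositional extensionality for mere propositions and are best handled by first reducing, using propositional univalence, to the problem of showing logical equivalence of the relevant mere propositions.
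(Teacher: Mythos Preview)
Your overall strategy matches the paper's: reduce to the precategory equivalence of Proposition~\ref{modtoprecat} and then handle the extra $U$-data via the axioms (8)--(11). The paper organizes the second half differently, however, and more efficiently. Rather than building explicit quasi-inverses and checking both round-trips, the paper first uses AUC to repackage $\textbf{Mod}(\T_{\text{ucat}})$ as $\Sigma_{C : \textbf{PreCat}}\,\Sigma_{u : \Pi_{x,y}\,\mathtt{Id}(x,y)\to\mathtt{Iso}(x,y)}\,\Pi_{x,y}\,\mathtt{isequiv}(u_{x,y})$, then considers the obvious map from $\textbf{UniCat}$ sending $(D,\mathtt{univ})$ to $(D,\mathtt{idtoiso},\mathtt{univ})$ and shows its homotopy fibers are contractible. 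The contractibility reduces, via function extensionality and path induction on $p$, to the single identity $u_{x,x}(\refl_x)=\mathtt{idtoiso}_{x,x}(\refl_x)=1_x$, which follows from the axioms. This avoids your harder round-trip $q\circ p \simeq \mathrm{id}$, where you must identify $U^\M$ pointwise with the graph of $\mathtt{idtoiso}$.

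There is also a concrete slip in your argument. When you write that ``$\mathtt{idtoiso}(\mathtt{isotoid}(f))$ and $f$ both satisfy $U^\M(-,\mathtt{isotoid}^\M(f))$, so by axiom (11) \dots\ the two agree,'' you are invoking (11) for uniqueness of the \emph{arrow} given the path, but (11) states uniqueness of the \emph{path} given the arrow (and is in fact already contained in the $\exists!$ of (8)). Similarly, axiom (9) (``$U(f,p)\Rightarrow\mathrm{Iso}(f)$'') does not by itself handle the composite $\mathtt{isotoid}\circ\mathtt{idtoiso}$; that direction is the one that goes through cleanly by path induction using (8) and (10). The paper's path-induction framing sidesteps the need to argue arrow-uniqueness directly: once one has an equivalence $u$ in hand, identifying it with $\mathtt{idtoiso}$ requires only checking the $\refl$ case.
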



\def\Tucat{\mathbb{T}_{\text{ucat}}}

\begin{proof}
From Proposition \ref{modtoprecat} we can assume that the data for a model of $\Tucat$ is given by the same data as that of for a precategory, together with the interpretation of $U$. Given AUC, for any given $\Tucat$-model $\M$ we can extract from $U^\M$ a section
\begin{equation}
u^\M \colon \underset{x,y \colon O^\M}{\Pi} \mathtt{Id}_{O^\M} (x,y) \rightarrow \mathtt{Iso}^\M(x,y)
\end{equation}
where 
\begin{equation}
\mathtt{Iso}^\M(x,y) \equiv \underset{f \colon A^\M(x,y)}{\Pi} \mathtt{isiso}(f)
\end{equation}
such that
\begin{equation}
\pi \colon \underset{x,y \colon O^\M}{\Pi} \mathtt{isequiv}(u_{x,y})
\end{equation}
Thus we get
\begin{equation}
\textbf{Mod}(\Tucat) \simeq \underset{\begin{subarray}{1} \hspace{1.3cm} C \colon \textbf{Precat} \\ u \colon \underset{x,y \colon O^C}{\Pi} \mathtt{Id}_{O^C} (x,y) \rightarrow \mathtt{Iso}^C(x,y) \end{subarray}}{\Sigma} \: \underset{x,y \colon O^C}{\Pi} \mathtt{isequiv}(u_{x,y})
\end{equation}
We can now take \textbf{Unicat} to be the type
\begin{equation}
\underset{C \colon \textbf{Precat}}{\Sigma} \: \underset{x,y \colon O^C}{\Pi} \mathtt{isequiv}(\mathtt{idtoiso}_{x,y})
\end{equation}
There is then a natural map $f$ from \textbf{UniCat} to $\textbf{Mod}(\Tucat)$ which sends $\langle D, \mathtt{univ} \rangle$ to $\langle D, \mathtt{idtoiso}, \mathtt{univ} \rangle$. For a given 
\begin{equation}
\langle D, u, \pi \rangle \colon \textbf{Mod}(\Tucat)
\end{equation}
the homotopy fiber of $f$ over $\langle D, u, \pi \rangle$ is given by
\begin{equation}
\mathtt{hfib}_f (\langle D, u, \pi \rangle) \equiv \underset{\langle C ,\mathtt{univ} \rangle \colon \textbf{UniCat}}{\Sigma} \langle C, \mathtt{idtoiso}, \mathtt{univ} \rangle = \langle D, u, \pi \rangle 
\end{equation}
To show that $\mathtt{hfib}_f (\langle D, u, \pi \rangle)$ is contractible it clearly suffices to show that for all $x,y \colon O$, $u_{x,y} = \mathtt{idtoiso}_{x,y}$ and by function extensionality this reduces to giving an inhabitant of
\begin{equation}
\underset{x,y,p}{\Pi} u_{x,y} (p) = \mathtt{idtoiso}_{x,y}(p)
\end{equation}
which by path induction reduces to providing an inhabitant of
\begin{equation}
\underset{x}{\Pi} u_{x,x} (\mathtt{refl}_x) = \mathtt{idtoiso}_{x,x}(\mathtt{refl}_x)
\end{equation}
But by the axioms of univalent categories and of $\Tucat$ we get that both sides of the equation are (propositionally) equal to the (unique) identity map on $x$. Thus $f$ is an equivalence and we are done.
\end{proof}


Proposition \ref{unicatprop} illustrates the kind of result that $\FOLiso$ was designed to tackle.
Namely we want to use $\FOLiso$ as a tool for answering traditional model-theoretic questions (e.g. of elementariness) 
but about structures defined on homotopy types rather than sets.
From this point of view, many future projects and questions suggest themselves. We list a few, in descending order of priority and ascending order of speculation:
\begin{enumerate}
\item Defining the categorical semantics of $\FOLiso$ (possibly in terms of Reedy-fibrant diagrams as in \cite{shulman2015reedy}). In particular, to define an initial categorical semantics for $\LTT_\L$ for each $\L$. 
\item Proving a general completeness theorem for the homotopy semantics with respect to the deductive system $\Diso$. 
\item Defining a $\FOLiso$-signature $\L_{UF}$ and an $\L_{UF}$-theory $\T_{UF}$ that can serve as an axiomatization of the universe of $\infty$-groupoids, i.e. a $\FOLiso$-axiomatization of $\infty$-groupoids.
\item Characterizing $\FOLiso$-elementary types in general, i.e. proving a Lo\'{s} Theorem for $\FOLiso$ along the lines of \cite{CK}, Theorem 4.1.12. 
\item Characterizing ``homotopy categorical'' theories, i.e. finding necessary and sufficient conditions on a $\FOLiso$-theory $\T$ such that $\Mod{\T}$ is contractible.
\end{enumerate}

Finally, we ought to make a few remarks concerning the setting in which the investigation of $\FOLiso$ can take place. Everything that we have done in this paper can be done either in (some) HoTT or in set theory. By this we mean that, in addition to defining $\FOLDS(\Ob,\Mor)$, $\hSig$, $\FOLiso$ etc., for each fixed $\FOLiso$-signature $\L$ we are able to define $\HStruc{\L}$, $\LTT_\L$ and the interpretation of $\LTT_\L$ into MLTT either in type theory or in set theory.
But as we mentioned in Remark \ref{functoriality} we should ideally like to regard $\HStruc{-}$ as an ``interpretation functor'' $\hSig \rightarrow \U$ which takes each $\L$ to the ``correct'' $\Sigma$-type in $\U$. 
In particular, having such a functor would allow us to consider ``infinitary'' constructions on $\hSig$ which we could then reflect onto $\U$.
For example, as has been outlined in \cite{TsemWeaverTTIabstract}, by defining a function $sst \colon \mathbb{N} \rightarrow \hSig$ picking out for each $n$ the signature for the $n$-truncated semi-simplicial types and then taking the (homotopy) limit over $\HStruc{sst(-)}$ in $\U$ would give us a definition of the type of semi-simplicial types.
Unfortunately it is unlikely that such an ``interpretation functor'' can be defined in the standard HoTT, due to the well-known coherence problems. 

One option would would be to move to a so-called ``two-level type theory'' (cf. \cite{2LTT, Altenkirch16, HTS}) and, e.g., define inverse categories whose relations are satisfied strictly, building on what is done in \cite{2LTT}, Definition 10. 
Our preferred option, however, is to work in a HoTT with a \emph{postulated} interpretation function, as has been announced in \cite{TsemWeaverTTIabstract}, under the name of TT+I, but which perhaps should more appropriately be called HoTT+I. We believe this is the right setting in which to develop and apply $\FOLiso$, and we believe HoTT+I to be in many ways a more correct overall formalization of the Univalent Foundations since it is a univalent type theory (in the sense that there is only one, univalent, identity type) but which can still allow us to carry out constructions that are conjectured to be impossible to carry out in standard HoTT, e.g. defining semi-simplicial types. 

We believe a HoTT along the lines of HoTT+I is the right setting in which to study $\FOLiso$ natively within the Univalent Foundations.
Such a study should be understood as a model theory within the Univalent Foundations.

\vspace{0.3cm}

\noindent\textbf{Acknowledgements.} I thank Benedikt Ahrens, Steve Awodey, Andrej Bauer, John Burgess, Thierry Coquand, Harry Crane, Nicola Gambino, Dan Grayson, Chris Kapulkin, Peter LeFanu Lumsdaine, Anders M\"{o}rtberg, Paige North, Mike Shulman, Vladimir Voevodsky and Matthew Weaver for helpful and stimulating conversations and emails.  I would also like to single out in thanks an anonymous referee for very extensive and helpful comments that significantly improved and substantially extended the scope of the paper.

\bibliographystyle{shortalphabetic}

\bibliography{nlogicrefs}

\end{document}